\theoremstyle{definition}
\newtheorem{theorem}{Theorem}[section]
\newtheorem{definition}[theorem]{Definition}
\newtheorem{claim}[theorem]{Claim}
\newtheorem{coro}[theorem]{Corollary}
\newtheorem{proposition}[theorem]{Proposition}
\newtheorem{question}[theorem]{Question}
\newtheorem{lemma}[theorem]{Lemma}
\theoremstyle{remark}
\numberwithin{equation}{section}
\newcommand{\CH}{\mathsf{CH}}
\newcommand{\nwd}{\mathsf{nwd}}
\newcommand{\hc}{\mathsf{HC}}
\newcommand{\fin}{\mathsf{fin}}
\newcommand{\add}{\mathsf{add}}
\newcommand{\non}{\mathsf{non}}
\newcommand{\cov}{\mathsf{cov}}
\newcommand{\p}{\mathbb{P}}
\newcommand{\lf}{\mathcal{L}_f}
\newcommand{\li}{\mathbb{L}(\mathcal{I}^+)}
\newcommand{\lav}{\mathbb{L}}
\newcommand{\mil}{\mathbb{MI}}
\newcommand{\mat}{\mathbb{M}}
\newcommand{\id}{\mathcal{I}}
\newcommand{\ed}{\mathcal{E}\mathcal{D}}
\newcommand{\restr}[2]{#1\upharpoonright {#2}}
\newenvironment{proofclaim}[1][Proof of the claim]{\textbf{#1.} }{\hfill \rule{0.5em}{0.5em}}
\title[Properties of Laver Forcing Associated with a Co-ideal]{Properties of Laver Forcing Associated with a Co-ideal, expressed via the Kat\v{e}tov Order}
\author[Guzmán]{Osvaldo Guzmán}
	\address{Centro de Ciencias Matem\'aticas\\ Universidad Nacional Aut\'onoma de M\'exico\\ Campus Morelia\\Morelia, Michoac\'an\\ M\'exico 58089}
	\curraddr{}
	\email{oguzman@matmor.unam.mx}
	\thanks{The first author was supported by the PAPIIT grant IA 104124}% and the SECIHTI grant CBF2023-2024-903. }
\author[Nieto-de la Rosa]{Francisco Santiago Nieto-de la Rosa}
	\address{Posgrado Conjunto en Ciencias Matem\'aticas UNAM-UMSNH\\Morelia, Michoac\'an\\ M\'exico 58089}
	\curraddr{}
	\email{fsnieto@matmor.unam.mx, francisco.s.nieto@ciencias.unam.mx}
	\thanks{The second author has been supported by CONACYT Scholarship 1043249.}
\author[Ramos]{Ulises Ariet Ramos-García}
	\address{Centro de Ciencias Matem\'aticas\\ Universidad Nacional Aut\'onoma de M\'exico\\ Campus Morelia\\Morelia, Michoac\'an\\ M\'exico 58089}
	\curraddr{}
	\email{ariet@matmor.unam.mx}
	\thanks{The third author was supported by the PAPIIT grant IN116225}
\begin{document}
\begin{abstract}
    We study variants of classical Laver forcing defined from co-ideals and analyze their combinatorial properties in terms of the Kat\v{e}tov order. In particular, we give a Kat\v{e}tov-theoretic characterization of when Laver forcing associated with a co-ideal adds Cohen reals, and we show that such forcings never add random reals. Improving a result of B{\l}aszczyk and Shelah, we prove that the addition of Cohen reals and the Laver property are not equivalent, even in the case of ultrafilters. As an application, we investigate the problem of adding half Cohen reals without adding Cohen reals via tree-like forcings arising from co-ideals. We obtain several partial results and structural obstructions. Finally, we resolve several open questions from the literature concerning the one-to-one or constant property and cardinal invariants associated with ideals.
\end{abstract}

\maketitle

\section{Introduction.}

The Kat\v{e}tov order is a central tool in analyzing definable ideals and their associated forcing notions (See \cite{katvetov}). Given two ideals \( \mathcal{I}, \mathcal{J} \) on \( \omega \), the \emph{Kat\v{e}tov order} (denoted by \( \mathcal{I} \leq_K \mathcal{J} \)) provides a way to compare their combinatorial complexity. We say that \( \mathcal{I} \leq_K \mathcal{J} \) if there exists a function \( f: \omega \to \omega \) such that for every \( A \in \mathcal{I} \), the preimage \( f^{-1}[A] \in \mathcal{J} \). This notion reflects whether the structure of \( \mathcal{I} \) can be embedded, in a sense, into \( \mathcal{J} \). We say that two ideals $\id$ and $\mathcal{J}$ are Kat\v{e}tov equivalent (denoted by $\equiv_K$) if $\id \leq_K \mathcal{J}$ and $\mathcal{J}\leq_K\id$. A related concept is that of a \emph{restriction} of an ideal. Given an ideal \( \mathcal{I} \) and a subset \( X \subseteq \omega \), the restriction of \( \mathcal{I} \) to \( X \), denoted by \( \mathcal{I} \restriction X \), is the collection \( \{ A \subseteq X : A \in \mathcal{I} \} \). %Studying restrictions is useful in understanding local behavior of ideals and how they interact with forcing notions or combinatorial constructions.
%For instance, the relation \( \mathrm{nwd} \leq_K \mathcal{I} \) can provide information about the preservation of category under forcing notions associated to \( \mathcal{I} \).

Some classical ideals that are used in this work are the following and are defined in \cite{hrusak}. First we have $\nwd$ that is the ideal generated by the nowhere dense trees on $2^{<\omega}$ (a tree $T$ is nowhere dense if for every $\tau\in T$ there exists $\sigma\in 2^{<\omega}$ such that no node in $T$ contains $\sigma$). It is equivalent to the ideal of nowhere dense subsets of $\mathbb{R}$ restricted to $\mathbb{Q}$. By $\fin \times \fin$ we denote the ideal on $\omega \times \omega$ consisting of those sets $A$ such that for all but finitely many $n$ the set $\{m : (n,m) \in A\}$ is finite. Finally, $\ed$ is the ideal on $\omega \times \omega$ consisting of those sets $A$ for which there exists $N$ such that for all $n > N$, the set $\{m : (n,m) \in A\}$ has at most $N$ elements.

Given $\alpha$ an ordinal and $X$ a set, we said that $T\subseteq X^{<\alpha}$ is a tree if for all $\tau\in T$, $\{\tau\upharpoonright\beta:\beta<dom(\tau)\}\subseteq T$. For $\sigma,\tau\in T$, we put $\tau\leq \sigma$ if $\sigma\subseteq \tau$. For $\tau\in T$, we define $succ_T(\tau)=\{ n \in \omega : \tau^\frown n \in T \}$. we said that $\tau$ is a splitting node if $|succ_T(\tau)|>1$. The stem of $T$ is the shortest splitting node of $T$ and is denoted by $stem(T)$. If $\tau\in T$ the subtree $\{\sigma\in T :\sigma \subseteq \tau \, \vee \tau\subseteq \sigma\}$ is denoted by $T_\tau$.

Laver, Miller, and Mathias forcing all play central roles in the study of combinatorics on the natural numbers, especially in connection with ultrafilters, cardinal invariants, and the structure of the real line. These three proper forcing notions are fundamental tools in controlling the combinatorial properties of subsets of  $\omega$.

Laver forcing (denoted by $\mathbb{L}$), introduced by Richard Laver in the 1970s, was designed to add a real with strong combinatorial properties, namely a \emph{Laver real}. Formally, a condition in Laver forcing is a tree $T \subseteq \omega^{<\omega}$ such that for all $\tau\leq stem(T)$, $succ_T(\tau)$ is infinite. The ordering is by inclusion: $S \leq T$ if $S \subseteq T$. Also, we put $T\leq_0 S$ if $T\leq S$ and $stem(T)=stem(S)$. 

Laver forcing adds a dominating real, but it does not add Cohen reals, in fact it has the Laver property. It is often used in the study of combinatorial cardinal invariants and the preservation of certain large cardinal properties.

Mathias forcing (denoted by $\mathbb{M}$) is a related but distinct forcing notion defined to study ultrafilters on $\omega$. A condition in Mathias forcing is a pair $(s,A)$ where $s \in [\omega]^{<\omega}$ is finite and $A \subseteq \omega$ is infinite, with $\max(s) < \min(A)$. The extension relation is defined by $(t,B) \leq (s,A)$ if $s \subseteq t \subseteq s \cup A$ and $B \subseteq A \setminus (t \setminus s)$. Mathias forcing adds a dominated real and is closely related to Ramsey theory and the structure of ultrafilters, and also has the Laver property.

Miller forcing (denoted by $\mathbb{MI}$) is another classical forcing notion used to add a real with specific combinatorial features. A condition in Miller forcing is a tree $P \subseteq \omega^{<\omega}$ such that:
\begin{itemize}
    \item $P$ is a subtree of $\omega^{<\omega}$;%, closed under initial segments;
    \item for every $\tau \in P$, either $\tau$ has exactly one immediate successor in $P$, or it has infinitely many;
    \item there are infinitely many nodes $\tau \in P$ such that $succ_P(\tau)$ is infinitely (i.e., the splitting nodes are dense in the tree).
\end{itemize}
The ordering is again by inclusion: $P' \leq P$ if $P' \subseteq P$.

Miller forcing adds a so-called \emph{Miller real}, which is unbounded in the Baire space $\omega^\omega$, but not dominating. It is a powerful tool in the study of small sets of reals, covering properties and the Kat\v{e}tov order of ideals. Unlike Cohen or random forcing, Miller forcing is proper and does not add random or Cohen reals; even more, it has the Laver property. Miller forcing preserves many delicate combinatorial structures and is often used in consistency results concerning the interaction of cardinal characteristics and definable families of reals.

All three of these forcing notions have variants defined with co-ideals (or, more classically, with filters and ultrafilters). If $\mathcal{I}$ is an ideal on $\omega$, the conditions in $\lav$ and $\mil$ modify the definition of conditions by requiring that the set of successors belongs to $\id^+$ (the complement of $\id$), rather than being simply infinite. For $\mat$, we request that the set $A$ belongs to $\id^+$. The notation $\li$, $\mil(\id^+)$ and $\mat(\id^+)$ refer respectively to the Laver, Miller, and Mathias forcings associated with the co-ideal $\id$. These forcings have been studied in \cite{prikry}, \cite{4more},\cite{blaszczyk}. The particular case of Laver forcing associated with a co-ideal has also been studied from other perspectives. For example, in \cite{khomskii} the analysis takes a topological approach.

Another relevant variant of Miller forcing for this work is the full-splitting Miller forcing. A Miller tree $T$ is a full-splitting Miller tree if whenever $succ_T(\tau)$ is infinite, it is actually equal to $\omega$. We denote by $\mathbb{FM}$ the forcing consisting of all full-splitting Miller trees ordered by inclusion.

In general, a countable support iteration of forcings that do not add Cohen or random reals may add a Cohen or random real. The Laver property defined in \ref{LP} is important for two main reasons: A forcing with the Laver property does not add neither Cohen nor random reals, and countable support iteration of forcings with the Laver property has the Laver property.

Keeping this in mind, a natural question is when $\li$, $\mil(\id^+)$ and $\mat(\id^+)$ add Cohen reals, random reals or they have the Laver property. In \cite{4more} (Theorem 9.10) the authors showed the following theorem.

\begin{theorem} [Farah and Zapletal \cite{4more}]\label{FZ}
    Let $\id$ be an ideal on $\omega$. The following are equivalent:
    \begin{enumerate}
        \item $\mat(\id^+)$ does not add Cohen reals;
        \item $\mat(\id^+)$ has the Laver property;
        \item $\id^+$ is semiselective (see Definition 2.1 in \cite{semiselective}).
    \end{enumerate}
\end{theorem}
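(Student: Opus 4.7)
The plan is to establish the cycle $(3) \Rightarrow (2) \Rightarrow (1) \Rightarrow (3)$. The middle implication $(2) \Rightarrow (1)$ is a standard consequence of the Laver property: a forcing with the Laver property cannot add a Cohen real, because the Cohen real in $2^\omega$ escapes every ground-model slalom of subexponential width.

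For $(3) \Rightarrow (2)$, the approach will be a fusion argument driven by semiselectivity. Given $(s,A) \in \mat(\id^+)$ and a name $\dot f$ for an element of $\omega^\omega$ bounded by a ground-model $g$, I would build a decreasing sequence of conditions $(s, A_n)$ with $A_n \in \id^+$, at stage $n$ refining $A_n$ so that, for every length-$n$ stem extension $t \supseteq s$ with $t \setminus s \subseteq A_n$, the condition $(t, A_n \setminus (\max t + 1))$ decides $\dot f(n)$. The collection of $B \in \id^+ \restriction A_n$ witnessing such decisions is open dense in $(\id^+\restriction A_n, \subseteq)$, and semiselectivity then yields a single $A_\omega \in \id^+$ almost-refining every $A_n$ and satisfying all these density requirements simultaneously. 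The slalom $H(n)$ cataloguing the possible values of $\dot f(n)$ below $(s, A_\omega)$ is a ground-model finite set, and a standard thinning during the fusion ensures $|H(n)| \leq n$, producing the Laver property below $(s, A_\omega)$.

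The implication $(1) \Rightarrow (3)$ is the main obstacle, and I would handle it by contrapositive. Assuming $\id^+$ is not semiselective, I would extract $A \in \id^+$ and a countable family of dense open subsets of $(\id^+ \restriction A, \subseteq)$ for which no element of $\id^+ \restriction A$ simultaneously diagonalizes. From this combinatorial witness one defines a $\mat(\id^+)$-name $\dot c \in 2^\omega$ whose $n$-th bit reads off, from the next element of the Mathias generic, which side of a pre-chosen $2$-coloring at the $n$-th level of the witness the generic has selected. The key technical step --- and the hardest point of the proof --- is to verify that every finite binary pattern is densely forced to appear in $\dot c$: given $(s,B) \in \mat(\id^+)$ and $\sigma \in 2^{<\omega}$, the failure of diagonalization must be invoked to extend $(s,B)$ through a sequence of points in $A$ realizing $\sigma$ while keeping the second coordinate in $\id^+$. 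The crux is precisely the translation of ``no diagonalizer exists'' into ``both colors remain reachable at every level below every condition,'' which is what forces $\dot c$ to be Cohen.
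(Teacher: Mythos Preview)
The paper does not prove this theorem: it is quoted in the introduction as Theorem~9.10 of Farah and Zapletal~\cite{4more}, with no argument given. There is therefore no ``paper's own proof'' to compare your proposal against.

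Regarding your sketch on its own terms: the overall cycle $(3)\Rightarrow(2)\Rightarrow(1)\Rightarrow(3)$ is the natural one, and the fusion argument for $(3)\Rightarrow(2)$ is the standard route. One point to watch in that direction is that semiselectivity (as in~\cite{semiselective}) is usually phrased via diagonalization of a countable sequence of dense open subsets of $(\id^+,\subseteq)$, and you need to be careful that the number of stems $t$ with $s\subseteq t\subseteq s\cup A_n$ of length $|s|+n$ is controlled; a careless bookkeeping gives $|H(n)|$ of order $\binom{|A_n\cap k|}{n}$ for some $k$, not $n$, so the ``standard thinning'' you allude to is where the actual work lies. For $(1)\Rightarrow(3)$ your description is quite schematic: the passage from ``no diagonalizer exists'' to a concrete $2$-coloring whose read-off is generic is not automatic, and in the Farah--Zapletal argument this step goes through an intermediate combinatorial characterization rather than a direct coloring construction. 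If you want to fill this in, consult the original source.
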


With respect to $\mil(\id^+)$, the next result is  proved in \cite{sabok} (Theorem 3.6):

\begin{theorem}[Sabok and Zapletal \cite{sabok}]\label{SZ}
    Forcing $\mil(\id^+)$ adds Cohen reals if and only if $\nwd \leq_K \id\upharpoonright X$ 
for some $X\in\id^+$.
\end{theorem}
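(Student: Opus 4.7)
The biconditional will be established through two separate constructions.

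For the ``$\Leftarrow$'' direction, I would fix a Kat\v{e}tov reduction $f:X\to 2^{<\omega}$ witnessing $\nwd\leq_K\id\upharpoonright X$ with $X\in\id^+$, and work below the Miller condition $X^{<\omega}\in\mil(\id^+)$, so that all splitting successor sets are subsets of $X$. Introduce a name $\dot c\in 2^\omega$ whose value along the generic branch $\dot m$ is the concatenation of $f(\dot m(k))$ taken over the splitting levels $k$. To see that $\dot c$ is Cohen over $V$, given any ground-model nowhere dense tree $T\subseteq 2^{<\omega}$ and any condition $S$ whose current decided prefix of $\dot c$ is $s$, one observes that the relativized tree $T^s:=\{\rho:s^\frown\rho\in T\}$ is again nowhere dense; the Kat\v{e}tov property gives $f^{-1}[T^s]\in\id$, so removing $f^{-1}[T^s]$ from the successor set of the next splitting node of $S$ leaves a set in $\id^+$, yielding a stronger Miller condition all of whose continuations force the next chunk $f(\dot n)$ to push $\dot c$ out of $[T]$. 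A standard density argument completes this direction.

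For the ``$\Rightarrow$'' direction, I would assume that $p\Vdash\dot c\in 2^\omega$ is Cohen over $V$. Using the fusion structure on $\mil(\id^+)$ (it is a tree-like forcing with Axiom A) together with the continuous reading of real names, I would find $S\leq p$ and a continuous function $F:[S]\to 2^\omega$ with $S\Vdash\dot c=F(\dot m)$, and by a further fusion refinement ensure that just past each successor $\sigma_0^\frown n$ of the stem $\sigma_0$ a nontrivial common prefix of $F$ on the branches below $\sigma_0^\frown n$ is decided. Setting $X:=\mathrm{succ}_S(\sigma_0)\in\id^+$ and $f(n):=$ the decided prefix below $\sigma_0^\frown n$, I would argue that $f:X\to 2^{<\omega}$ is a Kat\v{e}tov reduction by contradiction. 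If some nowhere dense $T\subseteq 2^{<\omega}$ satisfies $f^{-1}[T]\cap X\in\id^+$, one iterates the construction recursively through successive splitting levels of $S$: at each stage retain only those $\id^+$-subsets of successors whose decided continuations still lie inside the relative tree $T^s$ of the accumulated prefix $s$, and simultaneously build a tree $T^*\subseteq T$ in $V$ recording all prefixes so chosen. The resulting fusion produces a subcondition $S^*\leq S$ with $F([S^*])\subseteq[T^*]$, so that $S^*\Vdash\dot c\in[T^*]$, contradicting Cohenness of $\dot c$ (since $T^*\subseteq T$ is nowhere dense in the ground model).

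The main obstacle is the forward direction, where the simultaneous construction of the fusion tower $S^*$ and the target tree $T^*$ must be carefully balanced: $T^*$ must remain nowhere dense, which is delicate because the $\id^+$-many successors admitted at every splitting level threaten to make $T^*$ cover too much of $2^{<\omega}$. The key device is to insist throughout that the Kat\v{e}tov-failure witness used at each successive level is the appropriate relative tree $T^s$ of the \emph{original} nowhere dense $T$, so that the growing $T^*$ is guaranteed to be contained in $T$ and hence automatically nowhere dense, while the bookkeeping only needs to track the choice of $\id^+$-successors at each level of the fusion.
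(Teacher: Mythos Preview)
The paper does not prove this theorem; it is quoted from Sabok--Zapletal as background. The closest analogue actually proved in the paper is Theorem~\ref{Cohen reals} for $\li$, and comparing your sketch with that argument exposes a real gap in your forward direction.

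Your plan for $(\Rightarrow)$ is to fix the first-level data $X=\mathrm{succ}_S(\sigma_0)$ and $f$, then argue by contradiction that this particular $f$ is a Kat\v{e}tov reduction: if $f^{-1}[T]\in\id^+$ for some nowhere dense $T$, you iterate, at each subsequent splitting level retaining only those successors whose decided prefixes stay inside the relative tree $T^s$, so that the resulting fusion $S^*$ has $F[[S^*]]\subseteq[T]$. The problem is that you have no reason to expect these retained successor sets to be $\id$-positive. The failure of $f$ at level $0$ tells you only that $f^{-1}[T]\in\id^+$; it says nothing about whether, at a deeper splitting node $\tau$ with accumulated prefix $s$, the set of successors whose decided prefix lies in $T$ is $\id$-positive. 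If that set lands in $\id$, your iteration stalls, and you obtain neither a contradiction with Cohenness nor a Kat\v{e}tov witness. Thus your ``key device''---forcing $T^*\subseteq T$ by always using the relative tree of the \emph{original} $T$---is precisely the step that is unjustified.

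The route that works (and is the one taken in the paper's proof of Theorem~\ref{Cohen reals}) is the contrapositive: assume $\nwd\not\leq_K\id\upharpoonright X$ for \emph{every} $X\in\id^+$, and show no Cohen real is added. Under this hypothesis, at every splitting node and for every positive restriction the associated function fails to be Kat\v{e}tov, yielding \emph{some} nowhere dense witness $B_k$---but these witnesses are a priori unrelated, and the genuine difficulty your sketch sidesteps is assembling $A=\bigcup_k B_k$ so that $A$ remains nowhere dense. In the paper this is handled by the bookkeeping conditions (9)--(11): each new $B_{k+1}$ is forced to agree with earlier ones on sufficiently long initial segments, while at every finite stage a node outside all trees built so far is reserved. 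Your argument would need an analogous mechanism; the containment $T^*\subseteq T$ cannot substitute for it.
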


While for Miller and Mathias we have already characterized the ideals whose associated forcing add Cohen reals, for $\li$, in \cite{blaszczyk} the authors proved the next result just for ultrafilters. Remember that $\mathcal{F}$ is a nowhere dense filter if the dual ideal $\mathcal{F}^*$ is not Kat\v{e}tov above $\nwd$.

\begin{theorem}[B{\l}aszczyk and Shelah \cite{blaszczyk}]\label{BS}
    Let $\mathcal{U}$ be an ultrafilter. The following are equivalent:
    \begin{enumerate}
        \item $\lav(\mathcal{U})$ does not add Cohen reals;
        \item $\mathcal{U}$ is nowhere dense.
    \end{enumerate}
\end{theorem}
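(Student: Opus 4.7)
The plan is to prove both implications through the Katětov characterization that $\mathcal{U}$ is nowhere dense iff $\nwd\not\leq_K\mathcal{U}^*$, i.e., iff there is no $f:\omega\to\mathbb{Q}$ with $f^{-1}[D]\in\mathcal{U}$ for every open dense $D\subseteq\mathbb{Q}$.

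For $(1)\Rightarrow(2)$ I would argue by contrapositive. Assuming such an $f$, the key density fact is that for any $T\in\lav(\mathcal{U})$ with stem $\sigma$ and any open dense $D\subseteq\mathbb{Q}$ in $V$, the subtree of $T$ obtained by restricting each successor set past $\sigma$ to its intersection with $f^{-1}[D]$ is again a condition in $\lav(\mathcal{U})$, since $succ_T(\tau)\cap f^{-1}[D]\in\mathcal{U}$ as the intersection of two $\mathcal{U}$-sets. Hence in the generic extension $\dot r:=f\circ\dot\ell$ eventually enters every ground-model open dense subset of $\mathbb{Q}$. To extract an actual Cohen real in $2^\omega$, one uses that at each Laver node $\tau$ the $f$-image of $succ_T(\tau)$ is dense in some basic open interval of $\mathbb{Q}$ (an immediate consequence of the failure of $\mathcal{U}$ being nwd): this supplies enough residual freedom to iteratively force $\dot r$ into a nested sequence of basic open intervals encoding the bits of a Cohen name in $2^\omega$ via the standard bijection between basic clopens of $2^\omega$ and shrinking rational intervals.

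For $(2)\Rightarrow(1)$ I would assume $\mathcal{U}$ nwd; given a name $\dot c$ for an element of $2^\omega$ and a condition $T$, I aim to build $T^*\leq T$ and a ground-model nwd $N\subseteq 2^\omega$ with $T^*\Vdash\dot c\in N$. By continuous reading of names for $\lav(\mathcal{U})$ (via the standard axiom-A fusion), I reduce to $\dot c=F(\dot\ell)$ for a continuous $F:[T]\to 2^\omega$ in $V$, and for each $\tau\in T$ past the stem let $\phi(\tau)\in 2^{<\omega}$ denote the longest common initial segment of $F$ on $[T_\tau]$. The fusion proceeds level by level: at each $\tau$, applying $\nwd\not\leq_K\mathcal{U}^*$ to the map $n\mapsto\phi(\tau^\frown n)$ (from $succ_T(\tau)$, transferred to $\omega$ by a bijection) yields $A_\tau\in\mathcal{U}$ with $A_\tau\subseteq succ_T(\tau)$ and a nowhere dense tree $N_\tau\subseteq 2^{<\omega}$ in $V$ such that $\phi(\tau^\frown n)\in N_\tau$ for all $n\in A_\tau$. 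After coordinating so that $N_{\tau^\frown n}\subseteq N_\tau$ holds throughout the fusion, the resulting tree $T^*$ with $succ_{T^*}(\tau)=A_\tau$ forces $F(\dot\ell)$ into the nowhere dense branch set $[N_\sigma]$, where $\sigma=stem(T)$, contradicting the Cohen property.

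The main obstacle is enforcing the nesting $N_{\tau^\frown n}\subseteq N_\tau$ during the fusion, since the Katětov hypothesis at each node produces only some nwd tree without a priori relationship to the tree chosen at the predecessor. One resolves this by, at each stage, reapplying the Katětov property with the codomain already restricted to the ambient $N_\tau$, and using the ultrafilter property of $\mathcal{U}$ to verify that the further-restricted successor sets remain in $\mathcal{U}$; this is the essential place where the ultrafilter assumption, beyond mere coideal, is used in the B{\l}aszczyk--Shelah proof.
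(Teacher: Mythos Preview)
The paper does not prove Theorem~\ref{BS} directly; it is cited from \cite{blaszczyk}. What the paper proves is the generalization Theorem~\ref{Cohen reals}, which specializes to Theorem~\ref{BS} when $\id=\mathcal{U}^*$ (since for an ultrafilter every restriction $\mathcal{U}^*\!\upharpoonright X$ with $X\in\mathcal{U}$ is Kat\v{e}tov equivalent to $\mathcal{U}^*$). Your outline should therefore be compared with the proof of Theorem~\ref{Cohen reals}.

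For the direction ``not nowhere dense $\Rightarrow$ adds Cohen'' your idea is in the same spirit as the paper's, though the paper is more explicit: it builds the Cohen name by concatenating blocks $f(\dot r_{gen}(k))$ with controlled shifts, rather than composing $f$ with the Laver generic and extracting a Cohen real afterwards. Your sketch here is workable but underspecified.

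There is, however, a genuine gap in your $(2)\Rightarrow(1)$ argument. The Kat\v{e}tov hypothesis at a node $\tau^\frown n$ produces \emph{some} nowhere dense tree $N_{\tau^\frown n}$ with $\{m:\phi(\tau^\frown n^\frown m)\in N_{\tau^\frown n}\}\in\mathcal{U}$, but there is no mechanism forcing $N_{\tau^\frown n}\subseteq N_\tau$. Your proposed fix, ``restrict the codomain to the ambient $N_\tau$'', does not work: the values $\phi(\tau^\frown n^\frown m)$ extend $\phi(\tau^\frown n)\in N_\tau$, but extensions of nodes of a nowhere dense tree typically leave the tree, so there is no reason for $\{m:\phi(\tau^\frown n^\frown m)\in N_\tau\}$ to lie in $\mathcal{U}$, and the ultrafilter property does not help. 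Without nesting you only obtain that each initial segment of $F(\dot\ell)$ lies in \emph{some} $N_\tau$, hence $F(\dot\ell)\in[\bigcup_\tau N_\tau]$; but a countable union of nowhere dense trees need not be nowhere dense, so this does not preclude $F(\dot\ell)$ from being Cohen.

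The paper's proof of Theorem~\ref{Cohen reals} avoids nesting altogether. It enumerates the nodes of the fused tree as $\sigma_k^S$ and at stage $k$ records not only a nowhere dense tree $B_k$ but also a height $g(k)$, chosen so that conditions (9)--(11) hold. Condition (9), which comes from the pure decision property (the child's decision extends the parent's), forces $B_{k+1}$ to agree with a fixed branch of $B_{\ell_k}$ below level $g(k)$; after pruning one gets (10), and (11) guarantees that at every stage there is still a node escaping $\bigcup_{i\le k}B_i$. These three together make $A=\bigcup_k B_k$ nowhere dense, which is exactly the obstacle you identified but did not overcome. Note finally that this argument uses only that each $f_k'$ fails to be Kat\v{e}tov, never the ultrafilter property; your claim that the ultrafilter assumption is essential at the nesting step is therefore incorrect, as the paper's generalization to arbitrary ideals confirms.
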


\textbf{Disclaimer:} In this work, the sentence “$\mathbb{P}$ adds a Cohen real” (and analogously for random reals or any other kind of reals) means that there is some $p\in\mathbb{P}$ such that 
\[
p\Vdash ``\text{there is a Cohen real over }V".
\]
Therefore, when we write “$\mathbb{P}$ does not add a Cohen real”, we mean that in any generic extension of $V$ obtained via $\mathbb{P}$, there is no Cohen real over $V$.

With this definition in mind, we proceed to describe the structure of the paper.

In Section 2 we get a theorem as Theorem \ref{SZ} that generalizes Theorem \ref{BS} and answers Question 4.18 of \cite{poke}.

In \cite{fernandez}, the author showed the following theorem for some specific ultrafilters on the finite subsets of $\omega$ related to the Laver property. All definitions used in this theorem can be found in \cite{fernandez} (Theorem 3.6). 

\begin{theorem}[Fernández-Bretón \cite{fernandez}]\label{breton}
    If $\mathcal{U}$ is a stable ordered union ultrafilter, then $\lav(\mathcal{U})$ has the Laver property.
\end{theorem}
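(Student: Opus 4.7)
The plan is to adapt the classical fusion-style proof that Laver forcing has the Laver property to $\lav(\mathcal{U})$, where the role of the Ramsey/selectivity properties of the Fr\'echet filter on $\omega$ is played by the Hindman-type partition property that a stable ordered union ultrafilter on $\text{FIN}$ enjoys. Fix a condition $T \in \lav(\mathcal{U})$ and a name $\dot{f}$ forced to be bounded by some $g \in V \cap \omega^\omega$; the goal is to find $T' \leq T$ and a ground-model slalom $H : \omega \to [\omega]^{<\omega}$ with $|H(n)| \leq n$ such that $T' \Vdash \forall n\, \dot{f}(n) \in H(n)$.

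I would build a fusion sequence $T = T_0 \geq_0 T_1 \geq_1 T_2 \geq_2 \cdots$ where $T_{n+1} \leq_n T_n$ means that the first $n$ splitting levels above the stem are preserved while the successor sets at deeper splitting nodes may shrink inside $\mathcal{U}$. At stage $n$, enumerate the $n$-th splitting level of $T_n$ and process each node $\sigma$ of it: for each $s \in succ_{T_n}(\sigma) \in \mathcal{U}$, extend $(T_n)_{\sigma^\frown s}$ to a condition that decides $\dot{f}(n)$, producing a coloring $c_\sigma : succ_{T_n}(\sigma) \to g(n)$; the ordered-union partition property of $\mathcal{U}$ then furnishes a $\mathcal{U}$-large homogeneous set $A_\sigma \subseteq succ_{T_n}(\sigma)$ on which $c_\sigma$ is constant with value $v(\sigma)$, and I replace $succ_{T_n}(\sigma)$ by $A_\sigma$ when building $T_{n+1}$. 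Stability of $\mathcal{U}$ is invoked twice: to amalgamate the infinite family of per-node shrinkings at each stage into a single coherent condition, and, at the end, to guarantee that the limit $T_\omega = \bigcap_n T_n$ is again in $\lav(\mathcal{U})$ --- each successor set of $T_\omega$ is a diagonal pseudo-intersection of countably many elements of $\mathcal{U}$, and stability keeps it $\mathcal{U}$-large.

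Finally, letting $H(n) = \{v(\sigma) : \sigma \text{ is among the first } n \text{ level-}n \text{ splitting nodes of } T_\omega\}$ and using the standard bookkeeping that schedules at most $n$ splitting-node tasks per stage $n$, we obtain $|H(n)| \leq n$ and $T_\omega \Vdash \forall n\, \dot{f}(n) \in H(n)$. The main obstacle is precisely this bookkeeping: the $n$-th splitting level of a Laver tree is already infinite, so a naive application of the ordered-union Ramsey property is not sufficient, and one must interleave the processing of splitting nodes across fusion stages while using stability to glue the successor-set shrinkings together. The interaction between the Hindman structure of $\text{FIN}$ and the tree structure of $\text{FIN}^{<\omega}$ is the crux of the argument, and is exactly why the theorem requires stable ordered union ultrafilters rather than arbitrary ultrafilters on $\text{FIN}$.
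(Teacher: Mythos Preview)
The paper does not contain a proof of this theorem: it is quoted in the introduction as a result of Fern\'andez-Bret\'on (Theorem~3.6 of \cite{fernandez}) purely for motivation, and the reader is referred to that paper for both the definitions and the argument. There is therefore nothing in the present paper to compare your proposal against.

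That said, your outline is in the right spirit and matches the shape of the argument in \cite{fernandez}: one runs a fusion below the given condition, uses pure decision (available because $\mathcal{U}$ is an ultrafilter) to reduce deciding $\dot f(n)$ to a finite colouring of a $\mathcal{U}$-large set of successors, invokes the canonical partition property of ordered union ultrafilters to homogenize, and relies on stability to keep the diagonal limit of the fusion in $\lav(\mathcal{U})$. You have also correctly located the only real difficulty --- the $n$-th splitting front is infinite, so one cannot simply collect all decided values at level~$n$ into $H(n)$ --- and the remedy you suggest (schedule finitely many splitting nodes per stage and let stability absorb the rest) is exactly what is done. One point of caution: the first paragraph of your sketch says ``process each node $\sigma$'' of the $n$-th splitting level, which literally contradicts the bookkeeping you describe afterwards; in writing this up you should commit from the start to the diagonal enumeration of splitting nodes (so that only the first $n$ of them are handled at stage $n$), since otherwise the argument as written produces an infinite $H(n)$.
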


Motivated by the four theorems mentioned above and by the relevance of the Laver property mentioned above, in Section 3 we define a family of ideals which provides a necessary and sufficient condition on an ideal $\id$ for $\li$ has the Laver property, analogous to Theorem \ref{SZ}. In particular, we prove that for an ideal $\id$ is not equivalent for $\li$ having the Laver property and $\li$ does not add Cohen reals. In addition, it is not equivalent even for ultrafilters consistently. There is a $P$-point $\mathcal{U}$ such that $\lav(\mathcal{U})$ does not add Cohen reals and it does not have the Laver property.

In Section 4 we provide necessary and sufficient conditions to determinate when $\li$ preserves outer measure in terms of Kat\v{e}tov order. This is inspired by a similar result for Miller forcing proved in \cite{sabok}.

In Section 5, we explore Question 3.2 of \cite{hc}, which concerns forcing notions that add a half Cohen real but no Cohen reals. In light of Theorem \ref{Cohen reals}, we attempt to use forcings of the form $\li$ or $\mil(\id^+)$ to address this question. Although we generalize some of the partial results presented in \cite{fullmil}, the problem remains unsolved.

Sections 6 and 7 focus on answering some questions that appear in \cite{poke}. In particular, Question 2.13 wonders if $\lav((\fin\times \fin)^+)$ has the 1-1 or constant property (defined in Section 5) studied in \cite{sabok} and \cite{poke}. In Section 6 we study cardinal invariants related to some particular ideals. In \cite{hernandez}, the authors introduce the following cardinal invariants associated with an ideal.

\begin{definition}
    Given an ideal $\id$,
    \[
    \add^*(\id)=\min\{ |\mathcal{A}| : \mathcal{A}\subseteq \id \ \text{and}\ \forall X\in \id \ \exists A\in \mathcal{A} \ (|A\setminus X|=\omega)\}
    \]

    \[
    \non^*(\id)= \min\{|\mathcal{A}| : \mathcal A \subseteq [\omega]^\omega \ \text{and}\ \forall X\in \id \ \exists A\in \mathcal{A} \ (A\cap X =^*\emptyset) \}
    \]
\end{definition}

Recall that $X=^*Y$ means the symmetric difference is finite.

These cardinal characteristics, as well as several related invariants, were further studied in \cite{hrusak}. Later, in \cite{poke}, the authors introduced an ``$\omega$-version'' of these two invariants. These variants will be defined in Section~5, where we answer Question~4.10 and partially answer Question~4.11 from that paper.

Given $S,T$ subtrees of $\omega^{<\omega}$, we write $S\leq_0 T$ if $S$ is a subtree of $T$ and $stem(S)=stem(T)$. In addition, $[T]$ is the set of branches of $T$, that is, $\{f\in \omega^{\omega}: \forall n<\omega\, (\restr{f}{n}\in T)\}$ (this can be defined for trees in $2^\omega$ similarly). 

\begin{proposition}[Folklore]
    For any ideal $\mathcal{I}$, $\li$ satisfies:
    \begin{enumerate}
        %\item (A axiom)
        \item (pure decision property) If $T\in\li$, $\dot x$ is a $\li$-name and $T\Vdash`` \dot x\in 2"$, there is $S\leq_0 T$ such that $S\Vdash`` \dot x=0"$ or $S\Vdash`` \dot x=1"$.
        \item (continuous reading of names)
        If $T\in\li$ and $\dot x$ is a $\li$-name such that $T\Vdash`` \dot x \in\omega^\omega$", then there is $S\leq T$ and $F:[S]\rightarrow2^\omega$ a continuous ground model function such that \begin{displaymath}
        S \Vdash ``F(\dot r_{gen})= \dot x"
    \end{displaymath}
    where $\dot r_{gen}$ is the name of the generic real.
    \end{enumerate}
\end{proposition}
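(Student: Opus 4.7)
The plan is to prove (1) by the standard dichotomy argument exploiting closure of $\id$ under finite unions, and then to derive (2) from an $\omega$-valued strengthening combined with a fusion construction.

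For (1), fix $T\in\li$ with stem $s$ and a name $\dot x$ with $T\Vdash``\dot x\in 2"$. For $\tau\in T$ with $\tau\supseteq s$, write $T^{\langle\tau\rangle}:=\{\sigma\in T:\sigma\subseteq\tau\text{ or }\sigma\supseteq\tau\}$, and call $\tau$ \emph{good} if some $R\leq_0 T^{\langle\tau\rangle}$ decides $\dot x$, and \emph{bad} otherwise. The key subclaim is that whenever $\tau$ is bad, the set $B_\tau:=\{k\in succ_T(\tau):\tau^\frown k\text{ is bad}\}$ belongs to $\id^+$. Otherwise $G_\tau:=succ_T(\tau)\setminus B_\tau\in\id^+$; splitting $G_\tau=G_\tau^0\cup G_\tau^1$ according to the value each good $\tau^\frown k$ forces, closure of $\id$ under finite unions gives some $G_\tau^i\in\id^+$, and gluing witnesses $R_k\leq_0 T^{\langle\tau^\frown k\rangle}$ (for $k\in G_\tau^i$) into a single condition $\leq_0 T^{\langle\tau\rangle}$ forcing $\dot x=i$ would contradict $\tau$ being bad. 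If $s$ itself were bad, setting $S^*:=\{\sigma\in T:\sigma\subseteq s\}\cup\{\sigma\in T:\sigma\supseteq s\text{ and }\sigma\text{ is bad}\}$ would give $S^*\leq_0 T$ with $succ_{S^*}(\tau)=B_\tau\in\id^+$ at every bad $\tau\supseteq s$, so $S^*\in\li$; by density some $S'\leq S^*$ decides $\dot x$, whose stem $\tau'\in S^*$ is bad while $S'\leq_0 T^{\langle\tau'\rangle}$ witnesses it good---a contradiction.

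For (2), apply the same dichotomy to the weaker notion ``$\tau$ is weakly good'', meaning that some $R\leq_0 T^{\langle\tau\rangle}$ has a maximal antichain $D\subseteq R$ of nodes above $\tau$ such that $R^{\langle\sigma\rangle}$ decides $\dot x(n)$ for every $\sigma\in D$. The verification is identical: if $\tau$ is not weakly good then the non-weakly-good successors form an $\id^+$-set (otherwise the union of the witness fronts supplied by a weakly-good majority of successors would itself make $\tau$ weakly good), and the associated ``bad subtree'' again contradicts density. Iterating this via a fusion $T\geq T_0\geq T_1\geq\cdots$, where at stage $n$ each $T_n^{\langle\sigma\rangle}$ (for $\sigma$ in the front produced at stage $n-1$) is $\leq_0$-refined to a new front on which $\dot x(n)$ is decided, the fusion limit $S$ determines each $\dot x(n)$ from a finite initial segment of the generic, yielding a continuous ground-model function $F\colon[S]\to\omega^\omega$ with $S\Vdash F(\dot r_{gen})=\dot x$.

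The main obstacle is ensuring that the fusion limit $S=\bigcap_n T_n$ lies in $\li$, i.e.\ that every $succ_S(\sigma)$ for $\sigma\supseteq s$ remains in $\id^+$. Standard bookkeeping handles this: at stage $n$ one thins only the subtrees above the current front, leaving the finite portion below untouched, so each node has its successor set adjusted at most once during the fusion and always to a set in $\id^+$.
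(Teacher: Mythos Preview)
The paper states this proposition as folklore and gives no proof, so there is nothing to compare your attempt against; it must be judged on its own.

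Your argument for (1) is the standard bad-node dichotomy and is correct as written.

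For (2), introducing the ``weakly good'' notion is the right move: pure decision for $2$-valued names does not directly yield pure decision for $\omega$-valued names (the obvious attempt---successively deciding ``$\dot x(n)=0$'', ``$\dot x(n)=1$'', \dots\ via (1)---produces a $\leq_0$-decreasing $\omega$-sequence with no guaranteed lower bound), so redoing the bad-node argument for the weaker target is necessary, and your verification that the stem is weakly good is correct.

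The one slip is in the final bookkeeping sentence. Your fronts $D_n$ need not \emph{strictly} increase: if $\sigma\in D_{n-1}$ and the $\leq_0$-refinement of $T_{n-1}^{\langle\sigma\rangle}$ already happens to decide $\dot x(n)$, then the new front produced at $\sigma$ is just $\{\sigma\}$, so $\sigma\in D_n$ too and its successor set gets thinned again at stage $n+1$. If this repeats for infinitely many $n$, the successor set of $\sigma$ in $S=\bigcap_n T_n$ is the intersection of a strictly decreasing $\omega$-sequence of sets in $\id^+$, which need not remain in $\id^+$ for an arbitrary ideal. The fix is immediate---after producing each front, pass to the front of its immediate successors, forcing $D_n$ to sit strictly above $D_{n-1}$---but the assertion ``each node has its successor set adjusted at most once during the fusion'' is not justified without it.
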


Let $T$ be a tree and $\tau\in T$. We write $T_\tau$ to denote the biggest subtree of $T$ with stem $\tau$, i.e., $\{\sigma \in T: \sigma\subseteq \tau \vee \tau\subseteq \sigma \}$.

\section{When can $\li$ add Cohen or random reals?}

In this section, we study when the Laver forcing associated with a co-ideal $\mathcal{I}^+$ adds Cohen or random reals. While for other classical forcing notions like Miller or Mathias, the addition of Cohen reals has been characterized in terms of the Kat\v{e}tov order (see Theorems~\ref{FZ} and~\ref{SZ}), the case of Laver forcing has remained less explored. 

Our main result in this section is a characterization, in terms of the Kat\v{e}tov order, of when $\mathbb{L}(\mathcal{I}^+)$ adds Cohen reals. In contrast, we also show that $\mathbb{L}(\mathcal{I}^+)$ never adds random reals, regardless of the ideal $\mathcal{I}$.

The following theorem is based on unpublished notes of Jörg Brendle, where the result is proved for filters instead of co-ideals.

\begin{theorem} \label{Cohen reals}
    For every ideal $\id$ on $\omega$. Then $\li$ adds Cohen reals if and only if there is $X\in\id^+$ such that $\nwd \leq_K \restr{\id}{X}$.
\end{theorem}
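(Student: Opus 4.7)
The proof splits into two implications. For the easy direction — assuming there is $X \in \id^+$ with $\nwd \leq_K \restr{\id}{X}$, witnessed by $f: X \to 2^{<\omega}$ — I would consider the Laver condition $T_X := X^{<\omega}$ together with the $\li$-name $\dot c := f(\dot r(0)) \frown f(\dot r(1)) \frown \cdots$ concatenating the $f$-values along the coordinates of the generic Laver real $\dot r$. Since $\{\emptyset\} \in \nwd$, the Katětov reduction forces $\{n \in X : f(n) = \emptyset\} \in \id$, so by passing to an $\id^+$-subset we may assume $|f(n)| \geq 1$ for $n \in X$, ensuring $\dot c \in 2^\omega$. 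To see $T_X \Vdash \dot c$ Cohen over $V$, take any ground-model dense $D \subseteq 2^{<\omega}$ and any $S \leq T_X$ with $stem(S) = \tau^*$, and set $s^* := f(\tau^*(0)) \frown \cdots \frown f(\tau^*(|\tau^*|-1))$ and $Y := succ_S(\tau^*) \in \id^+$. The contrapositive of Katětov reduction gives $f[Y] \notin \nwd$, producing $u \in 2^{<\omega}$ such that every $v \supseteq u$ has $v \sqsubseteq f(r)$ for some $r \in Y$. Choosing $v^* \supseteq s^* \frown u$ in $D$, writing $v^* = s^* \frown u \frown t$, and locating $r \in Y$ with $u \frown t \sqsubseteq f(r)$, one obtains $S_{\tau^* \frown r} \Vdash \dot c \supseteq v^* \in D$, showing $\dot c$ is Cohen.

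For the forward direction, start from $T_0 \in \li$ and a $\li$-name $\dot c$ with $T_0 \Vdash \dot c \in 2^\omega$ Cohen over $V$. By continuous reading of names strengthen to $T \leq T_0$ and obtain a ground-model continuous $F: [T] \to 2^\omega$ such that $T \Vdash F(\dot r_{gen}) = \dot c$; write $\hat F: T \to 2^{<\omega}$ for the associated monotone approximation (so $F(\xi) = \bigcup_n \hat F(\xi \upharpoonright n)$ for $\xi \in [T]$). The plan is to fusion-strengthen $T$ to $T^* \in \li$ enjoying two properties: (i) strict growth, $\hat F(\sigma \frown n) \supsetneq \hat F(\sigma)$ whenever $\sigma \in T^*$ lies at or above $stem(T^*)$ and $n \in succ_{T^*}(\sigma)$; and (ii) the first-level map $f(n) := \hat F(stem(T^*) \frown n)$ on $X := succ_{T^*}(stem(T^*))$ itself witnesses $\nwd \leq_K \restr{\id}{X}$. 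Property (i) is achieved by a telescoping argument: the ``constant'' subtree $\{\sigma \in T : \hat F(\sigma) = \hat F(stem(T))\}$ has no infinite branches (else $F(\xi)$ would equal $\hat F(stem(T))$, which is finite, contradicting $F(\xi) \in 2^\omega$), hence is not itself in $\li$; therefore at some $\sigma^* \supseteq stem(T)$ the successors $\{n : \hat F(\sigma^* \frown n) \supsetneq \hat F(\sigma^*)\}$ form an $\id^+$ set, and iterating this diagonalization yields $T^*$.

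The main obstacle is securing property (ii). The key leverage is that Cohen-ness forces $F[[T']]$ to be somewhere dense in $2^\omega$ for every $T' \leq T^*$ in $\li$: otherwise $\overline{F[[T']]}$ would be a ground-model closed nowhere-dense set into which $T'$ forces $\dot c$, contradicting Cohen-ness. Applied to $T' = T^{*,Y}$ — the Laver subtree restricting the first-level successors of $T^*$ to a candidate $Y \in (\restr{\id}{X})^+$ — somewhere-density yields some $\sigma_Y \in 2^{<\omega}$ with $[\sigma_Y] \subseteq \overline{F[[T^{*,Y}]]}$, so that every $\tau \supseteq \sigma_Y$ is a prefix of $F(\xi)$ for some $\xi \in [T^{*,Y}]$. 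The crux of the forward direction is an iterative fusion that transports this branch-level density down to first-level density $\tau \sqsubseteq f(r)$ for some $r \in Y$, handled simultaneously across all candidate $Y$ by exploiting (i) to make first-level $f$-values unboundedly long — so that arbitrarily long $\tau$ already fit inside $f(r)$ rather than extending it — combined with careful bookkeeping over the countably many ``atomic'' density requirements defining $\nwd^+$. Once (i) and (ii) are secured, $f$ is immediately a Katětov reduction: if some $A \in \nwd$ had $f^{-1}[A] \in \id^+$, then setting $Y := f^{-1}[A]$ would give $f[Y] \subseteq A \in \nwd$, contradicting (ii).
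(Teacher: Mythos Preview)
Your $(\Leftarrow)$ direction is correct and is essentially the paper's argument, streamlined by taking $f$ valued in $2^{<\omega}$ rather than $2^\omega$.

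The forward direction, however, has a genuine gap at exactly the point you flag as ``the crux.'' Your plan is to fusion-strengthen to some $T^*$ and then argue that the \emph{single first-level map} $f(n)=\hat F(stem(T^*)^\frown n)$ on $X=succ_{T^*}(stem(T^*))$ is a Kat\v{e}tov reduction. For that you need $f[Y]\notin\nwd$ for \emph{every} $Y\in(\restr{\id}{X})^+$---uncountably many requirements---yet your fusion has only countably many stages and must be completed before any particular $Y$ is chosen. The leverage you cite, that $F[[T^{*,Y}]]$ is somewhere dense in $2^\omega$, only tells you that for every $\tau$ in some cone there is a \emph{branch} $\xi\in[T^{*,Y}]$ with $\tau\sqsubseteq F(\xi)$; it does not give $\tau\sqsubseteq f(r)$ for any $r\in Y$, because $f(r)=\hat F(stem(T^*)^\frown r)$ may be strictly shorter than $\tau$. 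Making the set $\{|f(r)|:r\in X\}$ unbounded does not help: for a fixed adversarial $Y$ the values $\{f(r):r\in Y\}$ can still be concentrated in a nowhere-dense tree of $2^{<\omega}$, with all the branch-level density witnessed only at deeper levels of $T^{*,Y}$. Nothing in the sketch explains how a countable bookkeeping over basic open sets could defeat this, since the same basic open set may require different witnesses $r$ for different $Y$.

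The paper avoids this obstacle by proving the contrapositive. Assuming $\nwd\not\leq_K\restr{\id}{X}$ for all $X\in\id^+$, it takes an arbitrary name $\dot h$ and condition $T$ and builds $S\leq T$ by a fusion in which, at \emph{every} node $\sigma_k^S$, the successor-level map $f_k:succ(\sigma_k^S)\to 2^\omega$ is known \emph{not} to be Kat\v{e}tov; this yields $X_k\in\id^+$ and a nowhere-dense tree $B_k$ with $f_k[X_k]\subseteq[B_k]$. The point is that the hypothesis is invoked once per node, producing one nowhere-dense piece per node, and the delicate part of the argument is arranging the growth conditions so that $A=\bigcup_k B_k$ is itself nowhere dense while $S\Vdash \dot h\in[A]$. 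Your approach tries to extract a single Kat\v{e}tov function from a single node; the paper instead exploits the failure of Kat\v{e}tov reduction at every node to cover the putative Cohen real by a single nowhere-dense set. That shift---from one node handling all $Y$ to all nodes each handling one piece---is what makes the argument go through.
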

\begin{proof}
$(\Leftarrow)$ To prove that $\li$ adds a Cohen real, fix $X \in \id^+$ and a witness $f: X \to 2^\omega$ for $\nwd \leq_K \id \upharpoonright X$. Set $P = X^{<\omega}$. Recursively we ca construct a $\li$-name $\dot c$ such that for every $T \leq P$, if $k$ is the maximum such that $T$ decides $\dot c \upharpoonright k$, then for all $n\in succ_T(stem(T))$,

\begin{displaymath}
    T_{stem(T)^\frown n}\Vdash `` \forall i<n \, ( \dot c(k+i)=f(n)(i))".
\end{displaymath}
    
We claim that $\dot c$ is a Cohen real.

Let $N \in \nwd$, and let $R_N$ be the nowhere dense tree such that $N = [R_N]$. For any $T \leq P$, we show there exists $T' \leq T$ with $T' \Vdash ``\dot c \notin N"$. Let $k < \omega$ be the largest number such that $T$ decides $\dot c \upharpoonright k$, and let $s \in 2^k$ with $T \Vdash ``\dot c \upharpoonright k = s"$.

For each $n \in succ_T(stem(T))$, define $g_n : \omega \to 2$ such that for all $i<k$, $g_n(i)=s(i)$, and for $i \geq k$, $g_n(i)=f(n)(i-k)$. Notice that $g_n$ is a translation of $f(n)$, since $f[succ_T(stem(T))]\in \nwd^+$, then $G=\{g_n:n\in succ_T(stem(T))\}\in \nwd^+$. Since $N\in \nwd$ and $int(\bar{G})\not=\emptyset$ there is $s'\supseteq s$ such that $\langle s'\rangle\subseteq \bar G$ and $s'\notin R_N$. In particular $\langle s'\rangle\cap G\not=\emptyset$. In addition, if \[Y=\{n\in succ_T(stem(T)):s'\subseteq g_n\},\] then $Y$ is infinite.

Fix $n\in Y$ larger than $k$ and $|s'|$. By construction
\begin{displaymath}
    T_{stem(T)^\frown n}\Vdash``\forall i<n (\dot c(k+i)=f(n)(i))".
\end{displaymath}
Choosing $i=|s'|$ we have
\begin{displaymath}
    T_{stem(T)^\frown n}\Vdash``\dot c(k+i)=f(n)(i)=g_n(i+k)".
\end{displaymath}

Since $s'\notin R_N$ we have that $T_{stem(T)^\frown n}$ forces $\dot c\upharpoonright k+i+1\not\in R_N$ and we are done.

$(\Rightarrow)$ By contrapositive. Let $\dot h$ be a $\li$- name for a real and $T\in \li$. We must show that there is $S\leq T$ and $A$ a nowhere dense tree such that $S\Vdash``\dot h\in [A]"$.

%%%%
%For each $k$ let $L_{k+1}=\{s_k^\frown n:n<\omega\}$.%%%%

For all $R\in \li$ enumerate $R\setminus \{\sigma\in R:\sigma\leq stem(R)\}$ as $\{\sigma_k^R:k\in \omega\}$ such that:

\begin{enumerate}
    \item[(a)] $\sigma_k^R<\sigma_\ell^R$ implies $k<\ell$ and
    \item[(b)] if $\sigma$ is such that $\sigma_k^R=\sigma^\frown n$, $\sigma_\ell^R=\sigma^\frown m$ and $n<m$, then $k<\ell$.
\end{enumerate}

In order to obtain the condition $S$ and the nowhere dense set $A$, we construct by recursion for each $k<\omega$ the following:

\begin{enumerate}[(i)]
    \item $\sigma_k^S\in\omega^{<\omega}$.
    \item $S^k\in \li$.
    \item $X_{k}\in \id^+$.
    \item $\forall n\in X_k, \{T^{\ell}(k,n):\ell<\omega\} \subseteq \li$.
    \item $f_k:X_{k}\rightarrow2^\omega$.
    \item $B_{k}$ a nowhere dense tree.
    \item $g(k),n_k, \ell_k\in\omega$ .
\end{enumerate}

such that $\{\sigma^S_{k}:k<\omega\}$ has the properties (a) and (b) and for all $k$:

\begin{enumerate}
    \item $\ell_k\leq k$ and $\ell_k=k$ only if $k=0$.
    \item $\sigma^S_0=stem(T)$ and $\sigma^S_{k+1}=\sigma^{S^k}_{k+1}={\sigma^{S^k}_{\ell_k}}^\frown n_k$.
    \item $S^0=\bigcup\{T^{0}(0,n):n\in X_0\}$, $S^{k+1}\leq S^{k}$ and  \\$S^{k+1}=\left(S^k\setminus S^k_{\sigma^S_{k+1}}\right)\cup \left\{T^{g(k)}(\ell_j,n_j):j\in X_{k+1}\right\}$.
    \item $X_0\subseteq succ_T(\sigma_0^S)$ and $X_{k+1}\subseteq succ_{T^{g(k)}(\ell_k,n_k)}(\sigma^S_{k+1})$.
    \item For all $\ell<\omega$ and $n\in X_k$, $T^{\ell}(k,n)$ decides $\dot h \upharpoonright \ell+1$ and
    \\$T^{\ell+1}(k,n)\leq_0 T^\ell(k,n)\leq_0 S^k_{{\sigma_k^S}^\frown n}$ 
    \item $f_k(n)(\ell)=i$ if and only if $T^\ell(k,n)\Vdash``\dot h(\ell)=i"$.
    \item $ f_k[X_k]\subseteq [B_k]$.
    \item $g(k+1)>g(k)$.
    \item For all $\bar h\in f_{k+1}[X_{k+1}]$, $\bar h\upharpoonright g(k)=f_{\ell_k}(n_k)\upharpoonright g(k)$.
    \item For all $k'>k, \,B_{k'}\cap 2^{\leq g(k)}\subseteq \bigcup_{i\leq k+1} B_i$.
    \item For all $t\in 2^{\leq k}$, there is $t'\supseteq t$ such that $t'\in 2^{\leq g(k)}$ and $ t'\notin \bigcup_{i\leq k}B_i$.
\end{enumerate}

Let us make some remarks: $S=\{\sigma: \sigma\subset  stem(T)\}\cup \{\sigma^S_{k}:k<\omega\}$ is, in fact, a condition because $succ_S(\sigma^S_k)=X_k\in\id^+$ for $k<\omega$. The succession mention in (iv) with property (5) can be obtained using the pure decision property. By conditions (3) and (5) we see that $S^{k+1}_{\sigma^{S}_{k+1}}$ decides $\dot h\upharpoonright g(k)$. Conditions (1) and (2) are given by properties (a) and (b). By (2), for all $k$ we have $n_k\in succ_{S^k}(\sigma^{S^k}_{\ell_k})$. We do not specify $n_k$ explicitly; it is determined by (2) and the enumeration of $S^k$.

%Given $k$, if we define $f'_k:succ_{T^{g(k)}(\ell_k,n_k)}(\sigma^S_{k+1})$ with the condition (6), since $f'_k$

%Once we have defined $\{B_i:i<k\}$

%\begin{displaymath}
%    f_0(n)(k)=i \text{ if and only if } T^k(0,n)\Vdash``\dot h(k)=i".
%\end{displaymath}

We start the recursion. Basic step $k=0$. We define $f_0':succ_T(stem(T))\rightarrow2^\omega$ as in (6).
Since $f_0'$ is not a Kat\v{e}tov function, there is $X_0\in \id^+\upharpoonright succ_T(stem(T))$ such that $f_0'[X_0]\subseteq [B_0]$ for some $B_0$ a nowhere dense tree. Take $f_0=f_0'\upharpoonright X_0$. Choose $t'$ such that $t'\not \in B_0$ and define $g(0)=|t'|$.

Recursion step. Assume that our construction has been done for all $i\leq k$. Now do it for $k+1$. Define $f'_{k+1}:succ_{T^{g(k)}(\ell_k,n_k)}(\sigma^S_{k+1}) \rightarrow2^\omega$ as in (6). Since $f_{k+1}'$ is not a Kat\v{e}tov function there is $X_{k+1}\in \id^+$ and $B_{k+1}'$ a nowhere dense tree such that if we define $f_{k+1}=f'_{k+1}\upharpoonright X_{k+1}$, then $f_{k+1}[X_{k+1}]\subseteq B_{k+1}$. We can prune $B_{k+1}'$ to $B_{k+1}$ such that if $t\in B_{k+1}$ has size at most $g(k)$, then there is $n\in X_{k+1}$ such that $f_{k+1}(n)\upharpoonright |t|= t$. Choose $g(k+1)$ in such a way that (11) is satisfied.

Now we need to prove that conditions (9) and (10) hold. To prove (9) take $\bar h\in f_{k+1}[X_{k+1}]$ and $j<g(k)$. Let $m\in X_{k+1}$ be such that $\bar h=f_{k+1}(m)$. By (6), $\bar h(j)=i$ if and only if $T^j(k+1,m)\Vdash``\dot h(j)=i"$. In the same way, $f_{\ell_k}(n_k)(j)=i$ if and only if $T^\ell(\ell_k,n_k)\Vdash``\dot h(j)=i"$.
Notice that by (2),(3) and (5)
\[T^j(k+1,m)\leq S^{k}_{{\sigma^S_{k+1}}^\frown m}=S^{k}_{{\sigma^S_{\ell_k}}^\frown n_k^\frown m}
\]
and  $m\in X_{k+1}\subseteq succ_{T^{g(k)}(\ell_k,n_k)}({\sigma^{S^k}_{\ell_k}}^\frown n_k)$, then we have 
\begin{displaymath}
    T^j(k+1,m)\leq T^{g(k)}(\ell_k,n_k)\leq_0 T^j(\ell_k,n_k).
\end{displaymath}
To prove (10), by the recursion hypothesis we just verify for $k'=k+1$ and $j\leq k$. Let $t\in B_{k+1}\cap2^{\leq g(j)}$. Take $t'\in B_{k+1}\cap2^{g(k)}$ with $t \subseteq t'$. By (9) $t'=f_{\ell_k}(n_k)\upharpoonright g(k)$. Thus $t=f_{\ell_k}(n_k)\upharpoonright |t|$. If $j\geq \ell_k$, $t\in B_{\ell_k}$ and we are done. If $j<\ell_k$ we know that $t\in B_{\ell_k}\cap 2^{g(j)}$. By inductive hypothesis $t\in \bigcup_{i\leq j} B_i$ and we are done.

\begin{claim}
$A=\bigcup\{B_k:k<\omega\}$ is a nowhere dense tree.
\end{claim}
\begin{proofclaim}
    Let $t\in A$, there is $k$ such that $|t|=k$. By (11) there is $t'\in 2^{<g(k)}\setminus (\bigcup_{\ell\leq k}B_{\ell})$. If there is some $k'>k$ with $t'\in B_{k'}$. By (10) there is $\ell\leq k$ with $t'\in B_\ell$. This is an absurd, then $t'\notin B_{k'}$ for all $k'>k$.
\end{proofclaim}

The proof ends when we show that $S\Vdash``\dot h \in [A]$". Searching for a contradiction assume that it does not hold. There are $j<\omega$ and $S'\leq S$ such that $S'\Vdash``\dot h\upharpoonright j\notin A"$. Let $k<\omega$ be such that $stem(S')=\sigma^{S}_k$. Let $k'>k$ be  such that $j<g(k')$ and $\sigma^S_{k'}\in S'$. Then, by (6) and (7),
\begin{displaymath}
    T^j(\ell_{k'}, n_{k'})\Vdash ``\dot h\upharpoonright j\in B_{k'}".
\end{displaymath}
and by(3), 
\[
S'_{\sigma^S_{k'}}\leq S_{\sigma^S_{k'}}\leq S^{k'+1}_{\sigma^S_{k'}}=S^{k'+1}_{{\sigma^S_{\ell_{k'}}}^\frown n_{k'}}\leq_0 T^{g(k')}(\ell_{k'},n_{k'})\leq_0  T^j(\ell_{k'}, n_{k'}).
\]
Thus, $S'_{\sigma^S_{k'}}\Vdash`` \dot h\upharpoonright j\in A"$ which is a contradiction.
\end{proof}

This finishes the study of Cohen reals in this section. Now we concentrate on random reals. In particular, we show that forcing notions of the form $\li$ do not add random reals. It is already known when $\id^+$ is an ultrafilter, since $\mathbb{L}(\id^+)$ is $\sigma$-centered.

Recall that given $f$ in $\omega^\omega$, we say that $f$ is a \emph{bounded eventually different real over} $V$ if there is $g\in V\cap \omega^\omega$ such that for all $n$, $f(n)\leq g(n)$ and for all $h\in V\cap \omega^\omega$, $h\cap f$ is finite (see \cite{barto}).

\begin{theorem}\label{bed}
    Let $\id$ be an ideal on $\omega$. $\li$ does not add bounded eventually different reals.
\end{theorem}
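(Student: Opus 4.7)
The plan is to show that for any $\li$-name $\dot f$ and any $T \in \li$ with $T \Vdash \dot f \in \omega^\omega$ and $T \Vdash \dot f \leq g$ for some $g \in V \cap \omega^\omega$, one can find $S \leq T$ and $h \in V \cap \omega^\omega$ such that $S \Vdash \exists^\infty n \, (\dot f(n) = h(n))$, which precludes $\dot f$ from being bounded eventually different over $V$. The main preliminary tool is a strengthened version of pure decision: by iterating the pure decision property of the Folklore proposition over the $\lceil \log_2(N+1) \rceil$ binary digits of $\dot x$, one shows that for every $\li$-name $\dot x$ with $T \Vdash \dot x \in \{0,\ldots,N\}$ there exists $S \leq_0 T$ and $v \leq N$ with $S \Vdash \dot x = v$.

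Next I fix an injection $\pi : \omega^{<\omega} \to \omega$ and build a fusion sequence $T = S_0 \geq S_1 \geq S_2 \geq \cdots$. The fusion order is $S \leq_n T$ iff $S \leq T$ and for every $\tau \in T$ with $|\tau| \leq |stem(T)| + n$ one has $\tau \in S$ and $succ_S(\tau) = succ_T(\tau)$; a routine check shows that whenever $S_{n+1} \leq_n S_n$ for every $n$, at each $\tau$ of $\bigcap_n S_n$ above the stem the successor set stabilizes by stage $|\tau| - |stem(T)|$ to a set in $\id^+$, so $\bigcap_n S_n \in \li$. At stage $n+1$, for every $\tau \in S_n$ with $|\tau| = |stem(T)| + n + 1$, I apply bounded decision to $(S_n)_\tau$ with respect to the name for $\dot f(\pi(\tau))$ (bounded by $g(\pi(\tau))$) to obtain $T'_\tau \leq_0 (S_n)_\tau$ and $v_\tau \leq g(\pi(\tau))$ with $T'_\tau \Vdash \dot f(\pi(\tau)) = v_\tau$. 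Then $S_{n+1}$ is defined by keeping $S_n$ on all nodes of length $\leq |stem(T)| + n + 1$ and replacing each $(S_n)_\tau$ by $T'_\tau$ above level $n+1$; this gives $S_{n+1} \leq_n S_n$ and $S_{n+1} \in \li$.

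Finally, set $S = \bigcap_n S_n \in \li$ and define $h \in \omega^\omega$ by $h(\pi(\tau)) = v_\tau$ for each $\tau$ above $stem(T)$ in $S$ (equivalently, each $\tau$ processed during the construction) and $h(k) = 0$ on the remaining positions; injectivity of $\pi$ makes $h$ well-defined. For any such $\tau$, the inclusion $S_\tau \subseteq T'_\tau$ forces $\dot f(\pi(\tau)) = v_\tau = h(\pi(\tau))$, so along the generic branch $x$ of $S$ the equality $\dot f(\pi(x \upharpoonright m)) = h(\pi(x \upharpoonright m))$ holds for every $m > |stem(T)|$; since $\pi$ is injective these positions are pairwise distinct, yielding infinitely many $n$ with $\dot f(n) = h(n)$, which contradicts $\dot f$ being bounded eventually different. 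The main delicate point is the fusion step: each level may contain infinitely many incomparable nodes, and one must verify that performing bounded decision simultaneously at all of them assembles into a condition in $\li$ with $\id^+$-splitting at every node. Incomparability of the $\tau$'s at a fixed level lets the $T'_\tau$'s glue disjointly above level $n+1$, and each $T'_\tau \in \li$ already carries $\id^+$-splitting at $\tau$ and above, making the fusion limit a genuine element of $\li$.
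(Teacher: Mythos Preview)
Your proof is correct and takes essentially the same approach as the paper: both construct $S\leq_0 T$ by a level-by-level (fusion) argument so that at each node $\tau$ above the stem the subtree $S_\tau$ decides the value of $\dot f$ at an index attached to $\tau$ (the paper uses an enumeration $m\mapsto\tau_m$ of the nodes, you use an injection $\pi:\omega^{<\omega}\to\omega$), and then define $h$ from the decided values so that the generic branch witnesses $|\dot f\cap h|=\omega$. Your write-up is in fact more careful than the paper's about the fusion machinery and about why the decisions made at stage $n+1$ survive in the limit $S$.
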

\begin{proof}
Let $T \in \li$, $f \in \omega^\omega$, and $\dot{r}$ an $\li$-name for a function such that 
$T \Vdash ``\dot{r} \leq f"$. Fix an enumeration $\{\tau_n : n < \omega\}$ of the nodes of $T$ below $stem(T)$.

We construct $T' \leq_0 T$ recursively on the levels of $T \setminus stem(T)$, 
with the goal that whenever $\tau_m \in T'$, the subtree $T'_{\tau_m}$ decides the value of $\dot{r}(m)$. 

At stage $0$, by the pure decision property, obtain $T'_0 \leq T_{stem(T)}$ such that $T'_0$ decides $\dot{r}(0)$. 
Assume that $T'$ has been constructed up to level $n$. 
For each $m < \omega$ such that $\tau_m$ lies at level $n$ of $T'$, 
the pure decision property yields a subtree $T'_m \leq_0 T_{\tau_m}$ deciding $\dot{r}(m)$. 
Then, we set $succ_{T'}(\tau_m) = succ_{T'_m}(\tau_m)$.

Once $T'$ is completely constructed, define a function $h \in \omega^\omega$ by letting $h(m) = k$ 
if and only if $T'_{\tau_m} \Vdash ``\dot{r}(m) = k"$. 
By construction, we have $T' \Vdash ``|\dot{r} \cap h| = \omega".$
\end{proof}

It is well known that adding a random real adds a bounded eventually different real.
In this way, we conclude the following:

\begin{coro}
    Let $\id$ be an ideal on $\omega$. $\li$ does not add random reals.
\end{coro}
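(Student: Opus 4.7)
The plan is to deduce the corollary in essentially one line from Theorem \ref{bed} together with the folklore fact, stated just before the statement, that every forcing which adds a random real also adds a bounded eventually different real. Concretely, suppose toward a contradiction that some $T \in \li$ and some $\li$-name $\dot r$ satisfy $T \Vdash ``\dot r \text{ is random over } V"$. Taking a generic filter $G$ with $T \in G$, the extension $V[G]$ then contains a random real over $V$, so by the cited fact $V[G]$ also contains a bounded eventually different real over $V$. Since $V[G]$ is an $\li$-extension of $V$, this directly contradicts Theorem \ref{bed}.

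For completeness I would include a brief justification of the folklore step rather than merely citing it. The standard argument fixes a ground-model $g \in \omega^\omega$ with $\sum_n 1/g(n) < \infty$, partitions $\omega$ into consecutive blocks $(I_n)_{n<\omega}$ whose lengths are chosen so that the restriction of a random $r \in 2^\omega$ to $I_n$ canonically codes a value $f_r(n) < g(n)$ uniformly distributed on $g(n)$, and then applies Borel--Cantelli: for every fixed $h \in V \cap \prod_n g(n)$, the set $\{r : f_r(n) = h(n) \text{ for infinitely many } n\}$ has Lebesgue measure zero. A random real avoids all these ground-model null sets simultaneously, so $f_r$ is bounded by $g$ and eventually different from every $h \in V \cap \omega^\omega$ (first truncating arbitrary $h$ against $g$), which is exactly the bounded eventually different real needed. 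The main combinatorial work of the corollary has already been carried out in Theorem \ref{bed}; there is no further obstacle, only the bookkeeping of the folklore reduction.
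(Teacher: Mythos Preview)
Your proof is correct and follows exactly the paper's approach: the corollary is deduced immediately from Theorem~\ref{bed} together with the well-known fact that adding a random real adds a bounded eventually different real. Your additional Borel--Cantelli sketch of the folklore reduction is accurate and goes slightly beyond what the paper provides, since the paper simply cites this fact as well known.
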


With this result, we finish this section.

\section{When does $\li$ have the Laver property?}

In this section, we investigate when the forcing $\mathbb{L}(\mathcal{I}^+)$ has the Laver property. While in Section~2 we characterized the addition of Cohen reals via the Kat\v{e}tov order, now our object is to describe the Laver property in a similar framework.% To do that first, we discus Laver property and define a family of ideals that captures the  Laver property.

We begin by recalling the definition of the Laver property and then proceed to define the ideals $\mathcal{L}_f$ for a certain type of functions $f$, proving the main theorem of the section, which provides a necessary and sufficient condition for $\mathbb{L}(\mathcal{I}^+)$ to have the Laver property. 
We establish some properties of ideals of the form $\lf$ and show that they are all Kat\v{e}tov equivalent to each other.

%We establish some properties of the ideals of the form $\lf$, proving that they are all  equivalent among themselves.

In contrast to Theorem \ref{FZ}, we show that not adding Cohen reals is not equivalent to having the Laver property to forcing notions of the form $\li$. In fact, we find an $F_\sigma$ ideal $\id$ such that $\mathbb{L}(\mathcal{I}^+)$ neither has the Laver property nor add Cohen reals. In addition, it is not equivalent even for $P$-points\footnote{ Recall that an ultrafilter is a $P$-point if every function on $\omega$ is finite-to-one or constant when it is restricted to some set in the ultrafilter. For more background see \cite{ultra}.}.

Finally, we extend the main result of this section to forcing notions to the form $\mil(\id^+)$. Also, for forcings of the form $\mil(\id^+)$, not adding Cohen reals is not equivalent to having the Laver property.

\begin{definition}
    Given $g\in \omega^\omega$, $F:\omega\rightarrow [\omega]^{<\omega}$ is a $(g)$-\emph{slalom} if for all $m<\omega$ $|F(m)|\leq g(m)$.
\end{definition}

\begin{definition}\label{LP}
    A forcing notion $\mathbb{P}$ has the \emph{Laver property} if for every $p\in \mathbb{P}$ and every $\mathbb{P}$-name $\dot{f}$ for a real in $\omega^\omega$ such that $p\Vdash ``\dot f\leq g"$ for some $g$ in the ground model, there exist a condition $q\leq p$ and a $(2^n)$-slalom $F$ in the ground model such that:
\[
q \Vdash ``\forall n\,\dot{f}(n) \in F(n)".
\]
\end{definition}

In the previous definition, it is equivalent:
\begin{enumerate}
    \item $F$ is a $(2^n)$-slalom;
    \item there is $h\in \omega^\omega$ a growing function such that $F$ is a $(h)$-slalom.
\end{enumerate} 

It is important to recall that $h$ cannot depend on $g$ or $\dot f$.

Now we introduce a family of ideals that captures the idea of the Laver property. They will be fundamental in the main theorem of this Section.

\begin{definition}
    Let 
    \[
    \mathcal{F}=\{f\in\omega^\omega: f \text{ is an increasing function and } \, f(n)\geq n^2 \text{ for all }n<\omega\}.
    \]
    Fix $f\in\mathcal{F}$. Define:
    \begin{enumerate}
        \item $\mathcal{P}_f=\{s\in \omega^{<\omega}: \forall i\in dom(s)\, (s(i)\leq f(i))\}$,
        \item Given $Z$ an $(n+1)$-slalom $\mathcal{C}_f(Z)=\{s\in \mathcal{P}_f:\forall i\in dom(s)\, (s(i)\in Z(i))\}$ and
        \item $\mathcal{L}_f$ is the ideal generated by $\{\mathcal{C}_f(Z): Z \text{ is an }(n+1)\text{-slalom}\}$.
    \end{enumerate}
\end{definition}

\begin{proposition}
    For all $f\in \mathcal{F}$, $\mathcal{L}_f$ is a proper tall ideal.
\end{proposition}
\begin{proof}
    Let $f\in\mathcal{F}$ and $n<\omega$. Take $Z_0,...,Z_n$ $(m+1)$-slaloms. Since $m^2\leq f$, there is $m<\omega$ such that $(n+1)(m+1)<f(m)$. Choose $s\in \mathcal{P}_f$ of length $m+1$ such that $s(m)\notin \bigcup_{i\leq n} Z_i(m)$. Thus $\mathcal{P}_f\in \lf$.

    Now, to see that $\lf$ is tall, we take $X\subseteq \mathcal{P}_f$ an infinite set. Since $\mathcal{P}_f$ is a finite branching tree, by König's lemma there is an infinite branch contained in $X$. Any branch belongs to $\mathcal{P}_f$.
\end{proof}

\begin{theorem}\label{Laver property}
    The forcing notion $\mathbb{L}(\mathcal{I}^+)$ has the Laver property if and only if for all $f\in\mathcal{F}$, for all $X\in\mathcal{I}^+$, $\mathcal{L}_f\not\leq_{K}\mathcal{I}\upharpoonright X$.
\end{theorem}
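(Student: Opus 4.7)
I will prove both directions by contrapositive, making heavy use of the pure decision and continuous reading properties of $\li$ recalled earlier. Two preliminary facts will be used throughout: first, for every $N$ the set $\{s\in\mathcal{P}_f:|s|\le N\}$ belongs to $\lf$ (it is a finite union of length-bounded trees, each coverable by finitely many $(n+1)$-slaloms), so by Katětov reduction we may always shrink an $\mathcal{I}$-positive $X$ to one on which $|\varphi(x)|$ is as large as required; second, the announced Katětov equivalence of all $\mathcal{L}_{f'}$ for $f'\in\mathcal{F}$ lets us freely swap between different functions in $\mathcal{F}$.

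For $(\Rightarrow)$, suppose $\varphi\colon X\to\mathcal{P}_f$ witnesses $\lf\le_K\mathcal{I}\upharpoonright X$. I propose the condition $T$ with $stem(T)=\emptyset$ and $succ_T(\tau)=X$ at every node, together with the name
\[
\dot h\;:=\;\varphi(\dot r(0))^\frown\varphi(\dot r(1))^\frown\varphi(\dot r(2))^\frown\cdots ,
\]
which is forced to be $\le f$ since $f$ is increasing and each $\varphi(\dot r(m))\in\mathcal{P}_f$. Toward a contradiction, suppose $S\le T$ and an $(n+1)$-slalom $Z$ satisfy $S\Vdash\dot h\in Z$. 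Writing $\sigma=stem(S)$, $k$ for the length of $\varphi(\sigma(0))^\frown\cdots^\frown\varphi(\sigma(|\sigma|-1))$, and $Y=succ_S(\sigma)\in\mathcal{I}^+$, we obtain $Y\subseteq\{y\in X:\varphi(y)\in\mathcal{C}_f(Z_k)\}$ with $Z_k(j):=Z(k+j)$. Iterating this at the deeper levels of $S$ (each yielding an $\mathcal{I}$-positive inclusion of successors into a shifted slalom) and assembling the accumulated constraints should produce a single element of $\lf$ whose $\varphi$-preimage contains $Y\in\mathcal{I}^+$, contradicting Katětov.

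For $(\Leftarrow)$, assume the Laver property fails with witnesses $T\in\li$, a name $\dot h$, and $g\in\omega^\omega$ with $T\Vdash\dot h\le g$. Pick $g'\in\mathcal{F}$ with $g\le g'$, and use the Katětov equivalence of $\lf$ and $\mathcal{L}_{g'}$ to work with $g'$. Using continuous reading of names, refine to $T'\le T$ in which every $\tau\in T'\setminus\{stem(T')\}$ decides $\dot h\upharpoonright n(\tau)$, where $n(\tau)=|\tau|-|stem(T')|$. Set $X:=succ_{T'}(stem(T'))\in\mathcal{I}^+$. For each $x\in X$, define $\varphi(x)\in\mathcal{P}_{g'}$ as the initial segment of $\dot h$ decided along a canonical branch of $T'$ extending $stem(T')^\frown x$, the choice being made coherently by a fusion-style recursion. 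To verify Katětov: if some $(n+1)$-slalom $Z$ had $Y:=\{x\in X:\varphi(x)\in\mathcal{C}_{g'}(Z)\}\in\mathcal{I}^+$, fusing the canonical branches above $Y$ would produce $S\le T'$ with $S\Vdash\dot h\in Z$, contradicting LP failure.

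The main technical obstacle is the \emph{shift issue} in $(\Rightarrow)$: the shifted slalom $Z_k$ has $|Z_k(j)|\le k+j+1$, so $\mathcal{C}_f(Z_k)$ is not itself in $\lf$ when $k\ge1$. Recovering a genuine member of $\lf$ requires combining the slalom constraints that arise at many levels of $S$, and this combinatorial step is where the real work lies; the preliminary observation about bounded-length sequences, plus possibly a judicious choice of $f'\in\mathcal{F}$ via Katětov equivalence, will be what compresses the accumulated shifts back into $\lf$. The parallel fusion in $(\Leftarrow)$, namely choosing the canonical branches defining $\varphi$ coherently so that an $\mathcal{I}^+$ Katětov-violating set can be lifted to an actual Laver condition forcing $\dot h\in Z$, is comparatively more routine but still delicate.
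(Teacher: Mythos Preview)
Your orientation of the two implications and your choice of the name $\dot h$ as the concatenation $\varphi(\dot r(0))^\frown\varphi(\dot r(1))^\frown\cdots$ in the ``Kat\v{e}tov $\Rightarrow$ LP fails'' direction are exactly what the paper does. But the paper does not leave the shift issue as an open obstacle, and it does not iterate to deeper levels. Given $T\le p$ with stem $\sigma$ and offset $j=\sum_{i<|\sigma|}|\varphi(\sigma(i))|$, the paper simply observes that since $|Z(m+j)|\le m+j+1\le (j+1)(m+1)$, one can split the shifted slalom as $Z(m+j)\subseteq\bigcup_{i\le j}Z_i(m)$ with each $Z_i$ an $(n+1)$-slalom; then $\bigcup_{i\le j}\mathcal{C}_f(Z_i)\in\lf$, so by positivity of $\varphi[succ_T(\sigma)]$ one finds $n\in succ_T(\sigma)$ and an $\ell$ with $\varphi(n)(\ell)\notin\bigcup_i Z_i(\ell)\supseteq Z(j+\ell)$, whence $T_{\sigma^\frown n}\Vdash\dot h(j+\ell)\notin Z(j+\ell)$. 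The entire argument takes place at the first splitting level of $T$; your proposed ``accumulation of constraints at many levels of $S$'' is an unnecessary detour.

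The serious gap is in your other direction. Defining $X:=succ_{T'}(stem(T'))$ and a single map $\varphi:X\to\mathcal{P}_{g'}$ via ``a canonical branch through $T'$ above $stem(T')^\frown x$'' cannot work: a Laver condition has uncountably many branches above each $x$, so one branch per $x$ cannot be fused into any $S\le T'$; and even granting that, a slalom capturing $\varphi[Y]$ controls only the finite initial segment of $\dot h$ decided at level one, not the whole of $\dot h$. What the paper does is essentially different in structure: it performs your ``non-Kat\v{e}tov $\Rightarrow$ positive set mapped into a single $\mathcal{C}_f(Z)$'' step \emph{at every node} $\tau$ of $T$, obtaining for each $\tau$ a positive $X_\tau\subseteq succ(\tau)$ and an $(n+1)$-slalom $Z_\tau$ with $\varphi_\tau[X_\tau]\subseteq\mathcal{C}_f(Z_\tau)$ (where $\varphi_\tau(m)$ is the initial segment of $\dot h$ decided by a pure-decision refinement below $\tau^\frown m$). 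It then fuses the $X_\tau$'s into a single $S\le_0 T$, enumerates the splitting nodes of $S$ as $(\sigma_k)_k$ so that $\min succ_S(\sigma_k)>k$, and pools the per-node slaloms into
\[
F(m)=\bigcup_{k\le m} Z_{\sigma_k}(m),
\]
which is an $((n+1)^2)$-slalom. A short case analysis (whether or not $S_{stem(S')}$ already decides $\dot h(m)$) then gives $S\Vdash\forall m\,\dot h(m)\in F(m)$. This per-node construction and the union of slaloms indexed by the enumeration of splitting nodes is the heart of the proof; a single-level $\varphi$ cannot produce a slalom that captures $\dot h$ at all coordinates.
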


\begin{proof}
$(\Rightarrow)$ By contrapositive. Let $X\in \id^+$, $\dot r_{gen}$ be the $\li$-name for the generic real and $\varphi:X\rightarrow \mathcal{P}_f$ be a Kat\v{e}tov function that witnesses $\mathcal{L}_f\leq_{K}\id\upharpoonright X$. Let $\dot h$ be the $\li$-name for the function
\[
\varphi(\dot r_{gen}(0))^{\frown}\varphi(\dot r_{gen}(1))^{\frown}\cdots.
\]
Also let $p=X^{<\omega}$. Fix $Z$ an $(n+1)$-slalom and $T\leq p$. Notice that $p\Vdash``\dot h\leq f"$. We need to prove that there is $T'\leq T$ such that $T'\Vdash``\exists m<\omega (\dot h(m)\notin Z(m))"$. Let $\sigma =stem (T)$ and $k=|\sigma|$. Notice that if 
\begin{displaymath}
\sum_{i<k}|\varphi(\sigma(i))|=j
\end{displaymath}
for some $j<\omega$, there are  $(n+1)$-slaloms $Z_0,...,Z_j$ such that for all $ m<\omega$ $Z(m+j)\subseteq \bigcup \{Z_i(m):i\leq j\}$. Since $\varphi(succ_T(\sigma))$ is positive, there is $n\in succ_T(\sigma)$ such that $\varphi(n)\notin \bigcup \{\mathcal{C}(Z_i):i\leq j\}$. Thus, there is $\ell$ such that $\varphi(n)(\ell)\notin \bigcup_{i<j}Z_i(\ell)$. Therefore,
\begin{displaymath}
    T_{\sigma^\frown n}\Vdash `` \dot h(j+\ell)=(\dot h\upharpoonright j ^\frown \varphi(n))(j+\ell)=\varphi(n)(\ell)\notin \bigcup_{i\leq j}Z_i(\ell)".
\end{displaymath}   
In particular $T_{\sigma^\frown n}\Vdash `` \dot h(j+\ell)\notin Z(j+\ell)"$.

$(\Leftarrow)$ Fix $T\in \li$, $f\in\omega^{\omega}$, and $\dot h$ a $\li$-name for a function such that $T\Vdash``\dot h\leq f"$. Increasing $f$ if necessary, we can take $f\in\mathcal{F}$. We can assume that $T$ decides $\dot h(0)$. Without loss of generality, we can assume that $T$ consists of increasing functions. Let $r\in\mathcal{P}_f$ be the longest function such that $T\Vdash`` \dot h\upharpoonright|r|=r"$.

Recursively, for each $\tau\in T$ with $stem(T)\subseteq \tau$, we construct:
\begin{enumerate}[(i)]
    \item $T^\tau\leq_0 T_\tau$,
    \item $X_{\tau}\subseteq succ_T(\tau)$,
    \item for all $m\in succ_T(\tau) $ a function $t_{\tau^\frown m}\in \omega^{<\omega}$,
    \item $Z_\tau$ an $(n+1)$-slalom and
    \item $\varphi_\tau:succ_{T^\tau}(\tau)\rightarrow\mathcal{P}_f$ a function,
\end{enumerate}
such that
\begin{enumerate}
    \item for all $m\in succ_{T^\tau}(\tau)$, $ T^\tau_{\tau^\frown m}\Vdash``\dot h \upharpoonright m+1 = t_{\tau^\frown m}" $,
    \item if $\sigma\leq_T \tau$, then $T^\sigma\leq T^\tau$,
    \item $X_\tau\in \id^+$,
    \item $\varphi_\tau(m)=t_{\tau^\frown m}$ and
    \item $\varphi_\tau[X_\tau]\subseteq \mathcal{C}_f(Z_\tau)$.
\end{enumerate}

Conditions (1),(3),(4) and (5) can be done in the same way for all nodes of $T$, therefore, we only do it for the stem of $T$ which we call $\sigma_0$: Start with $T=T^{\sigma_0}$. For each $m\in succ_T(\sigma_0)$, using the pure decision property, we can find $T^m\leq_0T_{\sigma_0^\frown m}$ and $t_{\sigma_0^\frown m}$ such that (1) holds. Define $\varphi_{\sigma_0}$ as in (4). Since $\varphi_{\sigma_0}$ is not a Kat\v{e}tov function, there are $X_{\sigma_0}$ and $Z_{\sigma_0}$ as in (3) and (5). To get (2) just do this process by levels.

It is important to note that if $\sigma\leq_T \tau$, then $t_\tau (i)=t_\sigma(i)$ for all $i<max(\tau)$.

Recursively, we can get $S\leq_0 T$ so that for all $\tau\in S$ we have $succ_S(\tau)=X_\tau$. 

Let $\{\sigma_k:k<\omega\}$ be  an enumeration of the splitting nodes of $S$ such that:
\begin{enumerate}
    \item $\sigma_k<\sigma_\ell$ implies $k<\ell$ and
    \item if $s$ is such that $\sigma_k=\sigma^\frown n$, $\sigma_\ell=\sigma^\frown m$ and $n<m$, then $k<\ell$.
\end{enumerate}

Moreover, we can use the pure decision property to prune $S$ so that for each $\ell<\omega$ the condition $S_{\sigma_\ell}$ decides $\dot h\upharpoonright \ell+1$ and $\min(succ_S(\sigma_\ell))>\ell$.

We define a $((n+1)^2)$-slalom as follows: for each $m<|r|$, let $F(m)$ be $\{r(m)\}$. For $m\geq |r|$, let $F(m)= \bigcup\{ Z_{\sigma_k}(m): k\leq m\}$.

We claim that $S\Vdash`` \forall m \, \dot h(m)\in F(m)$". To prove it, proceed by contradiction. Let $S'\leq S$ and $m\in \omega$ be such that $S'\Vdash`` \dot h(m)\notin F(m)"$. Notice that $m>|r|$. Let $\sigma '=stem(S')$.

We distinguish two cases:

Case $S_{\sigma'}$ decides $\dot h(m)$. Let $j$ be the largest $j$ such that $S_{\sigma'\upharpoonright j}$ does not decide $\dot h(m)$. Let $\ell$ be such that $\sigma'\upharpoonright j=\sigma_\ell$. Then $\ell<m$, but 
\begin{displaymath}
 S'\leq S_{\sigma'\upharpoonright j+1}\Vdash ``\dot h(m)\in Z_{\sigma'\upharpoonright j}(m)\subseteq F(m)"
\end{displaymath}
that is a contradiction.

Case $S_{\sigma'}$ does not decide $\dot h(m)$. Choose $k\in succ_{S'}(\sigma')$  with $m\leq k$ and let $\ell$ be such that $\sigma'=\sigma_\ell$. Then $\ell<m$ and
\begin{displaymath}
   S'_{\sigma'^\frown k}\leq S_{\sigma'^\frown k}\Vdash ``\dot h(m)\in Z_{\sigma'}(m)\subseteq F(m)". 
\end{displaymath}
Again, we get a contradiction.
\end{proof}

Recently and independently, Horvath and Özalp in \cite{laverultrafilters} proved a characterization of when $\mathbb{L}(\mathcal{U})$ has the Laver property in the case where $\mathcal{U}$ is an ultrafilter.

Now, we study briefly the ideals $\lf$ and get some corollaries from Theorem \ref{LP}.

\begin{proposition}\label{n2max}
    For all $f,g\in \mathcal{F}$, if $f\leq g$, $\mathcal{L}_g \leq_K \mathcal{L}_f$.
    %For all $f,g\in \mathcal{F}$, $f\leq g$ implies $\mathcal{L}_f \equiv_K \mathcal{L}_g$.
\end{proposition}

\begin{proof}
    Since $f(n)\geq n^2$ for all $n<\omega$, we have $\mathcal{P}_f\in\mathcal{L}_g^+$. Even more, we can verify that $\mathcal{L}_f = \mathcal{L}_g\upharpoonright \mathcal{P}_f$.
    %First, consider the case $f\leq g$. Then $\mathcal{P}_f\subseteq \mathcal{P}_g$. Note that $\mathcal{L}_f \subseteq \mathcal{L}_g$, thus $\mathcal{L}_f\leq_K \mathcal{L}_g$. On the other hand, since $f(n)\geq n^2$ for all $n<\omega$, we have $\mathcal{P}_f\in\mathcal{L}_g^+$. Even more, we can verify that $\mathcal{L}_f = \mathcal{L}_g\upharpoonright \mathcal{P}_f$.
    
    %Now, if $f\nleq g$, we consider $h=\max(f,g)$ and apply the previous case.
\end{proof}

%By the last proposition, we can fix $f$ in $\mathcal{F}$ and work with $\lf$. For the rest of the section, set $f(n)=n^2$.

\begin{proposition}
    For all $f\in\mathcal{F}$, $\mathcal{L}_f$ is Kat\v{e}tov below of $\nwd$.
\end{proposition}

\begin{proof}
   Let $f \in \mathcal{F}$. Define $\varphi:\omega^{<\omega} \to \mathcal{P}_f$ recursively as follows. Set $\varphi(\emptyset)=\emptyset$. Suppose that $s$ has length $n$ and that $\varphi(s)$ has already been defined. For each $k<\omega$, define
\[
\varphi(s^\frown k)=\varphi(s)^\frown k',
\]
where $k' < f(|s|)$ is the unique integer such that $k \equiv k' \pmod{f(|s|)}$.

To see that this function is a Kat\v{e}tov witness, fix an $(n+1)$-slalom $Z$ and take $s \in \varphi^{-1}(\mathcal{C}_f(Z))$. We may assume that $|s|>0$. Let $k<f(|s|)$ be such that $k\notin Z(|s|)$. Then every $t$ extending $\varphi(s)^\frown k$ does not belong to $\mathcal{C}_f(Z)$. Therefore,
\[
\langle s^\frown k\rangle \cap \varphi^{-1}(\mathcal{C}_f(Z))=\emptyset.
\]    %By Theorem \ref{Cohen reals}, $\lav(\nwd^+)$ adds Cohen reals, therefore, it does not have the Laver property. By Theorem \ref{Laver property}, there are $g$ and $X\in \nwd^+$ such that $\mathcal{L}_f\leq_K \nwd\upharpoonright X\equiv_K \nwd$.
\end{proof}

To find an ideal $\id$ such that $\li$ does not have the Laver property nor add Cohen reals, the following must hold:
\begin{enumerate}
    \item for some and $X\in \id^+$, $\lf\leq_K \id\upharpoonright X$ and
    \item for all $X\in \id^+$, $\nwd\not\leq_K \id\upharpoonright X$.
\end{enumerate}

The next proposition points out that  $\lf$ is a good candidate for $\id$. This proof is inspired by the proof of Theorem 5.3 in \cite{fullmil}.

\begin{proposition}\label{nwd vs lf}
     For all $f\in \mathcal{F}$, $\nwd\not \leq_K \lf$.
\end{proposition}

To prove this, we need some preliminary lemmas. In particular, the following one corresponds to Theorem 1.4 in \cite{rationals}, which, as the authors note, is a reformulation of a result from \cite{Keremedis}.

\begin{lemma}[Keremedis]\label{kere}
    \[\add(\mathcal{M})=\min \{|\mathcal{A}|: \mathcal{A}\subseteq \nwd \wedge \forall D \text{ dense subset of }\mathbb{Q}\, \exists A\in\mathcal{A} (A\cap D \not=^*\emptyset)\}. \]
\end{lemma}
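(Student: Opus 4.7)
The strategy is to reduce the statement to Bartoszy\'nski's combinatorial characterization of $\add(\mathcal{M})$: this cardinal equals the minimum size of a family $F\subseteq \omega^{\omega}$ such that for every $g\in \omega^{\omega}$ there exists $f\in F$ with $\{n:f(n)=g(n)\}$ infinite. With this tool available, one builds an explicit combinatorial translation between ``dense subsets of $\mathbb{Q}$ vs.\ nowhere dense subsets of $\mathbb{Q}$'' on one side, and ``functions $g\in \omega^{\omega}$ vs.\ functions $f\in \omega^{\omega}$'' on the other, so that ``infinite intersection'' corresponds to ``infinite agreement''.

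To set up the encoding, fix a countable base $(I_n)_{n<\omega}$ of nonempty open intervals of $\mathbb{Q}$ in which each basic open set appears infinitely often, and for each $n$ fix an enumeration $I_n\cap\mathbb{Q}=\{q_{n,k}:k<\omega\}$. Associate to every $f\in \omega^{\omega}$ the set $A_f=\{q_{n,f(n)}:n<\omega\}$; a short argument using that $A_f$ contains at most one point of each $I_n$ shows $A_f\in \nwd$. Associate to every dense $D\subseteq \mathbb{Q}$ the ``trace'' $S_D(n)=\{k:q_{n,k}\in D\}$, which is infinite by density. One then checks the key equivalence
\[
|A_f\cap D|=\omega \;\Longleftrightarrow\; \{n:f(n)\in S_D(n)\} \text{ is infinite.}
\]

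For the inequality $\kappa_0\le \add(\mathcal{M})$, let $F$ witness Bartoszy\'nski. Given any dense $D\subseteq \mathbb{Q}$, pick any selector $g\in \omega^{\omega}$ with $g(n)\in S_D(n)$; then there is $f\in F$ with $f=^{\infty}g$, so $A_f\cap D$ is infinite, and $\{A_f:f\in F\}$ witnesses the Keretov-style invariant. For the reverse inequality $\add(\mathcal{M})\le \kappa_0$, let $\mathcal{A}$ witness $\kappa_0$; I would argue by contradiction. Assuming $\kappa_0<\add(\mathcal{M})$, pass to closures $\overline{A}$ in $\mathbb{R}$ (each still nowhere dense) and work inside the Polish space $X=\prod_n W_n$, where $W_n$ enumerates rational-endpoint open intervals of $\mathbb{R}$ with each appearing infinitely often. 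For each $A\in \mathcal{A}$ the set $M_A=\{(d_n)\in X:\exists^{\infty}n\ d_n\in \overline{A}\}$ is meager in $X$; since $|\mathcal{A}|<\add(\mathcal{M})$, the union $\bigcup_{A\in \mathcal{A}}M_A$ is still meager. A comeager set of sequences $(d_n)\in X$ with $d_n\in W_n\cap \mathbb{Q}$ for all $n$ then yields, after a diagonal choice, a dense $D=\{d_n\}\subseteq \mathbb{Q}$ with $|D\cap A|<\omega$ for every $A\in \mathcal{A}$, contradicting the choice of $\mathcal{A}$.

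The main obstacle is this last diagonalization: guaranteeing that the chosen sequence $(d_n)$ lies in $\mathbb{Q}^{\omega}$ rather than just $\mathbb{R}^{\omega}$ while simultaneously avoiding each $\overline{A}$ from some stage on. One handles this by choosing $W_n$ with rational endpoints, replacing $X$ by the Polish subspace obtained from the Ellentuck-style topology on $\prod(W_n\cap \mathbb{Q})$ viewed through a canonical enumeration, or by constructing $(d_n)$ recursively picking each $d_n\in W_n\cap \mathbb{Q}$ to lie outside the first $n$ sets from a well-ordered enumeration of $\mathcal{A}$ once the comeager set is available; the input that makes this work is precisely $\kappa_0<\add(\mathcal{M})$ and the fact that $\mathbb{Q}$ is dense in every $W_n$.
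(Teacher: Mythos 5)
You should first note that the paper does not prove this lemma at all: it is quoted as Theorem 1.4 of the reference \cite{rationals} (itself a reformulation of a result of Keremedis), so your attempt has to stand on its own, and as written it has two independent gaps, one in each direction. Write $\kappa_0$ for the right-hand side. For $\kappa_0\le\add(\mathcal{M})$: the characterization you invoke --- the least size of a family $F\subseteq\omega^\omega$ such that every $g$ agrees infinitely often with some $f\in F$ --- is Bartoszy\'nski's characterization of $\non(\mathcal{M})$, not of $\add(\mathcal{M})$ (recall $\add(\mathcal{M})=\min(\mathfrak{b},\cov(\mathcal{M}))$, and $\add(\mathcal{M})<\non(\mathcal{M})$ is consistent, e.g.\ in the Laver model), so even if the rest worked you would only obtain $\kappa_0\le\non(\mathcal{M})$, which is strictly weaker than what the lemma asserts. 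Moreover the encoding itself fails: since each basic interval occurs as some $I_n$ and $q_{n,f(n)}\in I_n$, the set $A_f=\{q_{n,f(n)}:n<\omega\}$ meets every basic interval and is therefore \emph{dense} in $\mathbb{Q}$; it is never nowhere dense, so $\{A_f:f\in F\}$ is not even a subfamily of $\nwd$, and the "short argument" you allude to cannot exist (a set with one point in each member of a base is dense). The displayed equivalence is also only one-sided, because the points $q_{n,f(n)}\in D$ may repeat.

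For $\add(\mathcal{M})\le\kappa_0$ the gap is exactly the one you flag, and it is not repairable along the lines you suggest. The comeager set you produce lives in $X=\prod_n W_n$, while $\prod_n(W_n\cap\mathbb{Q})$ is meager in $X$ and is itself not a Baire space, so comeagerness gives no rational selector; choosing each $d_n$ outside "the first $n$ sets" of a well-ordering of $\mathcal{A}$ only avoids countably many members of an uncountable family. In fact no purely Baire-category argument over rational selectors can give this direction: if you parametrize dense subsets of $\mathbb{Q}$ by $f\in\omega^\omega\mapsto\{q_{n,f(n)}:n<\omega\}$, then for any fixed infinite nowhere dense $A\subseteq\mathbb{Q}$ the set of $f$ whose associated dense set meets $A$ infinitely often is comeager in $\omega^\omega$, so the "generic" dense $D$ does the opposite of what you need. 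This reflects the fact that the inequality genuinely uses the $\mathfrak{b}$-component of $\add(\mathcal{M})=\min(\mathfrak{b},\cov(\mathcal{M}))$: the known proofs, like the paper's own argument in Section 6 that $\add(\mathcal{M})\le\add^*_\omega(\nwd)$, go through chopped reals and a $\leq^*$-bound on fewer than $\mathfrak{b}$ many interval partitions, an ingredient entirely absent from your sketch.
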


%$\hfill{ \square }$

The following lemma is a direct consequence of Lemma \ref{kere} and the fact that $\mathbb{Q}$ as topological space has a countable base.

\begin{lemma}\label{swd} $\add(\mathcal{M})$ is equal to
    \[\min \{|\mathcal{A}|: \mathcal{A}\subseteq \nwd \wedge \forall D \text{ somewhere dense subset of }\mathbb{Q}\, \exists A\in\mathcal{A} (A\cap D \not=^*\emptyset)\}.\]
\end{lemma}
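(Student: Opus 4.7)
The approach is to sandwich the right-hand side of Lemma~\ref{swd} between two applications of Lemma~\ref{kere}, using that $\mathbb{Q}$ has a countable base of open sets each of which is homeomorphic to $\mathbb{Q}$ itself.

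For the inequality $\add(\mathcal{M})\le\mathrm{RHS}$, since every dense subset of $\mathbb{Q}$ is somewhere dense, any family $\mathcal{A}\subseteq\nwd$ witnessing the minimum in Lemma~\ref{swd} automatically witnesses the minimum in Lemma~\ref{kere}; so Lemma~\ref{kere} gives the bound at once. This direction is immediate.

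For the reverse inequality I would fix a countable base $\{U_n:n<\omega\}$ of nonempty open subsets of $\mathbb{Q}$. Each $U_n$, being a nonempty countable metric space without isolated points, is homeomorphic to $\mathbb{Q}$ by Sierpi\'nski's theorem; choose homeomorphisms $\phi_n:\mathbb{Q}\to U_n$. Starting from a family $\mathcal{A}\subseteq\nwd$ of size $\add(\mathcal{M})$ witnessing Lemma~\ref{kere}, I would set
\[
\mathcal{A}'=\{\phi_n[A]:n<\omega,\ A\in\mathcal{A}\}.
\]
Each $\phi_n[A]$ is nowhere dense in $U_n$, hence nowhere dense in $\mathbb{Q}$, so $\mathcal{A}'\subseteq\nwd$; and since $\add(\mathcal{M})$ is uncountable, $|\mathcal{A}'|=|\mathcal{A}|=\add(\mathcal{M})$.

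To show that $\mathcal{A}'$ witnesses the somewhere-dense minimum, let $D\subseteq\mathbb{Q}$ be somewhere dense and pick $n$ with $U_n\subseteq\operatorname{int}(\overline{D})$, so $D\cap U_n$ is dense in $U_n$. Then $\phi_n^{-1}[D\cap U_n]$ is dense in $\mathbb{Q}$, and the witnessing property of $\mathcal{A}$ supplies $A\in\mathcal{A}$ with $A\cap\phi_n^{-1}[D\cap U_n]$ infinite. Applying $\phi_n$ gives $\phi_n[A]\cap D$ infinite, i.e., $\phi_n[A]\cap D\neq^*\emptyset$, as required.

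The main (minor) obstacle is just the transfer along $\phi_n$: one must verify that ``nowhere dense'' and ``dense'' behave correctly under homeomorphisms between $\mathbb{Q}$ and its basic open subsets, and that enlarging $\mathcal{A}$ countably many times does not change its cardinality. Both points use only that $\nwd$ is topologically invariant and that $\add(\mathcal{M})\ge\aleph_1$, so the argument reduces cleanly to Lemma~\ref{kere}.
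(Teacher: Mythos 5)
Your proof is correct and follows exactly the route the paper indicates (the paper only remarks that the lemma is a direct consequence of Lemma~\ref{kere} and the countable base of $\mathbb{Q}$): you reduce a somewhere dense $D$ to a set dense in some basic open $U_n\cong\mathbb{Q}$, pull back via a homeomorphism to apply Lemma~\ref{kere}, and close the family under the countably many homeomorphisms, which is harmless since $\add(\mathcal{M})\ge\aleph_1$. The easy inequality via ``dense implies somewhere dense'' is also handled correctly, so nothing is missing.
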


\begin{lemma}(Batroszynski,\cite{barto})\label{barto cov}
    Let $\mathcal{A}\subseteq \omega^\omega$. The following are equivalent:
    \begin{enumerate}
        \item $|\mathcal{A}|<\add(\mathcal{M})$,
        \item there is $c\in\omega^\omega$ such that for all $f\in \mathcal{A}$ for infinitely many $n$ holds $f(n)=c(n)$.
    \end{enumerate}
\end{lemma}

Inspired by Lemma 5.5 in \cite{fullmil}, we have the following lemma.

\begin{lemma}\label{a<c}
    If $\add(\mathcal{M})<\cov(\mathcal{M})$, then $\nwd\not \leq_K \lf$. 
\end{lemma}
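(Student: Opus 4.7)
The argument is by contradiction. Suppose $\varphi : \mathcal{P}_f \to \mathbb{Q}$ witnesses $\nwd \leq_K \lf$, and set $\kappa := \add(\mathcal{M})$, so $\kappa < \cov(\mathcal{M})$. By Lemma~\ref{swd}, fix $\{A_\alpha : \alpha < \kappa\} \subseteq \nwd$ such that every somewhere dense $D \subseteq \mathbb{Q}$ has $D \cap A_\alpha$ infinite for some $\alpha$. For each $\alpha$, $\varphi^{-1}[A_\alpha] \in \lf$; amalgamating the finitely many $(n+1)$-slaloms appearing in a generating cover into a single slalom $Z_\alpha$ with $|Z_\alpha(n)| \leq k_\alpha(n+1)$, we have $\varphi^{-1}[A_\alpha] \subseteq \mathcal{C}_f(Z_\alpha) \cup F_\alpha$ for some finite $F_\alpha$.

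In the Polish space $X := \prod_{n<\omega} (n^2+1)$, each set $[Z_\alpha] := \{g \in X: \forall n,\, g(n) \in Z_\alpha(n)\}$ is closed nowhere dense, since $|Z_\alpha(n)|/(n^2+1) \to 0$. Because $\kappa < \cov(\mathcal{M})$, the union $\bigcup_{\alpha<\kappa}[Z_\alpha]$ does not cover $X$, so one may pick $g \in X\setminus\bigcup_{\alpha<\kappa}[Z_\alpha]$; for each $\alpha$, let $n_\alpha$ be the least $n$ with $g(n) \notin Z_\alpha(n)$. Define the \emph{one-off} set
\[
S_g := \{\, s \in \mathcal{P}_f : s(i) = g(i)\text{ for all } i < |s|-1,\ s(|s|-1) \neq g(|s|-1)\,\}.
\]

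Two claims about $S_g$ then finish the proof. First, $S_g \in \lf^+$: at level $n\geq 2$ it has $(n-1)^2$ elements, while any finite union $\bigcup_{j<k}\mathcal{C}_f(W_j)$ of $(n+1)$-slalom generators can cover at most $kn$ admissible last coordinates (only those $j$ whose $W_j$ agrees with $g$ on the first $n-1$ coordinates contribute, and each contributes at most $|W_j(n-1)|\leq n$ values), so for $n$ large relative to $k$ the level is not covered, producing infinitely many $s\in S_g$ outside the union. Second, $S_g \cap \mathcal{C}_f(Z_\alpha)$ is finite for every $\alpha$: such an $s$ forces $g(i)\in Z_\alpha(i)$ for all $i<|s|-1$, hence $|s|\leq n_\alpha+1$, and each relevant level contributes at most $|Z_\alpha(|s|-1)|\leq k_\alpha|s|$ elements. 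Applying the contrapositive of the Kat\v{e}tov property to $S_g\in\lf^+$ yields that $\varphi[S_g]$ is somewhere dense in $\mathbb{Q}$, while $S_g\cap \varphi^{-1}[A_\alpha] \subseteq (S_g\cap \mathcal{C}_f(Z_\alpha))\cup F_\alpha$ is finite for every $\alpha$, so $D:=\varphi[S_g]$ is a somewhere dense subset of $\mathbb{Q}$ with $|D\cap A_\alpha|<\omega$ for all $\alpha<\kappa$, contradicting Lemma~\ref{swd}.

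The main obstacle is choosing the correct set $S_g$. The naive candidate $\{g\restriction n: n<\omega\}$ has finite intersection with each $\mathcal{C}_f(Z_\alpha)$ but always lies in $\lf$ (it is covered by the $1$-slalom $n\mapsto\{g(n)\}$), so it yields no information via the Kat\v{e}tov property. The one-off variant above replaces only the last coordinate by any admissible value different from $g$'s, which inflates each level quadratically—enough to escape every finite union of $(n+1)$-slaloms—while still inheriting from $g$ the initial agreement with each $Z_\alpha$ that forces the finite-intersection property with $\mathcal{C}_f(Z_\alpha)$. Balancing these two requirements is what makes the construction work and is what the hypothesis $\kappa<\cov(\mathcal{M})$ is used for, through the existence of $g$ outside $\bigcup_\alpha [Z_\alpha]$.
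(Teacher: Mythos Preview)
Your proof is correct and follows the same overall skeleton as the paper's: use Lemma~\ref{swd} to fix a small $\nwd$-family $\{A_\alpha:\alpha<\kappa\}$, exploit $\kappa<\cov(\mathcal{M})$ to find a ``generic'' object avoiding the slaloms coding $\varphi^{-1}[A_\alpha]$, and from that object build an $\lf$-positive set whose intersection with every $\varphi^{-1}[A_\alpha]$ is finite, contradicting the choice of the family.

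The constructions differ, however. The paper keeps the slaloms $Z_i^\alpha$ separate, passes to even coordinates via maps $\rho,\bar\rho$, and chooses a Cohen real $c$ (an $(n+1)$-slalom) for the family $\{\bar\rho(Z_i^\alpha)\}$; the positive set is $T^c=\{s:\rho(s)\notin\mathcal{C}_f(\bar\rho(c))\}$, and a further dense subset $D\subseteq T^c$ with one element per odd level is extracted before reaching the contradiction. You instead amalgamate the slaloms into a single $Z_\alpha$, view $[Z_\alpha]$ as a closed nowhere dense subset of the compact product $\prod_n(n^2+1)$, and pick a single point $g$ outside $\bigcup_\alpha[Z_\alpha]$; your positive set $S_g$ is the ``one-off'' set of sequences agreeing with $g$ except at the last coordinate. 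Your route is more direct: no even/odd shift and no secondary thinning are needed, and the verification that $S_g\in\lf^+$ (quadratic level size versus linear slalom coverage) and that $S_g\cap\mathcal{C}_f(Z_\alpha)$ is finite (bounded length by the first disagreement $n_\alpha$) are both transparent counting arguments. The paper's Cohen-real formulation, on the other hand, makes the genericity more explicit and is the template later reused in Proposition~\ref{nwd vs antich}.
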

\begin{proof}
        Proceeding by contradiction, assume that there is $\varphi:\mathcal{P}_f\rightarrow \mathbb{Q}$ a Kat\v{e}tov function. Let $\{N_\alpha:\alpha\in \add{(\mathcal{M})}\}\subseteq\nwd$ be a  family such that for all $D\subseteq \mathbb{Q}$ somewhere dense there is $\alpha<\add{(\mathcal{M})}$ such that $D\cap N_\alpha$ is infinite. Thus,
    \begin{displaymath}
        \{\varphi^{-1}[N_\alpha]:\alpha<\add{(\mathcal{M})}\} \subseteq \lf.
    \end{displaymath}
    For each $\alpha$, let $n_\alpha<\omega$, and let $Z_0^\alpha, \dots, Z_{n_\alpha}^\alpha$ be $(n+1)$-slaloms such that
    \begin{displaymath}
        \varphi^{-1}[N_\alpha] \subseteq \bigcup_{i\leq n_\alpha} \mathcal{C}_f(Z_i^\alpha).
    \end{displaymath}
    Let $\rho:\omega^{\leq \omega}\rightarrow\omega^{\leq \omega}$ be a function such that $\rho(g)(n)=g(2n)$. In the same way, let $\bar\rho: ([\omega]^{<\omega})^\omega \rightarrow ([\omega]^{<\omega})^\omega$ be a function defined as follows: $\bar\rho(Z)(n)=Z(2n)$ for  $Z\in([\omega]^{<\omega})^\omega$.

    We consider the family $\{\bar \rho(Z^\alpha_i):\alpha<\add(\mathcal{M})\wedge i\leq n_\alpha\}$. By Lemma \ref{barto cov}, there exists $c:\omega\rightarrow [\omega]^{<\omega}$ such that for all $\alpha<\add(\mathcal{M})$ and $i\leq n_\alpha$, there are infinitely many $n$ such that $c(n)=\bar\rho(Z^\alpha_i)(n)$. Without loss of generality, $c$ is an $(n+1)$-slalom. We define 
    \begin{displaymath}
        T^{c}=\{s\in \mathcal{P}_f: \rho(s)\notin \mathcal{C}_f(\bar\rho(c)) \}.
    \end{displaymath}

    \begin{claim}
        $T^c\in \lf^+$.
    \end{claim}
    \begin{proofclaim}
        Let $Z_0,...,Z_n$ be $(m+1)$-slaloms. Choose $s\in\mathcal{P}_f$ such that for all $i\leq n$ there is $m_i<|s|$ such that $s\notin Z_i(m_i)$. Let $s'\supseteq s$ be such that $|s'|=2|s|+1$ and $s'(2|s|)\not\in c(2|s|)$. Thus, $s'\in T^c$ and it is a witness of $T^c\not\subseteq\bigcup_{i\leq n}\mathcal{C}_f(Z_i)$.
    \end{proofclaim}

    Let $D\subseteq T^c$ be a $(\supseteq)$-dense subset of  $T^c$ such that $|\{s\in D: |s|=2n+1\}|=1$ for all $n<\omega$. By density, $D\in\lf^+$. We claim that $\varphi [D]\cap N_\alpha$ for all $\alpha<\add{\mathcal{M}}$ is finite. Fix $\alpha$, notice that
    \[
    \varphi[D]\cap N_\alpha =\varphi[D\cap \varphi^{-1}[N_\alpha]]\subseteq \varphi[D\cap \bigcup_{i\leq n}\mathcal{C}_f(Z_i^\alpha)] = \varphi[\bigcup_{i\leq n}(D\cap \mathcal{C}_f(Z_i^\alpha))].
    \]

    Now, fix $i\leq n_\alpha$. It is enough to see that $D\cap \mathcal{C}_f(Z_i^\alpha)$ is finite. By the property of $c$ there is $n$ such that $c(n)=\bar\rho(Z^\alpha_i)(n)=Z^\alpha_i(2n)$. On the other hand, by the definition of $T^c$,if $s\in D\subseteq T^c$ and $|s|\geq2n+1$, then $s(2n)\not\in c(n)=Z_i^\alpha(2n)$. Therefore, $s\notin \mathcal{C}_f(Z_i^\alpha)$. Thus, $D\cap \mathcal{C}_f(Z_i^\alpha)$ contains only functions of length at most $2n$. The contradiction comes from the fact that $\varphi[D]$ is somewhere dense.

\end{proof}

\begin{proof}[Proof of Proposition \ref{nwd vs lf}.]
    Since $\nwd$ is a Borel ideal and $\lf$ is analytic, the statement 
    \[\forall \varphi:\mathcal{P}_f\rightarrow \mathbb{Q} \exists A\in\nwd (\neg (\varphi^{-1}[A]\in \lf))\]
    is a $\Pi^1_3$ statement. We call it $\phi$.
    Let $G$ be a generic filter for the product of $\omega_2$ many copies of Cohen forcing after forcing $\CH$. In $V[G]$, $\add(\mathcal{M})<\cov(\mathcal{M})$ holds. By Lemma \ref{a<c}, $V[G]\models \phi$. By the downward absolute between forcing extensions (Shoenfield's absolute theorem), we have $V\models \phi$.
\end{proof}

Unfortunately, we do not know if $\nwd\not\leq_K \lf\upharpoonright X$ for all $X\in \lf^+$ and all $f\in \mathcal{F}$. However, for all $f$ there is a restriction of $\lf$ with this property. Fix $f\in \mathcal{F}$ and define
\[
Y=\{s\in\mathcal{P}_f: \forall i< |s|-1 (s(i)=0 \wedge s(|s|-1)\leq |s|(|s|-3)) \}.
\]

Notice that $Y\in\lf^+$, even more $Y$ satisfies the next property: for all $n<\omega$ there is $m<\omega$  such that $|\{s(m):m<|s| \wedge s\in Y\}|=n(m+1)$. To see this, for  each $n$ take $m=n+2$. Thus, for all $s\in Y$ with $n+2<|s|$, $s(n+2)$ is $0$ when $|s|\neq n+3$. If $|s|= n+3$
\begin{displaymath}
    s(n+2)\leq |s|(|s|-3))=(n+3)(n)=((n+2)+1)n=(m+1)n.
\end{displaymath}
 This property implies that $Y$ cannot be covered with $n$ many $(m+1)$-slaloms for all $n$.

\begin{lemma}\label{positive f sigma}
    For all $X\subseteq Y$, $X\in (\lf\upharpoonright Y)^+$ if and only if for all $n<\omega$ there is $m<\omega$  such that $|\{s(m):m<|s| \wedge s\in X\}|>n(m+1)$.
\end{lemma}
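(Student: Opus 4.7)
The key structural fact driving the argument is that any $s \in Y$ is zero at every coordinate except possibly the last, so the column sets $C_m^X := \{s(m) : m < |s| \wedge s \in X\}$ essentially record the distribution of the final values of elements of $X$, and any $(m+1)$-slalom $Z$ with $0 \in Z(j)$ for all $j$ captures $s \in Y$ in $\mathcal{C}_f(Z)$ precisely when $s(|s|-1) \in Z(|s|-1)$.

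For the $(\Leftarrow)$ direction I proceed by contradiction. Assume $X \in \lf$, so $X \subseteq \bigcup_{i \leq k} \mathcal{C}_f(Z_i)$ for some $k$ and some $(m+1)$-slaloms $Z_1, \dots, Z_k$. Every $v \in C_m^X$ arises as $s(m)$ for some $s \in X$, and such an $s$ must lie in some $\mathcal{C}_f(Z_i)$, whence $v \in Z_i(m)$. Therefore $|C_m^X| \leq k(m+1)$ uniformly in $m$, contradicting the hypothesis applied to $n = k$.

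For $(\Rightarrow)$ I argue by contrapositive: assuming that there exists $n$ with $|C_m^X| \leq n(m+1)$ for every $m$, I build $2n$ many $(m+1)$-slaloms $Z_1, \dots, Z_{2n}$ whose $\mathcal{C}_f$-images cover $X$. I insist that $0 \in Z_i(m)$ for every $i$ and every $m$; since $f(0) = 0$ forces $C_0^X \subseteq \{0\}$, level $0$ needs no further work. For $m \geq 1$, I distribute the at most $n(m+1)$ elements of $C_m^X \setminus \{0\}$ arbitrarily among $Z_1(m), \dots, Z_{2n}(m)$ so that each $Z_i(m)$ receives at most $m$ additional values; this is feasible because $2nm \geq n(m+1)$ whenever $m \geq 1$, and it keeps $|Z_i(m)| \leq m+1$. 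To check coverage, take $s \in X$: if $s$ is identically zero, then $s$ lies in every $\mathcal{C}_f(Z_i)$; otherwise $s(|s|-1) \in C_{|s|-1}^X \setminus \{0\}$ sits in some $Z_i(|s|-1)$ while the earlier values of $s$ are $0 \in Z_i(j)$, so $s \in \mathcal{C}_f(Z_i)$.

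The main obstacle is the counting in the cover construction. A naive attempt using only $n$ slaloms provides just $nm$ non-zero slots per level, which falls short of the up to $n(m+1)$ non-zero values that must be accommodated; doubling the number of slaloms to $2n$ restores the needed slack, and this loss is harmless because $\lf$ is closed under finite unions.
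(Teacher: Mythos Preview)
Your proof is correct and follows essentially the same approach as the paper's. Both directions exploit the key structural fact about $Y$ that you identify, and both handle $(\Rightarrow)$ by contrapositive, distributing the column values among finitely many $(m+1)$-slaloms each forced to contain $0$. The only real difference is in the bookkeeping for that distribution: the paper uses $n+1$ slaloms, whereas you use $2n$; your choice makes the inequality $2nm \geq n(m+1)$ hold cleanly for all $m \geq 1$ without appealing to the extra constraint $s(|s|-1) \leq |s|(|s|-3)$ built into $Y$ for small $m$. This is a harmless and arguably cleaner variant of the same argument.
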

\begin{proof}
$(\Leftarrow)$ Let $Z_0,...,Z_{n-1}$ be $(k+1)$-slaloms, since there are $m$ and\\ $s_0,...,s_{n(m+1)}\in X$ such that  $s_i(m)\neq s_j(m)$ for all $i<j\leq n(m+1)$. Therefore, $\{s_i:i\leq n(m+1)\}\not\subseteq \bigcup_{j<n} \mathcal{C}_f(Z_j)$.

$(\Rightarrow)$ Let $X$ be an element of $(\lf\upharpoonright Y)^+$. By contradiction, assume that there is $n$ such that $|\{s(m):m<|s| \wedge s\in X\}|\leq n(m+1)$ for all $m$. For each $m$ there is $\{Z_i(m):i\leq n\}$ a collection of finite sets such that for all $i\leq n$,
        \begin{enumerate}
            \item $\bigcap Z_i(m)=\{0\}$,
            \item $|Z_i(m)|\leq m+1$ and
            \item $\bigcup_{i\leq n}Z_i(m)=\{s(m):m<|s| \wedge s\in X\}$.
        \end{enumerate}
Therefore, we can define $Z_0,...Z_n$ $(m+1)$-slaloms such that they show $X\in \lf$.
\end{proof}

\begin{lemma}\label{restriccion fsigma}
    $\lf\upharpoonright Y$ is an $F_\sigma$ ideal.
\end{lemma}

\begin{proof}
    Set $\mathcal{A}_n=\{B\in \mathcal{P}_f: B\not\subseteq Y \vee  \exists m \, |\{s(m):s\in B \wedge m<|s| \}|> n(m+1) \}$ for $n<\omega$. By Lemma \ref{positive f sigma}, $\lf\upharpoonright Y= \bigcup_{n<\omega} \mathcal{P}_f\setminus \mathcal{A}_n$. Since both conditions $B\not\subseteq Y$ and $\exists m \, |\{s(m):s\in B \wedge m<|s| \}|> n(m+1)$ can be verified with a finite subset of $B$, we see that  $\mathcal{A}_n$ for all $n$ is an open set. 
\end{proof}

Since there is no $F_\sigma$ ideal above $\nwd$ in Kat\v{e}tov order (\cite{meza}, Corollary 3.3.5), we have the next theorem.

\begin{theorem}
For all $X\in (\lf\upharpoonright Y)^+$, $\nwd \not\leq_K \lf\upharpoonright X$.   
\end{theorem}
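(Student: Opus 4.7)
The statement $\forall X\in(\lf\upharpoonright Y)^+\,\forall\varphi\colon X\to\mathbb{Q}\,\exists N\in\nwd\,(\varphi^{-1}[N]\notin\lf)$ is $\Pi^1_3$, by the same complexity count as in Proposition~\ref{nwd vs lf} ($\lf$ is analytic, $\nwd$ is Borel), hence downward absolute under set forcing by Shoenfield's theorem. I would therefore reduce to proving it in a generic extension $V[G]$ of $V$ in which $\add(\mathcal{M})<\cov(\mathcal{M})$ holds, obtained exactly as in Proposition~\ref{nwd vs lf} by adding $\omega_2$ Cohen reals to a model of $\CH$.

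Working in $V[G]$, I would mimic the contradiction argument of Lemma~\ref{a<c}. Fix $X\in(\lf\upharpoonright Y)^+$ and suppose for contradiction that $\varphi\colon X\to\mathbb{Q}$ is a Kat\v{e}tov reduction. By Lemma~\ref{swd} take a family $\{N_\alpha:\alpha<\add(\mathcal{M})\}\subseteq\nwd$ witnessing the covering property for somewhere-dense subsets of $\mathbb{Q}$, and for each $\alpha$ cover $\varphi^{-1}[N_\alpha]\subseteq X$ by $\bigcup_{i\leq n_\alpha}\mathcal{C}_f(Z_i^\alpha)$ with $(k+1)$-slaloms $Z_i^\alpha$. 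The target is a $D\subseteq X$ with $D\in(\lf\upharpoonright Y)^+$ and $D\cap\mathcal{C}_f(Z_i^\alpha)$ finite for every pair $(\alpha,i)$: since $\varphi$ is Kat\v{e}tov, $\varphi[D]$ must be somewhere dense, so meets some $N_\alpha$ infinitely, yet $\varphi[D]\cap N_\alpha\subseteq\varphi[\bigcup_i(D\cap\mathcal{C}_f(Z_i^\alpha))]$ is finite, which is the desired contradiction.

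The construction of $D$ exploits the specific structure of $Y$. Writing $V_\ell=\{s(\ell):s\in X,\,|s|=\ell+1\}$ and identifying $Y$ with its natural image in $\omega\times\omega$ via $s\mapsto(|s|-1,s(|s|-1))$, Lemma~\ref{positive f sigma} combined with the bound $|V_\ell|\leq(\ell+1)(\ell-2)+1$ produces an increasing sequence $\ell_0<\ell_1<\cdots$ with $|V_{\ell_n}|/(\ell_n+1)\to\infty$. Over the family $\{Z_i^\alpha\}$ of size $<\cov(\mathcal{M})$ I would take a Cohen-generic slalom $c$ and place $D$ at the levels $\{\ell_n\}$, choosing $D_n\subseteq V_{\ell_n}$ of size exceeding $n(\ell_n+1)$ in a way controlled by $c$, so that the Cohen-hit properties of $c$ against each $Z_i^\alpha$ translate, via the definition of $D$, into cofinite avoidance of $\mathcal{C}_f(Z_i^\alpha)$ by $D$; positivity of $D$ is then immediate from Lemma~\ref{positive f sigma}.

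The main difficulty, absent in Lemma~\ref{a<c}, is the amplification step. Because each $s\in Y$ has only one nontrivial coordinate $s(|s|-1)$, a single Cohen hit between $c$ and a slalom $Z_i^\alpha$ does not by itself propagate into avoidance at many levels, as it does for the ambient $T^c\subseteq\mathcal{P}_f$ in Lemma~\ref{a<c}. Overcoming this requires balancing the Cohen-generic choice of $c$ against the combinatorial surplus $|V_{\ell_n}|\gg(\ell_n+1)$ supplied by Lemma~\ref{positive f sigma}; the quadratic bound on $V_\ell$ coming from the definition of $Y$ is precisely what provides the room to diagonalize $\add(\mathcal{M})$-many slalom constraints while keeping $D$ in $(\lf\upharpoonright Y)^+$.
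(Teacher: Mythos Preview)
Your proposal follows the absoluteness-plus-Cohen-real template of Lemma~\ref{a<c}, but the paper's proof is entirely different and much more elementary: it is a direct diagonal construction inside $V$, using only Lemma~\ref{positive f sigma} and the topology of $\mathbb{Q}$, with no cardinal-invariant hypothesis, no generic real, and no absoluteness.

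More to the point, your sketch has a genuine gap at exactly the place you flag. For $s\in Y$ with $|s|=\ell+1$, membership $s\in\mathcal{C}_f(Z)$ reduces (when $0\in Z(j)$ for all $j<\ell$) to the single condition $s(\ell)\in Z(\ell)$. Thus ``$D\cap\mathcal{C}_f(Z_i^\alpha)$ finite'' means that for all but finitely many $n$, the \emph{entire block} $D_n$ avoids the set $Z_i^\alpha(\ell_n)$. A Cohen-generic $c$ will hit each $Z_i^\alpha$ at some coordinate, but nothing in your outline converts a single hit into avoidance at \emph{all} later levels; the propagation in Lemma~\ref{a<c} works precisely because each $s\in\mathcal{P}_f$ carries many free coordinates and hence passes through the coordinate where the hit occurs, which is exactly what elements of $Y$ fail to do. Nor is ``eventual avoidance of $Z_i^\alpha$'' a comeager condition on the natural space of choices for $(D_n)_n$, so straightforward genericity does not deliver it either. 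Saying that the quadratic surplus ``provides the room to diagonalize'' names the hope but not the mechanism: you have not specified the forcing for which $c$ is generic, how $D_n$ is read off from $c$, or why the genericity of $c$ yields cofinite avoidance for every pair $(\alpha,i)$.

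The paper exploits Lemma~\ref{positive f sigma} instead. That lemma says $X\subseteq Y$ is positive iff for every $n$ some level $m$ carries more than $n(m+1)$ distinct last-coordinate values. Given $\varphi:X\to\mathbb{Q}$ Kat\v{e}tov and a base $\{B_n\}$ for $\mathbb{Q}$, one builds a decreasing chain $X=X_0\supseteq X_1\supseteq\cdots$ of positive sets together with finite $F_{n+1}\subseteq X_n$ witnessing the level inequality with parameter $n+1$, and nonempty open $C_{n+1}\subseteq B_{n+1}\setminus\bigcup_{i\le n+1}\varphi[F_i]$ with $X_{n+1}=X_n\setminus\varphi^{-1}[C_{n+1}]$ still positive. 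Then $F=\bigcup_n F_n$ is positive by Lemma~\ref{positive f sigma}, while $\varphi[F]$ misses every $C_n$ and is therefore nowhere dense, contradicting that $\varphi$ is Kat\v{e}tov. This is the argument you should aim for; the heavy machinery is unnecessary here precisely because $\lf\upharpoonright Y$ is $F_\sigma$ with the explicit combinatorial description of Lemma~\ref{positive f sigma}.
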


As a corollary of the last theorem and Theorem \ref{Cohen reals}, we have the following result:

\begin{coro}\label{diferent}
    $\lav((\lf\upharpoonright Y)^+)$ neither has the Laver property nor adds Cohen reals.
\end{coro}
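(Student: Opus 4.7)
The plan is to derive the corollary immediately from Theorem~\ref{Cohen reals}, Theorem~\ref{Laver property}, and the theorem immediately preceding the corollary. Set $\id := \lf \upharpoonright Y$, so that the forcing in question is $\lav(\id^+)$, and note that for every $X \subseteq Y$ one has $\id \upharpoonright X = \lf \upharpoonright X$, since both sides consist exactly of the subsets of $X$ that lie in $\lf$.

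For the non-addition of Cohen reals, I will apply Theorem~\ref{Cohen reals} in contrapositive form. What it asks for is $\nwd \not\leq_K \id \upharpoonright X$ for every $X \in \id^+$. Since $\id^+$ consists of the subsets of $Y$ that are not in $\lf$, and $\id \upharpoonright X = \lf \upharpoonright X$ for any such $X$, the theorem immediately above the corollary supplies precisely this statement. Hence $\lav(\id^+)$ does not add Cohen reals.

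For the failure of the Laver property, I will invoke Theorem~\ref{Laver property}, again in contrapositive form, which requires only a single $X \in \id^+$ with $\lf \leq_K \id \upharpoonright X$. The natural choice is $X := Y$: the observation recorded just before Lemma~\ref{positive f sigma} gives $Y \in \lf^+$, so $Y \in \id^+$, and $\id \upharpoonright Y = \lf \upharpoonright Y$. The inclusion $\iota : Y \hookrightarrow \mathcal{P}_f$ is then a Kat\v{e}tov function witnessing $\lf \leq_K \lf \upharpoonright Y$, because for every $A \in \lf$ we have $\iota^{-1}[A] = A \cap Y \in \lf \upharpoonright Y$.

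I do not anticipate any real obstacle. The substantive content has already been packed into the theorem immediately above the corollary (the recursive construction exploiting Lemma~\ref{positive f sigma}) and into the two characterization theorems; all that remains is the bookkeeping above, essentially checking that restricting an already-restricted ideal behaves as expected and that plain inclusion suffices as a Kat\v{e}tov witness from $\lf$ into $\lf \upharpoonright Y$.
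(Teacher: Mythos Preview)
Your argument is correct and matches the paper's intended derivation: the non-addition of Cohen reals follows from the preceding theorem together with Theorem~\ref{Cohen reals}, and the failure of the Laver property follows from Theorem~\ref{Laver property} via the inclusion $Y\hookrightarrow\mathcal{P}_f$ witnessing $\lf\leq_K\lf\upharpoonright Y$. The paper leaves the proof implicit, but your bookkeeping (in particular the identity $(\lf\upharpoonright Y)\upharpoonright X=\lf\upharpoonright X$ for $X\subseteq Y$) is exactly what is needed to make the deduction precise.
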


By Corollary \ref{diferent}, for co-ideals it is not equivalent to have the Laver property and to not add Cohen reals for forcing notions of the form $\mathbb{L}(\mathcal{I}^+)$. Moreover, the same phenomenon occurs for ultrafilters: under $\CH$, every $F_\sigma$ ideal can be extended to a $P$-point that is not Kat\v{e}tov below $\nwd$ (such an ultrafilter is called an $\nwd$-ultrafilter). Thus, if $\mathcal{U}$ is a $P$-point that extends $\lf\upharpoonright Y$, $\nwd\nleq_K \mathcal{U}^*$ and $\lf\upharpoonright Y\leq_K \mathcal{U}^*$. Therefore, we have the following result.

\begin{theorem}
    Assuming $\CH$, there is $\mathcal{U}$ a $P$-point such that $\mathbb{L}(\mathcal{U})$ does not add Cohen reals but it does not have the Laver property.
\end{theorem}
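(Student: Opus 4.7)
The plan is to construct, under $\CH$, a $P$-point $\mathcal{U}$ on $Y$ (identifying $Y$ with $\omega$) extending the dual filter $(\lf \upharpoonright Y)^*$ and satisfying $\nwd \not\leq_K \mathcal{U}^*$. Once this is done, the forcing-theoretic conclusions follow quickly: via the inclusion $Y \hookrightarrow \mathcal{P}_f$ one sees $\lf \leq_K \lf \upharpoonright Y$, and the extension hypothesis gives $\lf \upharpoonright Y \leq_K \mathcal{U}^*$, so $\lf \leq_K \mathcal{U}^*$. Taking $X = Y \in \mathcal{U}$ in Theorem \ref{Laver property} then shows $\lav(\mathcal{U})$ lacks the Laver property. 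For Cohen reals, $\nwd \not\leq_K \mathcal{U}^*$ implies $\nwd \not\leq_K \mathcal{U}^* \upharpoonright X$ for every $X \in \mathcal{U}$ (any restricted witness extends to a full one by mapping the complement to a fixed point), so Theorem \ref{Cohen reals} yields that $\lav(\mathcal{U})$ adds no Cohen reals.

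The construction relies on two ingredients. \emph{First}, by the lemma above and Mazur's theorem, the $F_\sigma$ ideal $\lf \upharpoonright Y$ is of the form $\{A : \mu(A) < \infty\}$ for some lower semicontinuous submeasure $\mu$ on $Y$, so $A \in (\lf \upharpoonright Y)^+$ iff $\mu(A) = \infty$. This yields $\lf \upharpoonright Y$-positive pseudo-intersections for decreasing towers: given $(C_n)_n$ decreasing in $(\lf \upharpoonright Y)^+$, pick finite $F_n \subseteq C_n$ with $\mu(F_n) \geq n$ and set $B = \bigcup_n F_n$; then $\mu(B) = \infty$ and $B \setminus C_m \subseteq \bigcup_{n<m} F_n$ is finite. \emph{Second}, by the theorem just established, for every $X \in (\lf \upharpoonright Y)^+$ and every $\varphi : X \to \mathbb{Q}$ there is $X' \subseteq X$ in $(\lf \upharpoonright Y)^+$ with $\varphi[X'] \in \nwd$.

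Under $\CH$, enumerate all pairs $\{(\vec{A}^\alpha, \varphi_\alpha) : \alpha < \omega_1\}$ so that every $\subseteq^*$-decreasing sequence $\vec{A}^\alpha = (A^\alpha_n)_n$ of subsets of $Y$ and every function $\varphi_\alpha : Y \to \mathbb{Q}$ appears. Recursively build a $\subseteq^*$-decreasing tower $(B_\alpha)_{\alpha < \omega_1}$ in $(\lf \upharpoonright Y)^+$: at stage $\alpha$, if $\{B_\beta : \beta < \alpha\} \cup \{A^\alpha_n : n < \omega\}$ together with $(\lf \upharpoonright Y)^*$ has all finite intersections $\lf \upharpoonright Y$-positive, apply the first ingredient to obtain a pseudo-intersection $C_\alpha \in (\lf \upharpoonright Y)^+$ of this combined system; otherwise just take a pseudo-intersection of $(B_\beta)_{\beta < \alpha}$ alone (so some offending $A^\alpha_n$ will be put outside $\mathcal{U}$). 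Then apply the second ingredient to obtain $B_\alpha \subseteq C_\alpha$ in $(\lf \upharpoonright Y)^+$ with $\varphi_\alpha[B_\alpha] \in \nwd$. The ultrafilter $\mathcal{U}$ generated by $\{B_\alpha : \alpha < \omega_1\} \cup (\lf \upharpoonright Y)^*$ is a $P$-point extending $\lf \upharpoonright Y$, and it is nowhere dense because every $\varphi : Y \to \mathbb{Q}$ occurs as some $\varphi_\alpha$, giving $B_\alpha \in \mathcal{U}$ with $\varphi_\alpha[B_\alpha] \in \nwd$.

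The main obstacle I anticipate is coordinating the two diagonalizations within a single stage, but it is resolved because the Kat\v{e}tov non-reducibility $\nwd \not\leq_K \lf \upharpoonright X$ is preserved under restriction to $\lf \upharpoonright Y$-positive subsets, so the $\nwd$-shrinking step after the pseudo-intersection step keeps positivity intact. The remaining bookkeeping—arranging a surjective enumeration in $\omega_1$ steps, checking that every element of $(\lf \upharpoonright Y)^*$ gets into $\mathcal{U}$, and verifying that every decreasing sequence landing in $\mathcal{U}$ is handled at some stage—is the standard construction of a $P$-point ultrafilter under $\CH$.
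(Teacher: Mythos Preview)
Your proposal is correct and follows essentially the same approach as the paper. The paper's proof is a one-line application of the (asserted) fact that under $\CH$ every $F_\sigma$ ideal can be extended to a nowhere dense $P$-point, combined with the lemma that $\lf\upharpoonright Y$ is $F_\sigma$; you unpack exactly this construction, using Mazur's submeasure characterization for the pseudo-intersection step and the already-proved theorem that $\nwd \not\leq_K \lf\upharpoonright X$ for every $X \in (\lf\upharpoonright Y)^+$ for the nowhere-dense shrinking step, and then draw the same conclusions from Theorems~\ref{Cohen reals} and~\ref{Laver property}.
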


The ideal $\lf$ also characterizes the Laver property for Miller forcing associated with a co-ideal in the same way that Theorem \ref{Laver property}.

\begin{proposition} \label{li implies mi}
    Let $\id$ be an ideal. If $\li$ has the Laver property, then $\mil(\id^+)$ has the Laver property.
\end{proposition}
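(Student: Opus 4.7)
The plan is to deduce the Miller case from the Laver case by passing to the \emph{splitting skeleton} of a Miller tree. Given $T\in\mil(\id^+)$, define a tree $S=S_T$ on $\omega$ whose nodes above its stem correspond bijectively to the splitting nodes of $T$ above the first splitting node, encoding each splitting node $\tau$ of $T$ by the sequence $\langle n_0,\dots,n_{k-1}\rangle\in\omega^{<\omega}$ of choices made at successive splitting nodes of $T$ leading to $\tau$. Under this encoding, if $s\in S$ corresponds to the splitting node $\tau_s$ of $T$, then $succ_S(s)=succ_T(\tau_s)\in\id^+$; hence $S\in\li$. The encoding induces a natural homeomorphism $\Psi:[T]\to[S]$ sending each branch to its splitting-choice sequence.

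Now fix $T\in\mil(\id^+)$ and a $\mil(\id^+)$-name $\dot f$ with $T\Vdash\dot f\leq g$ for some $g\in V$. Using the continuous reading of names for $\mil(\id^+)$ (standard, proved as for $\li$), pass to $\bar T\leq T$ and obtain a continuous ground-model $F:[\bar T]\to\omega^\omega$ with $\bar T\Vdash \dot f=F(\dot r_{gen})$. Set $\bar S:=S_{\bar T}$ and let $\tilde F:=F\circ\Psi^{-1}:[\bar S]\to\omega^\omega$; this is continuous and pointwise bounded by $g$, so it defines a $\li$-name $\dot{\tilde f}$ below $\bar S$ with $\bar S\Vdash\dot{\tilde f}\leq g$. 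Apply the Laver property of $\li$ to $\bar S$ and $\dot{\tilde f}$ to obtain $S'\leq\bar S$ in $\li$ and a $(2^n)$-slalom $F^*$ in $V$ with $S'\Vdash\dot{\tilde f}(n)\in F^*(n)$ for all $n$.

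It remains to translate $S'$ back to a Miller condition. Let $T'\subseteq \bar T$ be the set of initial segments of branches $x\in[\bar T]$ with $\Psi(x)\in[S']$; equivalently, $T'$ keeps the unique paths of $\bar T$ between consecutive splitting nodes indexed by $S'$, and branches according to $succ_{S'}(s)=succ_{\bar T}(\tau_s)\in\id^+$ at each such node. Then $T'\in\mil(\id^+)$: its splitting nodes are exactly the $\tau_s$ with $s$ above $stem(S')$, they are dense in $T'$, and their successor sets in $T'$ lie in $\id^+$ by construction. Since $\Psi$ restricts to a bijection $[T']\to[S']$, we get $T'\Vdash\dot f(n)=F(\dot r_{gen})(n)=\tilde F(\Psi(\dot r_{gen}))(n)\in F^*(n)$, as required. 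The main bookkeeping point, and the only real obstacle, is verifying that the correspondence $T\leftrightarrow S_T$ respects the extension relation, i.e.\ that $S'\leq\bar S$ in $\li$ really does yield $T'\leq\bar T$ in $\mil(\id^+)$ with matching forcing behavior; this follows once one observes that the splitting structure of any subcondition of $\bar T$ in $\mil(\id^+)$ is completely encoded by its image under the skeleton operation.
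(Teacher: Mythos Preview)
Your argument is correct and follows essentially the same route as the paper: both collapse a Miller tree to its splitting skeleton to obtain a Laver condition, transfer the bounded name along the induced homeomorphism of branch spaces, apply the Laver property on the $\li$ side, and then pull the resulting condition and slalom back to $\mil(\id^+)$. The only cosmetic differences are that the paper names the collapse map $\Phi$ and invokes Mostowski absoluteness explicitly to justify that the continuous function stays pointwise bounded by $g$ (a step you assert without proof), and that your equality ``$succ_{S'}(s)=succ_{\bar T}(\tau_s)$'' should read $succ_{T'}(\tau_s)=succ_{S'}(s)\subseteq succ_{\bar T}(\tau_s)$.
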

\begin{proof}
Let $\Phi: \mil(\id^+)\rightarrow \li$ be the function that given $T$ ``collapses"  the nodes  (below $stem(T)$) that have just one immediate successor to the splitting node below them.

Let $T\in\mil(\id^+)$, $F:[T]\rightarrow \omega^\omega$ be a continuous function, $f\in\omega^\omega$ and $\dot r_{gen \mil}$ be the $\mil(\id^+)$-name of the generic real such that $ T\Vdash``F(\dot r_{gen\mil})\leq f".$

Consider $\Phi_T:[T]\rightarrow [\Phi[T]]$ the function induced by $\Phi$. This is an homeomorphism.

We claim that each function $x$ in the image of $F$ is below $f$. If it is not true, there are $y\in[T]$ and $n$ such that $f(n)<F(y)(n)$. Since $F$ is continuous, there is $\tau$ such that $y\in\langle \tau \rangle $ and for all $x$ in $\langle \tau \rangle$ we have $F(x)(n)>f(n)$. Thus, if $S\leq T$ is such that $\tau \subseteq stem(S)$, $S\Vdash`` F(\dot r_{gen\mil})\not\leq f"$, a contradiction.

According to Mostowski’s Absoluteness Theorem, 
\[\Phi(T)\Vdash`` F(\Phi^{-1}_T (\dot r_{gen\lav}))\leq f"\]
where $\dot r_{gen\lav}$ is the canonical name for the generic real added by $\li$. By hypothesis over $\li$, there is $S'\leq \Phi(T)$ and $Z$ an $(m+1)$-slalom such that \[S'\Vdash`` \forall n<\omega (F(\Phi^{-1}_T (\dot r_{gen\lav}))(n)\in Z(n))".\]

As we proved the previous claim, we can show that for each $x$ in the image of $F$ and for each $n<\omega$, $x(n)\in Z(n)$. We can find $S\leq T$ such that $S\in\Phi^{-1}\{S'\}$. Thus, $S\Vdash``\forall n F(\dot r_{gen\mil})(n)\in Z(n)"$.
\end{proof}

The proof of the first part of Theorem \ref{Laver property} can be adapted to show that for all $\id$ an ideal if there is $X\in\id^+$ such that $\lf\leq_K \id\upharpoonright X$, then $\li$ does not have the Laver property. Therefore:

\begin{coro}
    For all $X\in \id^+$, $\mathcal{L}_f\not\leq_{K}\id\upharpoonright X$ if and only if $\mil(\id^+)$ has the Laver property.
\end{coro}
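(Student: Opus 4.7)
The plan is to prove both directions by leveraging the corresponding theorem for Laver forcing (Theorem \ref{Laver property}), together with Proposition \ref{li implies mi} for the forward implication and an adaptation of the proof of Theorem \ref{Laver property} for the converse.

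For the direction ``$\lf \not\leq_K \id \upharpoonright X$ for all $X \in \id^+$ implies $\mil(\id^+)$ has the Laver property,'' I would simply chain the two results: Theorem \ref{Laver property} gives that $\li$ has the Laver property, and then Proposition \ref{li implies mi} transfers this property to $\mil(\id^+)$.

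The interesting direction is the contrapositive of the converse: assuming there exists $X \in \id^+$ with $\lf \leq_K \id \upharpoonright X$ witnessed by a Kat\v{e}tov function $\varphi : X \to \mathcal{P}_f$, I want to exhibit a $\mil(\id^+)$-name bounded by some $f$ that escapes every ground-model $(n+1)$-slalom. My plan is to reuse essentially verbatim the construction from the $(\Leftarrow)$-part of the proof of Theorem \ref{Laver property}: set $p = X^{<\omega}$ (which is simultaneously a condition in $\li$ and in $\mil(\id^+)$, since every node has all of $X$ as its successor set) and define the $\mil(\id^+)$-name $\dot h = \varphi(\dot r_{gen}(0))^\frown \varphi(\dot r_{gen}(1))^\frown \cdots$. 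Given $T \leq p$ in $\mil(\id^+)$ and an $(n+1)$-slalom $Z$, I would take $\sigma = stem(T)$, set $k = |\sigma|$ and $j = \sum_{i < k} |\varphi(\sigma(i))|$, split $Z$ shifted by $j$ into finitely many $(n+1)$-slaloms $Z_0, \dots, Z_j$, use positivity of $\varphi[succ_T(\sigma)]$ in $\lf$ to produce $n \in succ_T(\sigma)$ with $\varphi(n) \notin \bigcup_{i \leq j} \mathcal{C}_f(Z_i)$, and conclude that $T_{\sigma^\frown n} \Vdash \dot h(j+\ell) \notin Z(j+\ell)$ for an appropriate $\ell$.

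The main thing to verify---and the reason the argument is not literally the same sentences as the Laver proof---is that each combinatorial step remains valid after switching from $\li$ to $\mil(\id^+)$. The crucial facts I would check are (i) for any $T \leq p$ in $\mil(\id^+)$, the stem $stem(T)$ is automatically a splitting node of $T$, so $succ_T(stem(T)) \in \id^+$ exactly as in the Laver case; and (ii) $T_{\sigma^\frown n}$ remains a valid Miller condition of co-ideal type below $T$. Both are immediate consequences of the definition of $\mil(\id^+)$, so no genuinely new idea is required, but these are the points where the argument could conceivably fail and so they need to be spelled out explicitly to confirm that the adaptation really does go through.
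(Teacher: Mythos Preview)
Your proposal is correct and matches the paper's approach exactly: the paper derives the forward implication from Theorem \ref{Laver property} together with Proposition \ref{li implies mi}, and for the converse simply remarks that the $(\Leftarrow)$-argument of Theorem \ref{Laver property} adapts verbatim to $\mil(\id^+)$ (the line in the paper just before the corollary contains an apparent typo, writing $\li$ where $\mil(\id^+)$ is clearly intended). Your checks (i) and (ii) are precisely the points one must observe to see the adaptation goes through, and both hold for the reasons you give.
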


By Theorem \ref{SZ} and Proposition \ref{li implies mi}, Corollary \ref{diferent} holds by replacing $\lav$ by $\mil$, that is:

\begin{coro}
    $\mil((\lf\upharpoonright Y)^+)$ neither has the Laver property nor adds Cohen reals.
\end{coro}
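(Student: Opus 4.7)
The plan is to derive this corollary as a near-mechanical consequence of the two Kat\v{e}tov-theoretic characterizations available for $\mil(\id^+)$: Theorem~\ref{SZ} for the addition of Cohen reals, and the corollary immediately preceding the statement for the Laver property. In both cases, the relevant obstruction is a Kat\v{e}tov-domination condition on positive restrictions of $\lf \upharpoonright Y$, and that condition has already been settled earlier in this section.

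First I would handle the ``does not add Cohen reals'' half. Applying Theorem~\ref{SZ} with $\id = \lf \upharpoonright Y$, it suffices to check that $\nwd \not\leq_K (\lf \upharpoonright Y) \upharpoonright X$ for every $X \in (\lf \upharpoonright Y)^+$. Since any such $X$ is a subset of $Y$, we have $(\lf \upharpoonright Y) \upharpoonright X = \lf \upharpoonright X$, and the theorem asserting ``for all $X \in (\lf \upharpoonright Y)^+$, $\nwd \not\leq_K \lf \upharpoonright X$'' supplies exactly what is required.

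Next I would transfer the failure of the Laver property from $\lav$ to $\mil$. By Corollary~\ref{diferent}, $\lav((\lf \upharpoonright Y)^+)$ fails the Laver property, and by Theorem~\ref{Laver property} this yields some $X_0 \in (\lf \upharpoonright Y)^+$ with $\lf \leq_K (\lf \upharpoonright Y)\upharpoonright X_0$. Invoking the preceding corollary, which characterizes the Laver property for $\mil(\id^+)$ through the very same Kat\v{e}tov condition, the same witness $X_0$ blocks $\mil((\lf \upharpoonright Y)^+)$ from having the Laver property.

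The only ``obstacle,'' to the extent that one exists, is bookkeeping: explicitly extracting the witness $X_0$ from the joint statement of Corollary~\ref{diferent} and matching it up against the $\mil$-version of the characterization. Beyond this routine unpacking, no additional combinatorial work is required; the corollary is simply a repackaging of Corollary~\ref{diferent} via the $\mil$-analogues of the equivalences proved earlier.
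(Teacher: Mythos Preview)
Your proposal is correct and follows essentially the same route as the paper. The paper states the corollary as a direct consequence of Theorem~\ref{SZ} and Proposition~\ref{li implies mi} (the latter standing in for the preceding Miller characterization you invoke), while you unpack the Laver-property half by extracting the Kat\v{e}tov witness $X_0$ from Corollary~\ref{diferent} via Theorem~\ref{Laver property} and then feeding it into the preceding corollary; this is just a slightly more explicit version of the same argument.
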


Thus, once again, in contrast to Theorem \ref{FZ}, the notions of having the Laver property and not adding Cohen reals do not coincide for Miller forcing associated with a co-ideal.

\section{When does $\li$ preserves outer measure?}

In this section we adapt the proof of a theorem of Sabok and Zapletal (Theorem \ref{miller medida}) about Miller forcing associated with a co-ideal and outer measure, and we prove the analogous result for $\li$.

To accomplish this goal, we set some notation: $\lambda$ denotes the Lebesgue measure on the Cantor space. Recall that the definition of $\lambda$ for basic open sets is as follows: if $t\in2^{<\omega}$, then $\lambda(\langle t \rangle)=2^{-|t|}$. The outer measure (denoted by $\lambda^*$) of a subset $A$ of $2^{\omega}$ is the infimum of the measures of open sets that cover $A$. We say that a forcing notion $\p$ preserves outer measure if for any $\varepsilon>0$, any $A \subseteq 2^\omega$, and any generic filter $G\subseteq \p$, if $V\models \lambda^*(A)=\varepsilon$, then $V[G]\models \lambda^*(A)=\varepsilon$.

For the rest of this section, we use the following notation. Given $A\subseteq \omega\times 2^\omega$, for all $n< \omega$ we define $A_n=\{x\in2^\omega:(n,x)\in A\}$ and for all $x\in 2^\omega$ we define $A^x=\{n<\omega: (n,x)\in A\}$.

\begin{definition}
An ideal $\id$ on $\omega$ has the \textit{Fubini property} if for all $\varepsilon>0$ and for every Borel set $A\subseteq \omega\times 2^\omega$, 
\[
\{n: \lambda(A_n)\geq\varepsilon\}\in \id^+ \text{ implies } \lambda^*(\{x: A^x\in \id^+\})>\varepsilon.
\]
\end{definition}

Regarding the Fubini property, there are two important results. The first is a characterization of the Fubini property whose proof can be found in \cite{meza}.

\begin{theorem}[Solecki]
Let $\id$ be an ideal on $\omega$. The ideal $\id$ has the Fubini property if and only if there is no $X\in \id^+$ such that $\mathcal{S}\leq_K \restr{\id}{X}$.
\end{theorem}

Using this theorem, the authors of \cite{sabok} proved the following result. Remember that an ideal is \textit{universally measurable} if it is measurable with respect to the completion of every finite 
$\sigma$-additive Borel measure on $2^\omega$. 

\begin{theorem}[Sabok, Zapletal]\label{miller medida}
Assume that $\id$ is a universally measurable ideal on $\omega$. Then $\mil(\id^+)$ preserves outer Lebesgue measure if and only if $\id$ has the Fubini property.
\end{theorem}

\begin{lemma}\label{li medida implica mi medida}
Let $\id$ be an ideal on $\omega$. If $\li$ preserves outer measure, then $\mil(\id^+)$ preserves outer measure.
\end{lemma}

\begin{proof}
Let $A\subseteq 2^\omega$ with measure $\delta$ and $T\in\mil(\id^+)$. We will find $R\leq T$ such that $R\Vdash``\lambda^*(A)=\delta"$. Take $0<\varepsilon<\delta$ and let $\dot U$ be a $\mil(\id^+)$-name for an open set such that $T\Vdash`` \lambda(\dot U)\leq \varepsilon"$. By a standard fusion argument, there is $S\leq T$ and a function $h$ from the splitting nodes of $S$ to the topology of the Cantor space such that 
\[
S\Vdash``\dot U=\bigcup \{h(\dot r_{gen\mil(\id^+)}\upharpoonright n+1): \dot r_{gen\mil(\id^+)}\upharpoonright n \text{ is a splitting node}\}".
\]
One can prove that for all $y\in [S]$,
\[
\lambda\Big(\bigcup \{h(y\upharpoonright n+1): y\upharpoonright n \text{ is a splitting node}\}\Big)\leq \varepsilon.
\]

Recall that there is a function $\Phi:\mil(\id^+) \to \li$ that collapses the nodes between splitting nodes. Moreover, given $S$ there is a homeomorphism between $[S]$ and $[\Phi(S)]$ denoted by $\Phi_S$. By Mostowski's Absoluteness Theorem, if $\dot U'$ is a $\li$-name for 
\[
\bigcup \{h(\Phi_S^{-1}(\dot r_{gen\li})\upharpoonright n+1): \Phi_S^{-1}(\dot r_{gen\li})\upharpoonright n \text{ is a splitting node}\},
\]
then $\Phi(T)\Vdash``\lambda(\dot U')\leq \varepsilon"$.

Since $\li$ preserves outer measure, there is $S'\leq \Phi(T)$ and $a\in A$ such that $S'\Vdash ``a\notin \dot U'"$. In fact, for all $y\in [S']$ we have 
\[
y\notin \bigcup \{h(\Phi_S^{-1}(y)\upharpoonright n+1): \Phi_S^{-1}(y)\upharpoonright n \text{ is a splitting node}\}.
\]
Choose $R\in \Phi^{-1}(S')$. Then $R\Vdash``a\in A \setminus \dot U"$. Therefore, $R$ forces that $\lambda^*(A)> \varepsilon$.
\end{proof}

Based on Theorem \ref{miller medida} and its proof, we prove the following. It is important to remark that the universally measurable hypothesis is not required in our work, given the way we establish the Fubini property.

\begin{theorem}
Let $\id$ be an ideal on $\omega$. Then $\li$ preserves outer measure if and only if $\id$ has the Fubini property.
\end{theorem}

\begin{proof}
From Theorem \ref{miller medida} and Lemma \ref{li medida implica mi medida} it follows that if $\id$ does not have the Fubini property, then $\li$ does not preserve outer measure.

Conversely, assume that $\id$ has the Fubini property. To show that $\li$ preserves outer measure, take $A\subseteq 2^\omega$ of measure $\delta$, $T\in\li$, and a $\li$-name $\dot U$ for an open set such that $T\Vdash``\lambda(\dot U)\leq\varepsilon"$ for some $\varepsilon<\delta$. We can find a function $h$ from the splitting nodes of $T$ to basic clopen sets of $2^\omega$ such that
\[
T\Vdash`` \dot U=\bigcup \{h(\restr{\dot r_{gen}}{n}): n>|stem(T)|\}".
\]

For each $n$ there is a barrier $B_n\subseteq T$ such that for all $b\in B_n$,
\[
\lambda\Big(\bigcup\{h(\sigma): \sigma\subseteq b\}\Big) \geq \frac{2^{n+1}-1}{2^{n+1}}\varepsilon.
\]
Extending the stem of $T$ if necessary, we may assume that $B_0=\{stem(T)\}$ and that equality holds when $n=0$.

Let $\{\tau_n:n<\omega\}$ enumerate the splitting nodes of $T$. We build a dense set $D=\{\sigma_n:n<\omega\}$ such that for each $n$ there is $b_n\in B_n$ with $\tau_n,b_n\subseteq \sigma_n$ and $\sigma_0=stem(T)$.

Define $h':D\to open(2^\omega)$ by
\[
h'(\sigma_n)=\bigcup\{h(\sigma):\sigma\subseteq \sigma_n\}.
\]

Let $g:\omega\times \omega\to \omega$ such that if $m\in succ_T(\sigma_n)$, then $\sigma_n^\frown m\subseteq \sigma_{g(n,m)}$ and there is no $k$ with $\sigma_n^\frown m\subseteq \sigma_k \subset \sigma_{g(n,m)}$.

Define $d:D\to Borel(2^\omega)$ by $d(\sigma_0)=h'(\sigma_0)$ and for $n>0$,
\[
d(\sigma_{g(n,m)}) = h'(\sigma_{g(n,m)})\setminus h'(\sigma_n).
\]
Then $\lambda(d(\sigma_{g(n,m)}))<\frac{\varepsilon}{2^{n+1}}$.

For each $n$ define
\[
A(n)=\{(m,x): m\in succ_T(\sigma_n) \wedge x\notin d(\sigma_{g(n,m)})\},
\]
which is Borel.

Observe that for all $n<\omega$ and $m\in succ_T(\sigma_n)$, $A(n)_m=2^\omega\setminus d(\sigma_{g(n,m)})$ which measure is at least $\frac{2^{n+1}-1}{2^{n+1}}\varepsilon$. Therefore, 
\[
\{m:\lambda(A(n)_m)>\frac{2^{n+1}-1}{2^{n+1}}\varepsilon\}=succ_T(\sigma_n)\in\id^+.
\] 
Using the Fubini property, we have for each $n$, 
\[ 
\lambda^*(\{x:A(n)^x\in \id^+ \})>\frac{2^{n+1}-1}{2^{n+1}}\varepsilon.
\] 
Let $E_n=2^\omega\setminus\{x:A(n)^x\in\id^+\}$. Then $\lambda^*(\bigcup_{n<\omega}E_n)\leq \varepsilon$. Now, we can find $z\in Z\setminus \bigcup_{n<\omega}E_n$, so for all $n$, $A(n)^z\in\id^+$. We define $S\in \li$ such that $S\leq T$ and $succ_S(\sigma_n)=A(n)^z$. Thus, $S\Vdash` z\notin \dot U"$.
\end{proof}

\section{Using Laver and Miller forcings associated with a co-ideal to add half Cohen reals.}

We say that $h\in\omega^\omega$ is an \emph{infinitely often equal real} over $V$ if for all $r\in\omega^\omega \cap V$ we have $|h\cap r|=\omega$. In the literature this kind of reals is also called \emph{half Cohen reals}, the reason is the following result that is attributed to Bartoszy\'{n}ski in \cite{fullmil}.

\begin{proposition}
    If $V_0\subseteq V_1\subseteq V_2$ are models of the Set Theory such that $V_1$ has a half Cohen real over $V_0$ and $V_2$ has a half Cohen real over $V_1$, then there is a Cohen real over $V_0$ which belongs to $V_2$.
\end{proposition}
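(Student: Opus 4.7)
The goal is to construct a Cohen real $c \in V_2$ over $V_0$, i.e., a real in $V_2$ avoiding every meager set coded in $V_0$. The plan is a two-stage construction: first we use $h_1$ to produce an intermediate real $\alpha \in V_1$ with a ``block-Cohen'' property for one fixed partition, and then we use $h_2$ to combine $\alpha$ into a genuine Cohen real $c \in V_2$. I will freely use the standard characterization that $c \in 2^\omega$ is Cohen over $V_0$ iff for every $V_0$-coded interval partition $(K_\ell)$ of $\omega$ with $|K_\ell|\to\infty$ and every $V_0$-coded sequence $(\rho_\ell)$ with $\rho_\ell\in 2^{K_\ell}$, we have $c\upharpoonright K_\ell=\rho_\ell$ for infinitely many $\ell$.

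For the first stage, I would fix in $V_0$ an interval partition $(I_n)_{n<\omega}$ of $\omega$ with $|I_n|\to\infty$ (say $|I_n|=n+1$) together with $V_0$-coded surjections $\sigma_n\colon\omega\twoheadrightarrow 2^{I_n}$. Inside $V_1$, define $\alpha\in 2^\omega$ by $\alpha\upharpoonright I_n = \sigma_n(h_1(n))$. The half Cohen property of $h_1$ over $V_0$ then immediately gives: for every $V_0$-coded sequence $(\tau_n)$ with $\tau_n\in 2^{I_n}$, the function $f(n):=\min\sigma_n^{-1}(\{\tau_n\})$ lies in $V_0$, so $h_1(n)=f(n)$ infinitely often, whence $\alpha\upharpoonright I_n=\tau_n$ infinitely often. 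This is the block-Cohen property of $\alpha$ along $(I_n)$. Note, however, that $\alpha$ itself is generally not Cohen over $V_0$, since Cohenness requires the analogous block-matching property for \emph{every} $V_0$-coded partition, not merely $(I_n)$.

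In the second stage, I would use $h_2$ to upgrade $\alpha$ to a full Cohen real $c\in V_2$. The idea is to associate to each $V_0$-coded meager $F_\sigma$ set $M\subseteq 2^\omega$ a $V_1$-coded function $g_M\in \omega^\omega$, built from $\alpha$ and from the $V_0$-code of $M$, in such a way that an infinitely-often equality $h_2(n)=g_M(n)$ (which is guaranteed by the half Cohen property of $h_2$ over $V_1$) translates into $c\notin M$. The construction of $c$ is a pointwise combination of $\alpha$ and $h_2$ tuned so that this translation is uniform across all $M$. The main obstacle, and the technical heart of the proof, is defining the combining operation uniformly in $M$: a single $c \in V_2$ must simultaneously avoid every $V_0$-coded $M$, and the argument must convert block-Cohenness of $\alpha$ along $(I_n)$ (a one-partition property) into full Cohenness of $c$ (a property against all partitions). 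The combinatorial point is that $\alpha$'s block-matches on $(I_n)$ already encode enough flexibility to reflect the fine structure of arbitrary $V_0$-coded dense open subsets of $2^\omega$, and the iod property of $h_2$ over $V_1$ then suffices to realize the needed $V_1$-coded targets infinitely often.
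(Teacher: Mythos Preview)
The paper does not prove this proposition; it is quoted as a known result attributed to Bartoszy\'{n}ski and used only to motivate the terminology ``half Cohen real.'' So there is no paper-proof to compare against, and your attempt must be judged on its own.

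Your Stage~1 is correct and standard: from an infinitely-often-equal real $h_1$ over $V_0$ one does obtain $\alpha\in V_1$ with the block-matching property along the fixed partition $(I_n)$, exactly as you describe.

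Stage~2, however, is not a proof but a statement of what a proof would have to accomplish. You write that one should ``associate to each $V_0$-coded meager $M$ a $V_1$-coded function $g_M$'' and that ``$c$ is a pointwise combination of $\alpha$ and $h_2$,'' but you never say what $c$ is, what $g_M$ is, or why $h_2(n)=g_M(n)$ infinitely often forces $c\notin M$. You explicitly flag this as ``the main obstacle, and the technical heart of the proof,'' which is accurate --- and it is exactly the part that is missing. The difficulty is real: block-Cohenness of $\alpha$ along a \emph{single} $V_0$-partition $(I_n)$ does not by itself transfer to matching on an arbitrary $V_0$-partition $P$, because $P$-blocks may be much longer than, or transverse to, the $I_n$'s. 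Some concrete mechanism is needed whereby the infinitely-often-equal property of $h_1$ is used in $V_1$ to synchronize the fixed partition $(I_n)$ with each $V_0$-partition $P$, producing a target $g_{y,P}\in V_1$ that $h_2$ can then hit. Writing down this mechanism \emph{is} the proof; the outline you give stops just before it.
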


Clearly, every Cohen real is also a half Cohen real, but the converse fails. This naturally raises the question of whether there exists a forcing notion that adds a half Cohen real while adding no Cohen reals. Several works, including \cite{fullmil} and \cite{hc}, mention that this question was originally posed by Fermlin in a 1996 problem list. Zapletal gave a positive answer in Theorem~1.3 in \cite{hc}. However, the forcing he constructed relies on tools from dimension theory. For this reason, in the same paper he asked whether there exists a forcing that adds half Cohen reals but not Cohen reals, and whose definition does not involve topological dimension.

In \cite{fullmil}, the authors approach this question by considering the forcing of infinitely often equal trees introduced in \cite{ioe}.

\begin{definition}[Spinas]
    Given a tree $T\subseteq\omega^{<\omega}$  is an \emph{infinitely often equal tree} if for all $t\in T$ there is $N>|t|$ such that for all $k<\omega$ we can find $s\in T$ with $t\subseteq s$ and $N<|s|$ that $s(N)=k$.
\end{definition}

This notion generalizes the concept of a full-splitting Miller tree. In \cite{fullmil}, $\mathbb{IE}$ denotes the forcing of infinitely often equal trees.

It is easy to see that $\mathbb{FM}$ adds a Cohen real, while $\mathbb{IE}$ adds a half Cohen real. In \cite{fullmil}, the authors identify a set of conditions in $\mathbb{IE}$ that force the addition of a Cohen real, but it could be a non-dense set in $\mathbb{IE}$. They note that it is unknown whether there is another condition that forces that no Cohen reals are added.

The work presented in this section is motivated by several results from \cite{fullmil}. Our goal is to find an ideal $\hc$ such that $\mil(\hc^+)$ adds half Cohen reals but does not add Cohen reals. Unfortunately, this goal has not yet been achieved.

\begin{definition}
    Given $r\in \omega^\omega$, we define $H_r=\{s\in\omega^{<\omega}:s\cap r =\emptyset\}$ and let $\hc$ be the ideal generated by the family $\{H_r:r\in \omega^\omega\}$.
\end{definition}

Notice that each infinitely often equal tree is an $\hc$-positive set.

\begin{theorem} \label{hc reals}
    Let $\id$ be an ideal and $X\in \id^+$ such that $\hc\leq_K \restr{\id}{X}$. Then $\li$ and $\mil(\id^+)$ add a half Cohen real.
\end{theorem}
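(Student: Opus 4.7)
The plan is to mimic the Cohen-real construction of Theorem \ref{Cohen reals}, but with the ideal $\hc$ in place of $\nwd$. Let $f : X \to \omega^{<\omega}$ witness $\hc \leq_K \restr{\id}{X}$. Because $\emptyset \in H_r$ for every $r$, the singleton $\{\emptyset\}$ is in $\hc$, so $f^{-1}[\{\emptyset\}] \in \id$; replacing $X$ by $X \setminus f^{-1}[\{\emptyset\}]$, which remains in $\id^+$, I may assume $f(n)$ is a nonempty finite sequence for every $n \in X$. Set $p = X^{<\omega}$: since every node of $p$ has successor set $X \in \id^+$, this is a condition in both $\li$ and $\mil(\id^+)$. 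Define the name
\[
\dot h = f(\dot r_{gen}(0))^\frown f(\dot r_{gen}(1))^\frown f(\dot r_{gen}(2))^\frown \cdots,
\]
which $p$ forces to lie in $\omega^\omega$ (the nonemptiness condition on the values of $f$ makes the concatenation of infinite length).

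I will then show that $p$ forces $\dot h$ to be infinitely often equal with every ground model real. Fix $r \in V \cap \omega^\omega$, $N < \omega$, and $T \leq p$; it suffices to produce $T' \leq T$ and $k \geq N$ with $T' \Vdash ``\dot h(k) = r(k)"$. First, choose $\sigma \in T$ with $\sigma \supseteq stem(T)$, $|\sigma| \geq N$, and $succ_T(\sigma) \in \id^+$: for $\li$ every node above the stem works automatically, while for $\mil(\id^+)$ I use the density of $\id^+$-splitting nodes above $stem(T)$. Because each $|f(\sigma(i))| \geq 1$, the condition $T_\sigma$ decides $\dot h$ on an initial segment of length $j = \sum_{i<|\sigma|} |f(\sigma(i))| \geq N$. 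Define $r_j \in \omega^\omega$ by $r_j(i) = r(j+i)$. Then $H_{r_j} \in \hc$, hence $f^{-1}[H_{r_j}] \in \restr{\id}{X}$, so I can pick
\[
n \in succ_T(\sigma) \setminus f^{-1}[H_{r_j}].
\]
By definition of $H_{r_j}$, there is some $i < |f(n)|$ with $f(n)(i) = r(j+i)$, and by construction of $\dot h$,
\[
T_{\sigma^\frown n} \Vdash ``\dot h(j+i) = f(n)(i) = r(j+i)",
\]
so $k := j + i \geq N$ witnesses the required coincidence.

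The main technical point is simply verifying that $T_{\sigma^\frown n}$ is a legitimate condition in each forcing: in $\li$ this is automatic from the choice of $\sigma$ above the stem, while in $\mil(\id^+)$ it relies on $\sigma$ being an $\id^+$-splitting node (so that $\sigma^\frown n \in T$ and the splitting nodes of $T$ above $\sigma^\frown n$ remain dense). Beyond this bookkeeping, the argument is a direct application of the Kat\v{e}tov reduction, parallel to the easy direction of Theorem \ref{Cohen reals}; no delicate combinatorics of $\hc$ is needed other than the closure of $\hc$ under translation of $r$, which is built into the definition of the sets $H_r$.
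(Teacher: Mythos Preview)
Your proof is correct and follows essentially the same approach as the paper: both define the condition $p=X^{<\omega}$, the name $\dot h$ as the concatenation of the Kat\v{e}tov images along the generic, and both use a shift of the target real $r$ to produce a coincidence via a single generator $H_{r_j}$ of $\hc$. The only differences are cosmetic---you argue directly rather than by contradiction---and you are more careful on two minor technical points that the paper glosses over: you explicitly remove $f^{-1}[\{\emptyset\}]$ so that $\dot h$ is guaranteed to be a total function (the paper's claim ``$k\geq m$'' tacitly requires this), and you spell out how to choose $\sigma$ in the Miller case.
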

\begin{proof}

The following argument applies to both forcing notions; the only difference lies in where the trees are taken from.

Fix $\id$ and $X$ as above, and let $\varphi:X\rightarrow\omega^{<\omega}$ witness that $\hc\leq_K \restr{\id}{X}$. Set $p=X^{<\omega}$. Let $\dot h$ be a name for the function 
\[
\varphi(\dot r_{gen}(0))^{\frown}\varphi(\dot r_{gen}(1))^{\frown}\cdots.
\]
We claim that 
\[
p\Vdash ``\dot h \text{ is a half Cohen real over } V." 
\]

Proceed by contradiction. Let $T\leq p$ and $f$ be a function such that 
\[
T\Vdash ``|f\cap \dot h|<\omega".
\]
Then we can find $T'\leq T$ and $m\in\omega$ such that 
\[
T'\Vdash ``\forall n\geq m\, (f(n)\neq \dot h(n))".
\]

By extending $T'$ if necessary, we may assume that $|stem(T')|\geq m$, so that
\[
\varphi[succ_{T'}(stem(T'))]\in \hc^+.
\]
Let $k$ be the length of the largest initial segment of $\dot h$ decided by $T'$. Then $k\geq m$. Define $f'(n)=f(n+k)$. Since 
\[
\varphi[succ_{T'}(stem(T'))]\not\subseteq H_{f'},
\]
there exist $j,i<\omega$ such that $\varphi(j)(i)=f'(i)=f(i+k)$. Therefore,
\[
T'_{stem(T')^\frown j}\Vdash ``\dot h(k+i)
    =(\restr{\dot h}{k}^\frown\varphi(j))(i)
    =\varphi(j)(i)
    =f(i+k),
"
\]
a contradiction.
\end{proof}

In order to relate this to other results in this work, as Theorem \ref{Cohen reals} and Theorem \ref{Laver property}, a natural question is:

\begin{question}
    If $\id$ is an ideal such that for all $X\in\id^+$ we have $\hc\not\leq_K \restr{\id}{X}$, then $\mil(\id^+)$ does not add half Cohen reals?
\end{question}

In contrast with Theorem \ref{Cohen reals} and Theorem \ref{Laver property}, the pure decision property cannot be used in this context. For this reason, the question is formulated in terms of $\mil$ rather than $\lav$.

Notice that even if the answer to this question is affirmative, it does not yield an affirmative answer to Zapletal's question mentioned at the beginning of this section.

Nevertheless, by Theorem \ref{hc reals} and Theorem \ref{Cohen reals}, it suffices to find an ideal $\id$ such that for all $X\in\id^+$,
\[
\hc\leq_K \restr{\id}{X} <_K \nwd.
\]

\begin{proposition}
    $\nwd\not\leq_K \hc$. 
\end{proposition}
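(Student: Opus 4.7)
The approach I will take closely parallels Proposition~\ref{nwd vs lf}: first I prove the statement under the auxiliary hypothesis $\add(\mathcal{M})<\cov(\mathcal{M})$, then invoke Shoenfield absoluteness (the negation of $\nwd\leq_K\hc$ is $\Pi^1_3$) to obtain the ZFC conclusion from its truth in a forcing extension realizing the hypothesis---for instance the one obtained by adding $\omega_2$-many Cohen reals over a model of $\CH$. So I assume $\add(\mathcal{M})<\cov(\mathcal{M})$ and, for contradiction, that $\varphi\colon\omega^{<\omega}\to\mathbb{Q}$ witnesses $\nwd\leq_K\hc$. By Lemma~\ref{swd} fix $\{N_\alpha:\alpha<\add(\mathcal{M})\}\subseteq\nwd$ such that every somewhere dense subset of $\mathbb{Q}$ meets some $N_\alpha$ infinitely, and from the Kat\v{e}tov hypothesis extract finite lists $r_0^\alpha,\dots,r_{n_\alpha}^\alpha\in\omega^\omega$ with $\varphi^{-1}[N_\alpha]\subseteq\bigcup_iH_{r_i^\alpha}$.

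Fix $S=\{2^n:n<\omega\}\subseteq\omega$. Each set $N_r=\{c\in\omega^\omega:c(n)\neq r(n)\text{ for all }n\in\omega\setminus S\}$ is nowhere dense in $\omega^\omega$, and the family $\{N_{r_i^\alpha}\}_{\alpha,i}$ has cardinality at most $\add(\mathcal{M})<\cov(\mathcal{M})$, so $\omega^\omega$ is not covered by its union and there exists $c\in\omega^\omega$ outside it; equivalently, for every $(\alpha,i)$ there is $n^*\in\omega\setminus S$ with $c(n^*)=r_i^\alpha(n^*)$. Define
\[
T_c=\{s\in\omega^{<\omega}:s(n)=c(n)\text{ for every }n<|s|\text{ with }n\notin S\}.
\]
Then $T_c\in\hc^+$: given $r_0,\dots,r_k$, take distinct $n_0,\dots,n_k\in S$ and let $s\in T_c$ of length $\max_j n_j+1$ satisfy $s(n_j)=r_j(n_j)$ at the special positions and $s(n)=c(n)$ elsewhere. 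Now pass to a thin $D\subseteq T_c$ containing exactly one element at each length and cofinal in $(T_c,\subseteq)$: enumerate $T_c=\{t_i\}_{i<\omega}$, assign each $t_i$ a distinct length $\ell_i\geq|t_i|$, set $d_{\ell_i}\in T_c$ to extend $t_i$ at length $\ell_i$, and fill in the remaining lengths arbitrarily. Cofinality transfers $\hc$-positivity from $T_c$ to $D$; on the other hand every $d\in D$ of length greater than $n^*$ satisfies $d(n^*)=c(n^*)=r_i^\alpha(n^*)$, so $D\cap H_{r_i^\alpha}$ consists of elements of $D$ of length at most $n^*$ and is therefore finite by the one-per-length property of $D$. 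Consequently $\varphi[D]\cap N_\alpha$ is finite for every $\alpha$, which by Lemma~\ref{swd} forces $\varphi[D]\in\nwd$; the Kat\v{e}tov assumption then yields $D\subseteq\varphi^{-1}[\varphi[D]]\in\hc$, contradicting $D\in\hc^+$.

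The main obstacle I expect is the simultaneous design of $T_c$ and $D$: the set $S$ must be both coinfinite (so that $c$'s agreements with each $r_i^\alpha$ at positions in $\omega\setminus S$ bound the length of the bad elements of $D\cap H_{r_i^\alpha}$) and infinite (so that the unconstrained coordinates in $S$ provide enough freedom to verify $T_c\in\hc^+$ against arbitrary finite tuples of reals), and the thin cofinal $D$ must be chosen carefully inside $T_c$ so that the one-per-length property survives cofinality. Once this balancing act is in place, the remaining pieces---the contradiction furnished by Lemma~\ref{swd} and the $\Pi^1_3$ downward absoluteness from the forced extension to $V$---are formal.
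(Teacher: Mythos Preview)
Your proof is correct and follows essentially the same strategy the paper indicates (the paper omits the proof, pointing to Proposition~\ref{nwd vs lf} and Proposition~\ref{nwd vs antich}). The only differences are cosmetic: the paper's template splits $\omega$ into even and odd coordinates via $\rho(g)(n)=g(2n)$ and phrases the choice of $c$ as ``a Cohen real for the family $\{\rho(r_i^\alpha)\}$'', while you split along $S=\{2^n:n<\omega\}$ and find $c$ directly as a point missing $<\cov(\mathcal{M})$ many nowhere dense sets. Both produce a tree $T_c$ that is $\hc$-positive via the free coordinates and eventually escapes each $H_{r_i^\alpha}$ via the constrained ones; your cofinality argument for transferring $\hc^+$ to the thin $D$ is exactly the density argument in the paper's analogous step.
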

\begin{proof}
    As we did in Theorem \ref{nwd vs lf}, we first prove this Theorem under the assumption of $\add(\mathcal{M})<\cov(\mathcal{M})$.

    Proceeding by contradiction, assume that there is $\varphi:\mathcal{P}_f\rightarrow \mathbb{Q}$ a Kat\v{e}tov function. Let $\{N_\alpha:\alpha\in \add(\mathcal{M})\}\subseteq\nwd$ be a  family such that for all $D\subseteq \mathbb{Q}$ somewhere dense there is $\alpha<\add(\mathcal{M})$ such that $D\cap N_\alpha$ is infinite, remember that this family exists by Lemma \ref{kere}. Thus, $\{\varphi^{-1}[N_\alpha]:\alpha<\add(\mathcal{M})\} \subseteq \hc $. For each $\alpha$, let $n_\alpha<\omega$, and let $r_0^\alpha, \dots, r_{n_\alpha}^\alpha\in\omega^\omega$  such that $\varphi^{-1}[N_\alpha] \subseteq \bigcup_{i\leq n_\alpha} H_{r_i^\alpha}$. Let $\rho:\omega^{\leq \omega}\rightarrow\omega^{\leq \omega}$ be a function such that $\rho(g)(n)=g(2n)$.

    We consider the family $\{\bar\rho(r_i^\alpha): \alpha<\add(\mathcal{M}) \wedge i\leq n_\alpha\}$. By Lemma \ref{barto cov}, there is $c:\omega\rightarrow \omega$ such that for all $\alpha<\add(\mathcal{M})$ and $i\leq n_\alpha$, there are infinitely many $n$ such that  $c(n)= \rho(r_i^\alpha)(n)$. We define $T^{c}=\{s\in \omega^\omega: \rho(s)\subseteq c \}$.

    Notice that $T^c\in \hc^+$, even more so that it is a full-Miller tree.
    
    Let $D\subseteq T^c$ be a dense subset of  $T^c$ such that $|\{s\in D: |s|=2n+1\}|=1$ for all $n<\omega$. By density, $D\in\lf^+$. We claim that $\varphi [D]\cap N_\alpha$ for all $\alpha<\add(\mathcal{M})$ is finite. Fix $\alpha$, notice that
    \[
    \varphi[D]\cap N_\alpha =\varphi[D\cap \varphi^{-1}[N_\alpha]]\subseteq \varphi[D\cap \bigcup_{i\leq n_\alpha}H_{r_i^\alpha}] = \varphi[\bigcup_{i\leq n_\alpha}(D\cap H_{r_i^\alpha})].
    \]

    Now, fix $i\leq n_\alpha$. It is enough to see that $D\cap H_{r_i^\alpha}$ is finite. By the property of $c$ there is $n$ such that $c(n)= \rho (r_i^\alpha)(n)=r_i^\alpha(2n)$. On the other hand, by the definition of $T^c$ if $s\in D\subseteq T^c$ and $|s|\geq2n+1$, then $s(2n)= c(n)=r_i^\alpha(2n)$. Therefore, $s\notin H_{r_i^\alpha}$. Thus, $D\cap H_{r_i^\alpha}$ contains only functions of length at most $2n$. The contradiction comes from the fact that $\varphi[D]$ is somewhere dense.

    Recall that the topology of $[\omega^\omega]^{<\omega}$ is such that, for each $n$, the inclusion map $\iota_n : [\omega^\omega]^n \to [\omega^\omega]^{<\omega}$ is continuous. Let
\[
B=\{(R,A): \exists n<\omega \,\forall r\in R \,\forall s\in A \,\forall \ell<|s| \,(s(\ell)\neq r(\ell))\}\subseteq [\omega^\omega]^{<\omega} \times \wp(\omega).
\]
    Then $B$ is $F_\sigma$, and the image of $B$ under the projection onto the second coordinate is $\hc$. Therefore, $\hc$ is analytic.

    As in the proof of Theorem~\ref{nwd vs lf}, the statement $\nwd\not\leq_K \hc$ is $\Pi^1_3$ and downward absolute. Since this statement holds after forcing $\mathrm{CH}$ and adding $\omega_2$ Cohen reals, it also holds in $V$.
\end{proof}

%The proof of this proposition is quite similar to the proof of Theorem \ref{nwd vs lf}, which, as we mentioned before, is inspired by Lemma 5.5 in \cite{fullmil}, as well as to the proof of Proposition \ref{nwd vs antich}. For these reasons, we omit the proof.

However, $\hc$ does not work for our goal. The next proposition is inspired by Lemma 2.7 in \cite{fullmil} proved, as the authors mention, by Goldstern and Shelah.

\begin{proposition}
    There is $T^{GS}\in\hc^+$ such that $\nwd\leq_K \restr{\hc}{T^{GS}}$.
\end{proposition}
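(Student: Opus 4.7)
The plan is to explicitly construct an $\hc$-positive set $T^{GS}\subseteq \omega^{<\omega}$ together with a Kat\v{e}tov reduction $\varphi:T^{GS}\to 2^{<\omega}$ witnessing $\nwd\leq_K\restr{\hc}{T^{GS}}$. Throughout, I fix a bijection $\iota:\omega\to 2^{<\omega}$ with $\iota(0)=\emptyset$, identifying natural numbers with finite binary strings.

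The guiding idea is that each $s\in T^{GS}$ should code, via its values read through $\iota$, a strictly increasing chain in $(2^{<\omega},\subseteq)$ along a prescribed set of ``chain'' coordinates, while carrying additional freedom on the remaining ``free'' coordinates used to guarantee $\hc$-positivity. Setting $\varphi(s)=\iota(s(k_s))$, where $k_s$ is the largest chain coordinate $<|s|$, the reduction $\varphi^{-1}(N)\in\hc$ for $N\in\nwd$ follows by an escape argument: given a nowhere dense tree $R\supseteq N$ and escape nodes $\tau_i\in 2^{<\omega}\setminus R$ at each chain coordinate $i$ (which exist since $R$ is proper), define $r\in\omega^\omega$ by $r(i)=\iota^{-1}(\tau_i)$ on chain coordinates and by a value excluded from $T^{GS}$ on free coordinates. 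If some $s\in\varphi^{-1}(N)\cap T^{GS}$ met $r$ at a chain coordinate $i$, then $\iota(s(i))=\tau_i\notin R$, and by downward closure of $R$ the terminal node $\varphi(s)=\iota(s(k_s))\supseteq \tau_i$ would also lie outside $R\supseteq N$, contradicting $\varphi(s)\in N$; at a free coordinate, $s(i)=r(i)$ would contradict the constraints of $T^{GS}$. Hence $\varphi^{-1}(N)\subseteq H_r\in\hc$.

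The main technical challenge is proving $T^{GS}\in\hc^+$, i.e.\ that for every finite family $r_1,\dots,r_n\in\omega^\omega$ there is $s\in T^{GS}$ meeting each $r_j$. A pure chain-tree $\{s:\iota(s(0))\subsetneq\iota(s(1))\subsetneq\cdots\}$ satisfies the Kat\v{e}tov reduction, but it is not $\hc^+$: the constant reals $r_j\equiv\iota^{-1}(\langle j\rangle)$ decode to pairwise incomparable strings and force every chain-like $s$ into $H_{r_1}\cup\cdots\cup H_{r_n}$. The free coordinates must therefore be dense enough and loose enough to absorb such obstructions, while still constrained enough that the evasion argument above goes through.

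To prove positivity, I would process the $r_j$'s one at a time: at the next free coordinate $i$ where $r_j(i)$ is compatible with the $T^{GS}$-constraint (i.e.\ not the unique forbidden value at that coordinate), set $s(i)=r_j(i)$; fill the chain coordinates in between with any legitimate chain extension in $\iota(2^{<\omega})$. The main obstacle I anticipate is the calibration of the chain/free coordinate scheme and the choice of the forbidden values: they must be arranged so that no adversarial choice of reals can systematically block every free coordinate while also avoiding the chain pattern. Verifying this combinatorial lemma — that positivity survives against every finite family of reals — is the essence of the construction, the rest being routine bookkeeping of the reduction $\varphi$.
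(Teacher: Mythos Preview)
Your reduction argument is sound, but the proposed $T^{GS}$ is not $\hc$-positive, and no arrangement of the chain/free scheme with a single forbidden value per free coordinate can repair this. Concretely: let $C$ be your set of chain coordinates, $c_i$ the forbidden value at free coordinate $i\notin C$, and pick $d_1,d_2$ with $\iota(d_1)=\langle 0\rangle$, $\iota(d_2)=\langle 1\rangle$. Define $r_j(i)=c_i$ for $i\notin C$ and $r_j(i)=d_j$ for $i\in C$ ($j=1,2$). No $s\in T^{GS}$ meets either $r_j$ at a free coordinate, so each must be met at some chain coordinate; but then $\iota(s(i))=\langle 0\rangle$ and $\iota(s(i'))=\langle 1\rangle$ would both occur along the strictly $\subseteq$-increasing $\iota$-chain of $s$, which is impossible since they are incomparable. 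Hence $T^{GS}\subseteq H_{r_1}\cup H_{r_2}\in\hc$. The incomparability obstruction you correctly diagnosed for the pure chain tree survives intact once the adversary neutralises the free coordinates by hitting the forbidden values --- so the ``combinatorial lemma'' you flag as the essence of the construction is in fact false for this scheme.

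The paper sidesteps this tension by abandoning the chain idea and using \emph{concatenation}. It takes for $T^{GS}$ a full-splitting Miller tree with exactly one splitting node $\tau_n$ of each length $n$ and $succ_{T^{GS}}(\tau)\in\{\{0\},\omega\}$ for every $\tau$; chosen so that splitting nodes are dense, this is an infinitely-often-equal tree and hence automatically in $\hc^+$ --- positivity costs nothing. With $\{s_n:n<\omega\}$ enumerating $2^{<\omega}$ and $s_0=\emptyset$, the reduction is $\varphi(\tau)=s_{\tau(0)}{}^\frown\cdots{}^\frown s_{\tau(|\tau|-1)}$. Given a nowhere dense tree $N$, for each $n$ pick $m_n>0$ with $\varphi(\tau_n){}^\frown s_{m_n}\notin N$, and set $r(n)=m_n$. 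If $\sigma(n)=r(n)>0$ for some $\sigma\in T^{GS}$, then $\sigma\upharpoonright n$ must be the unique splitting node $\tau_n$ (non-splitting nodes have only successor $0$), so $\varphi(\sigma)\supseteq\varphi(\tau_n){}^\frown s_{m_n}\notin N$; thus $\varphi^{-1}[N]\subseteq H_r$. The role your chain was meant to play --- forcing an escape node to sit below $\varphi(s)$ --- is here played by the purely structural fact that a nonzero value at level $n$ pins $\sigma\upharpoonright n$ to $\tau_n$, after which concatenation carries the escape into $\varphi(\sigma)$ with no compatibility condition between successive values.
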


\begin{proof}
    Let $T^{GS}$ be a full-Miller tree such that for each $n$ there is exactly one splitting node of length $n$ and for each $\tau\in T^{GS}$ we have $succ_T(\tau)\in \{\{0\},\omega\}$.
    
    Fix $\{s_n:n<\omega\}$ to be an enumeration of $2^{<\omega}$ such that $s_0=\emptyset$. We define $\varphi:T^{GS}\to2^{<\omega}$ as follows,
    \[
    \varphi^{-1}(\tau)=s_{\tau(0)} \,^\frown s_{\tau(1)} \,^\frown \, ... \,^\frown s_{\tau(|\tau|-1)}.
    \]

    To verify that $\varphi$ is a Kat\v{e}tov function, fix a nowhere dense set $N\subseteq 2^{\omega}$. For each $n<\omega$, there exists $m_n>0$ such that $\varphi(\tau_n)^\frown s_{m_n}\notin N$, where $\tau_n$ is the unique splitting node of length $n$.

    Now, define $r:\omega\rightarrow\omega$ as $r(n)=m_n$.
    
    We claim  $\varphi[N]\subseteq H_r\cap T^{GS}$. If $\sigma\notin H_r\cap T^{GS}$, there exists $n$ such that $\sigma(n)=r(n)>0$, then $\tau_n\subseteq \sigma$. By the definition of  $\varphi$ we have 
        \[
        \varphi(\sigma)\supseteq \varphi(\tau_n)^\frown s_{\sigma(n)}= \varphi(\tau_n)^\frown s_{r(n)}= \varphi(\tau_n)^\frown s_{m_n} \notin N.
        \]

    Thus, $\varphi(\sigma)\notin N$.
\end{proof}

    It is important to note that the tree $T^{GS}$ used in the previous proof is attributed to Goldstern and Shelah; see Lemma 2.17 in \cite{fullmil}. In that reference, the authors show that every condition $T\in\mathbb{IE}$ below $T^{GS}$ forces a Cohen real. They mention that it is not known that the conditions that add Cohen reals are dense in $\mathbb{IE}$.

    In our case, $\hc$-positive sets are not necessarily trees. In fact, there is $X\in \hc^+$ that is an antichain, even more.

    \begin{proposition} \label{nwd vs antich}
        There is $X\in \hc^+$ that is an antichain and $\nwd\not\leq_K\restr{\hc}{X}$.
    \end{proposition}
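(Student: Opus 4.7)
Plan. The proof has two ingredients: an explicit antichain $X\in\hc^+$, and a Kat\v{e}tov-theoretic incompatibility proved by the same absoluteness method used in Proposition~\ref{nwd vs lf}. For the antichain, I would take
\[
X=\{s\in\omega^{<\omega}:|s|\geq 1\text{ and }s(0)=|s|\}.
\]
If $s\subsetneq s'$ with $s,s'\in X$, then $s'(0)=s(0)$ forces $|s'|=|s|$, a contradiction; so $X$ is an antichain. For positivity, given $r_1,\dots,r_k\in\omega^\omega$, pick $n\geq k+1$ and distinct positions $i_1,\dots,i_k\in\{1,\dots,n-1\}$, and set $s=(n,a_1,\dots,a_{n-1})$ with $a_{i_j}=r_j(i_j)$ and the remaining entries zero: then $s\in X$ meets every $r_j$, so $X\in\hc^+$.

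For $\nwd\not\leq_K\restr{\hc}{X}$, the strategy mirrors Proposition~\ref{nwd vs lf}: the statement is $\Pi^1_3$ (since $\nwd$ is Borel and $\hc$ is analytic), so by Shoenfield absoluteness it suffices to verify it in a universe satisfying $\add(\mathcal{M})<\cov(\mathcal{M})$, such as a Cohen extension of a model of $\CH$. Assume this inequality, and suppose toward contradiction that $\varphi\colon X\to\mathbb{Q}$ is Kat\v{e}tov. By Lemma~\ref{swd}, fix $\{N_\alpha:\alpha<\add(\mathcal{M})\}\subseteq\nwd$ such that every somewhere dense $D\subseteq\mathbb{Q}$ meets some $N_\alpha$ in an infinite set; for each $\alpha$, fix $m_\alpha<\omega$ and $r_\alpha^1,\dots,r_\alpha^{m_\alpha}\in\omega^\omega$ with $\varphi^{-1}[N_\alpha]\subseteq\bigcup_{j\leq m_\alpha}H_{r_\alpha^j}$. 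Since $\cov(\mathcal{M})>\add(\mathcal{M})$, a Baire-category argument (the intersection of the comeager sets $\{c:\exists\,i>N\text{ odd},\ c(i)=r_\alpha^j(i)\}$ is nonempty) produces $c\in\omega^\omega$ such that for every $(\alpha,j)$ the set $\{i\text{ odd}:c(i)=r_\alpha^j(i)\}$ is infinite.

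Define $Y=\{s\in X:s(i)=c(i)\text{ for every odd }i\text{ with }1\leq i<|s|\}$. Each $Y\cap H_{r_\alpha^j}$ is finite: any $s\in Y\cap H_{r_\alpha^j}$ of length $n$ satisfies $c(i)=s(i)\neq r_\alpha^j(i)$ for every odd $i<n$, so $n$ is bounded by the first odd index of agreement between $c$ and $r_\alpha^j$. Also $Y\in(\restr{\hc}{X})^+$: given arbitrary $r_1,\dots,r_k\in\omega^\omega$ and $n=2k+2$, assign distinct free even positions $2m_j\in\{2,\dots,n-2\}$ to each $r_j$, set $s(2m_j)=r_j(2m_j)$, put $s(0)=n$, fill odd entries via $c$, and the remaining even entries with zero; this $s\in Y$ hits every $r_j$. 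Since $\varphi$ is Kat\v{e}tov, $\varphi[Y]\in\nwd^+$, so $\varphi[Y]$ is somewhere dense and meets some $N_\alpha$ infinitely. But
\[
\varphi[Y]\cap N_\alpha\subseteq\bigcup_{j\leq m_\alpha}\varphi[Y\cap H_{r_\alpha^j}]
\]
is finite, a contradiction.

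The main obstacle is engineering $Y$ to balance two competing demands: enough flexibility to remain $\hc$-positive against arbitrary finite families of reals (even ones introduced by the forcing), yet rigid enough that each $H_{r_\alpha^j}$ intersects $Y$ only finitely. The even/odd split of positions is the natural substitute here for the $\rho$-slicing of Lemma~\ref{a<c}: odd entries are \emph{spent} matching the Cohen real $c$ to control the fixed family, while even entries remain free for diagonalizing against new reals.
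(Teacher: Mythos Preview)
Your overall strategy matches the paper's: the same choice of antichain (up to a trivial shift, the paper uses $X=\{|s|^\frown s:s\in\omega^{<\omega}\}$), the same absoluteness reduction to $\add(\mathcal{M})<\cov(\mathcal{M})$, the same even/odd splitting of coordinates, and the same Cohen-real trick. However, there is a genuine gap in the final step.

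You claim that each $Y\cap H_{r_\alpha^j}$ is finite because the length of any $s$ in this set is bounded by the first odd index of agreement between $c$ and $r_\alpha^j$. The length bound is correct, but it does \emph{not} imply finiteness: your set $Y$ contains infinitely many elements of each fixed length $n\geq 3$, since the even positions $2,4,\dots$ are completely free. Concretely, if the first odd agreement occurs at position $3$, then every $s=(3,c(1),a)$ with $a\neq r_\alpha^j(2)$ (and $3\neq r_\alpha^j(0)$, $c(1)\neq r_\alpha^j(1)$) lies in $Y\cap H_{r_\alpha^j}$, and there are infinitely many such $a$. So $\varphi[Y\cap H_{r_\alpha^j}]$ need not be finite, and the contradiction does not follow.

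The fix, which is exactly what the paper does, is to thin $Y$ down to a subset $D$ that has at most one element of each length (equivalently, strictly increasing lengths) while remaining $\hc$-positive. This is arranged by a density argument on the free coordinates: enumerate all finite partial functions $p_n$ with domain contained in the even positions $\geq 2$, and recursively choose $t_n\in Y$ with $p_n\subseteq t_n$ and $|t_n|>|t_i|$ for all $i<n$. The set $D=\{t_n:n<\omega\}$ is then positive (given $r_1,\dots,r_k$, pick a $p$ matching each $r_j$ at a distinct even position), and now your length bound \emph{does} give $|D\cap H_{r_\alpha^j}|<\omega$, since $D$ has only finitely many elements below any given length.
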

    \begin{proof}
        We define $X=\{|s|^\frown s:s\in\omega^{<\omega}\}$. 
        The argument of this proof is quite similar to the proof of Proposition \ref{nwd vs lf}. The same absoluteness argument allows us to show just that $\nwd\not\leq_K\restr{\hc}{X}$ holds under $\add(\mathcal{M})<\cov(\mathcal{M})$. In the same way that Lemma \ref{a<c}, we can find $\{N_\alpha:\alpha\in \add(\mathcal{M})\}\subseteq\nwd$ a  family such that for all $D\subseteq \mathbb{Q}$ somewhere dense there is $\alpha<\add(\mathcal{M})$ such that $D\cap N_\alpha$ is infinite. Proceeding by contradiction, assume that there is $\varphi:X \rightarrow \mathbb{Q}$ a Kat\v{e}tov function. Therefore, $\{\varphi^{-1}[N_\alpha]:\alpha<\add(\mathcal{M})\} \subseteq \hc\upharpoonright X$.
        
        For each $\alpha$, let $n_\alpha<\omega$, and let $r_0^\alpha, \dots, r_{n_\alpha}^\alpha\in \omega^\omega$ be such that $\varphi^{-1}[N_\alpha] \subseteq \bigcup_{i\leq n_\alpha} H_{r_i^\alpha}$. Let $\rho:\omega^{\leq \omega}\rightarrow\omega^{\leq \omega}$ be a function such that $\rho(g)(n)=g(2n)$.

        By Lemma \ref{swd} there is $c:\omega\rightarrow \omega$ a Cohen real for the family $\{\rho(r^\alpha_i):\alpha<\add(\mathcal{M}) \wedge i\leq n_\alpha\}$ and define
        \[
        T^{c}=\{s\in X: \forall 0<i<|\rho(s)| \,(\rho(s)(i)=c(i))\}.
        \]

        It is not difficult to notice that given $\{r_i:i<n\}\subseteq \omega^\omega$ there is $s\in T^c$ with $s(0)=2n$ such that $s\notin \bigcup_{i<n}H_{r_i}$.

        Let $\{p_n:n<\omega\}$ be an enumeration of $\{p;\omega\to\omega : dom(p)\subseteq 2\mathbb{N}+1\}$. Naturally, $p;\omega\to\omega$ means that $p$ is a partial and finite function and $2\mathbb{N}+1$ is the set of odd natural numbers. Recursively, we can build $D=\{t_n:n<\omega\}\subset T^c$ such that:
        \begin{enumerate}
            \item $p_n\subseteq t_n$ and
            \item for all $i<n$, $|t_n|>|t_i|$.
        \end{enumerate}

        By $(1)$, $D\in (\restr{\hc}{X})^+$. We claim that $\varphi[D]\cap N_\alpha$ is finite for all $\alpha<\add(\mathcal{M})$. To get this, it is enough to fix $\alpha<\add(\mathcal{M})$ and $i<n_\alpha$ and verify that $D\cap H_{r^\alpha_i}$ is finite. Let $n>0$ be such that $c(n)=\rho(r^\alpha_i)(n)=r^\alpha_i(2n)$. If $s\in D\subseteq T^c$ and $|s|\geq 2n+1$, then $s(2n)=c(n)=r^\alpha_i(2n)$. Therefore $s\notin H_{r^\alpha_i}$.
    \end{proof}

    \section{A particular case of the 1-1 or constant property in Laver forcing associated with a co-ideal.}

In \cite{poke} the authors study the Marczewski ideal associated with certain specific tree forcing notions. One of the tools used in their analysis is the 1-1 or constant property,  since if a forcing has this property, then the corresponding Marczewski ideal has cofinality greater than $\mathfrak{c}$. In particular, they ask whether $\lav((\fin \times \fin)^+)$ has the 1-1 or constant property (see Question 2.15 in \cite{poke}). In this section, we answer this question affirmatively.

\begin{definition}[Brendle, Khomskii and Wohofsky \cite{brendle}]
    Let $\id$ be an ideal on $\omega$. We say that $\li$ has the \emph{1-1 or constant property} if for each $T\in \li$ and each $f:[T]\rightarrow 2^\omega$ there exists $S\leq T$ such that $\restr{f}{[S]}$ is injective or constant.
\end{definition}

Since we are interested in the case of $\fin \times \fin$, in this section we set $\mathcal{I}=\fin \times \fin$ (for short). 

Recall that given $\tau \in 2^{<\omega}$, we denote the set $\{f\in 2^{\omega}: \tau\subseteq f \}$ by $\langle\tau \rangle$.% (similarly to $\tau\in \omega^{<\omega}$).

\begin{lemma}\label{convergencia}
    Let $T\in \li$ and $f:[T]\rightarrow 2^\omega$. For all $\tau\in T$ there are $T'\leq_0T_\tau$, $x_\tau\in 2^{\omega}$ and an increasing sequence $(N_k)_{k\in\omega}$  such that for all $(i,j)\in succ_{T'}(\tau)$, if $i\geq N_k$, then $[T'_{\tau^\frown(i,j)}]\subseteq f^{-1}(\langle \restr{x_\tau }{|\tau|+k}\rangle)$.
\end{lemma}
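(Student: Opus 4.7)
The plan is to build $T'$ by a diagonal fusion, determining $x_\tau$ one bit at a time while committing to one new thick column of $succ_{T'}(\tau)$ per stage. As a preliminary step, by continuous reading of names applied to the $\li$-name $f(\dot r_{gen})$ I may refine $T_\tau$ so as to assume without loss of generality that $f$ is continuous on $[T_\tau]$. Set $A = succ_T(\tau)$, $A_i = \{j : (i,j) \in A\}$, and $I_0 = \{i : A_i \text{ infinite}\}$; positivity of $A$ in $\fin\times\fin$ makes $I_0$ infinite.

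The recursion maintains at stage $k$: a decreasing chain $I_0 \supseteq I_1 \supseteq \cdots \supseteq I_k$ of infinite sets, for each $i \in I_k$ an infinite $B^k_i \subseteq B^{k-1}_i$ (with $B^0_i = A_i$), for each $(i,j) \in I_k \times B^k_i$ a Laver condition $S^k_{(i,j)} \leq_0 S^{k-1}_{(i,j)}$ (with $S^0_{(i,j)} = T_{\tau^\frown(i,j)}$), and a prefix $\restr{x_\tau}{|\tau|+k} \in 2^{|\tau|+k}$, subject to the invariant $[S^k_{(i,j)}] \subseteq f^{-1}(\langle \restr{x_\tau}{|\tau|+k}\rangle)$. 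The initial bits $\restr{x_\tau}{|\tau|}$ are fixed by iterating the pure decision property $|\tau|$ times and then applying pigeonhole twice (first within each column, then across columns) so that the chosen prefix is uniform across an infinite set of surviving columns and rows. For the step $k \to k+1$, pure decision applied to each $S^k_{(i,j)}$ with respect to the $\li$-name for the bit $f(\dot r_{gen})(|\tau|+k)$ yields $S'_{(i,j)} \leq_0 S^k_{(i,j)}$ together with a bit $b_{(i,j)} \in \{0,1\}$ that it forces; continuity of $f$ upgrades this to the pointwise statement that $f(y)(|\tau|+k) = b_{(i,j)}$ for every $y \in [S'_{(i,j)}]$. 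Pigeonholing the bits $b_{(i,j)}$ within each column $i$ and then across columns produces infinite $I_{k+1} \subseteq I_k$ and $B^{k+1}_i \subseteq B^k_i$ (for $i \in I_{k+1}$) together with a common bit $b$; I set $x_\tau(|\tau|+k) = b$ and $S^{k+1}_{(i,j)} = S'_{(i,j)}$, preserving the invariant.

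To fuse, I pick a strictly increasing sequence $n^*_0 < n^*_1 < \cdots$ with $n^*_k \in I_k$ (possible since each $I_k$ is infinite), and define $T'$ to have stem $\tau$ with $succ_{T'}(\tau) = \bigsqcup_k \{n^*_k\} \times B^k_{n^*_k}$ and $T'_{\tau^\frown(n^*_k, j)} = S^k_{(n^*_k,j)}$; set $N_k = n^*_k$. Then $T' \in \li$ because $succ_{T'}(\tau)$ contains infinitely many thick columns $n^*_k$, each with infinite row $B^k_{n^*_k}$, and the Laver structure above each committed successor is inherited from $S^k_{(n^*_k,j)}$. For the conclusion, if $(i,j) \in succ_{T'}(\tau)$ satisfies $i \geq N_k$, then $i = n^*_{k'}$ for some $k' \geq k$, so $[T'_{\tau^\frown(i,j)}] = [S^{k'}_{(n^*_{k'},j)}] \subseteq f^{-1}(\langle \restr{x_\tau}{|\tau|+k'}\rangle) \subseteq f^{-1}(\langle \restr{x_\tau}{|\tau|+k}\rangle)$.

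The main subtlety is the coordination between the pigeonhole (which must select a single bit $b$ surviving across an infinite family of columns) and the preservation of $\fin \times \fin$-positivity of the pruned successor set at $\tau$. Committing exactly one new thick column per stage, rather than attempting to freeze all surviving columns at once, is the diagonal device that reconciles these demands and simultaneously yields the strictly increasing thresholds $N_k = n^*_k$.
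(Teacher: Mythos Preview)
Your proof is correct and follows the same overall scheme as the paper's --- determine $x_\tau$ bit by bit via pure decision, then fuse by committing one thick column per stage --- but your implementation is more laborious than necessary. You apply pure decision to each $S^k_{(i,j)}$ (with stem $\tau^\frown(i,j)$) and then pigeonhole the resulting bits $b_{(i,j)}$ twice (within columns, then across columns) to extract a common value. The paper instead applies pure decision directly to the tree with stem $\tau$, obtaining a single $\leq_0$-decreasing chain $S^0 \geq_0 S^1 \geq_0 \cdots$ of conditions in $\li$ with $S^n$ already deciding $\restr{f(\dot r_{gen})}{|\tau|+n}$; this eliminates the pigeonhole bookkeeping entirely, since the bit is fixed at the stem level rather than reassembled from the successors. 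The diagonalization over columns is then essentially identical in both arguments: the paper sets $succ_{T'}(\tau)$ to be the union over $n$ of the $n$-th infinite column of $succ_{S^n}(\tau)$, with $N_k$ the first coordinate of the $k$-th such column, which matches your choice $N_k = n^*_k$. One point in your favour: you make the reduction to continuous $f$ explicit via continuous reading of names, which is what justifies passing from the forcing statement to the pointwise inclusion $[S'_{(i,j)}]\subseteq f^{-1}(\langle\cdots\rangle)$; the paper uses this passage as well but leaves it implicit.
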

\begin{proof}
    Using the pure decision property, we build a $\leq_0$-sequence $(S^n)_{n<\omega}$ of elements of $\li$ such that $S^n\Vdash``\restr{f(\dot r_{gen})}{|\tau|+n}=x_n"$ for some $x_n$. Let $x_\tau=\bigcup_{n<\omega}x_n$ and for each $n<\omega$ set $A_n=succ_{S^n}(\tau)$. For each $i<\omega$, we denote the $i$-th infinite column of $A_n$ by $A_n(i)$. %Without lost of generality we assume that $A_n(i)=\em$
    Let $B=\bigcup\{A_n(n):n<\omega\}\in\id^+$.
    Define $T'$ as follows:
    \begin{enumerate}
        \item $stem(T')=\tau$,
        \item $succ_{T'}(\tau)=B$ and
        \item for each $(i,j)\in B$, if $n=max\{m:(i,j)\in A_m\}$, then $T'_{\tau^\frown(i,j)}=S^n_{\tau^\frown(i,j)}$.
    \end{enumerate}

    Let $\pi_0$ be the projection on the first input; then we define $N_k=\pi_0(A_k(k))$, that is, the $k$-th column of $B$.
    
    Now, we prove that $S, x_\tau$ and $(N_k)_{k<\omega}$ work. Fix $k<\omega$ and take $(i,j)\in B$ with $i\geq N_k$. There is $n<\omega$ greater than $k$ such that $i=N_n$. Note that, since $(i,j)\in B$ and $(A_m)_{m<\omega}$ is $\subseteq$-decreasing, for all $\ell>n$ we have $(i,j)\notin A_\ell(\ell)$. Thus, $T'_{\tau^\frown(i,j)}=S^n_{\tau^\frown(i,j)}$ and $S^n\Vdash``\restr{f(\dot r_{gen})}{|\tau|+n}=x_n"$. If $y\in [T'_{\tau^\frown(i,j)}]$, since $S^n\Vdash ``f(y)\in\langle x_n\rangle"$ we get $y\in f^{-1}(\langle\restr{x_\tau}{|x_n|}\rangle)=f^{-1}(\langle\restr{x_\tau}{|\tau|+n}\rangle)\subseteq f^{-1}(\langle\restr{x_\tau}{|\tau|+k}\rangle)$.
\end{proof}

\begin{lemma}\label{convergencia fusión}
    Let $T\in \li$ and $f:[T]\rightarrow 2^\omega$. There is $T'\in \li$ with $T'\leq_0 T$ such that for all $\tau\in T'$ there is $x_\tau\in 2^\omega$, and an increasing sequence $(N_k^\tau)_{k<\omega}$ such that if $(i,j)\in succ_{T'}(\tau)$ and $i\geq N_k^\tau$, then $[T'_{\tau^\frown(i,j)}]\subseteq f^{-1}(\langle\restr{x_\tau}{|\tau|+k}\rangle)$.
\end{lemma}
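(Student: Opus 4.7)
The plan is to carry out a standard Laver-style fusion, applying Lemma \ref{convergencia} once at each node of the final tree. First, enumerate the nodes of $T$ lying above $stem(T)$ as $\{\tau_n:n<\omega\}$ in such a way that $|\tau_m|\leq|\tau_n|$ whenever $m\leq n$; the order within a given level is irrelevant.

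Next, recursively construct a $\leq_0$-decreasing sequence $T=T^0\geq_0 T^1\geq_0\cdots$ as follows. At stage $n$, if $\tau_n\notin T^n$ let $T^{n+1}=T^n$; otherwise apply Lemma \ref{convergencia} to $T^n$ at $\tau_n$ to obtain $S^n\leq_0 T^n_{\tau_n}$ together with $x_{\tau_n}\in 2^\omega$ and an increasing sequence $(N_k^{\tau_n})_{k<\omega}$ witnessing its conclusion. Form $T^{n+1}$ by gluing $S^n$ in place of $T^n_{\tau_n}$: keep every node of $T^n$ that is not a strict extension of $\tau_n$, and for strict extensions of $\tau_n$ take the corresponding nodes of $S^n$. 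Finally set $T'=\bigcap_{n<\omega}T^n$; for nodes $\tau\in T'$ lying strictly below $stem(T')$ pick $x_\tau$ arbitrarily (say $x_\tau = x_{stem(T')}$) and $N_k^\tau$ larger than the first coordinate of the unique immediate successor of $\tau$ in $T'$, so that the implication in the conclusion is vacuous at those nodes.

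The main obstacle is the bookkeeping required to guarantee that $T'$ is still a legitimate condition in $\li$: one must check that once $\tau_n$ has been treated, the immediate-successor set $succ_{T^m}(\tau_n)=succ_{S^n}(\tau_n)\in\id^+$ stays frozen for every $m\geq n+1$. This holds because our enumeration forces $|\tau_m|\geq|\tau_n|$ whenever $m>n$, so the only way a gluing at $\tau_m$ could delete some $\tau_n^\frown(i,j)$ is if $\tau_m=\tau_n^\frown(i,j)$; but then $\tau_n^\frown(i,j)=stem(S^{m-1})$ is kept rather than removed. Consequently $succ_{T'}(\tau_n)=succ_{S^n}(\tau_n)\in\id^+$ and $T'\in\li$. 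Moreover, every $\tau\in T'$ with $\tau\supseteq stem(T)$ equals some surviving $\tau_n$, and the inclusion $T'_{\tau_n^\frown(i,j)}\subseteq S^n_{\tau_n^\frown(i,j)}$ transfers the conclusion $[T'_{\tau_n^\frown(i,j)}]\subseteq f^{-1}(\langle\restr{x_{\tau_n}}{|\tau_n|+k}\rangle)$ for $i\geq N_k^{\tau_n}$ verbatim from Lemma \ref{convergencia}; once the fusion bookkeeping is in place, the real content of the statement is delivered entirely by the previous lemma.
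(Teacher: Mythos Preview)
Your proposal is correct and follows exactly the approach the paper indicates: the paper's own proof consists of the single sentence ``The way to prove this Lemma is using a fusion argument and in each step apply Lemma \ref{convergencia}'', and you have supplied precisely the standard bookkeeping for that fusion. One cosmetic slip: in the frozen-successor argument you write $stem(S^{m-1})$ where $stem(S^m)$ is meant, and the phrasing ``the only way \ldots\ is if $\tau_m=\tau_n^\frown(i,j)$'' is slightly off (the real point is that $\tau_n^\frown(i,j)$ can never be a \emph{strict} extension of $\tau_m$ once $|\tau_m|\geq|\tau_n|$), but the conclusion and the overall argument are sound.
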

The way to prove this Lemma is using a fusion argument and in each step apply Lemma $\ref{convergencia}$.

As consequences of Lemma \ref{convergencia fusión} we have for each $\tau\in T'$:
\begin{enumerate}
    \item $[T'_\tau]\subseteq f^{-1}(\langle \restr{x_\tau}{|\tau|} \rangle)$
    \item The sequences $(x_{\tau^\frown(i,j)})_{(i,j)\in succ_{T'}(\tau)}$ $(\fin\times \emptyset)$-converge to $x_\tau$, i.e., for all $N<\omega$ there is $m<\omega$ such that for all $i\geq m$, $x_{\tau^\frown(i,j)}\in \langle \restr{x_\tau}{N} \rangle$.
\end{enumerate}

\begin{theorem}
    $\li$ has the one-to-one or constant property. 
\end{theorem}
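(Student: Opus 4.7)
The plan is to use Lemma~\ref{convergencia fusión} to encode $f$ via the tags $x_\tau$ and then dichotomize on whether these tags converge to a common limit along some subtree.

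First, apply Lemma~\ref{convergencia fusión} to obtain $T' \leq_0 T$ with tags $x_\tau \in 2^\omega$, and then prune each $succ_{T'}(\tau)$ to $\{(i,j) : i \geq N_1^\tau\}$; only finitely many columns are removed, so the successor sets remain in $(\fin\times\fin)^+$, and after the prune $x_{\tau^\frown(i,j)}$ and $x_\tau$ agree on the first $|\tau|+1$ bits for every successor. Combined with consequence~(1), this gives $f(y)(k) = x_{y\restriction n}(k)$ whenever $n > k$. Define $g : T' \to 2^{<\omega}$ by $g(\tau) = x_\tau\restriction|\tau|$; the agreements force $g$ to be $\subseteq$-monotone along chains, and $f(y) = \bigcup_n g(y\restriction n)$ for every branch $y \in [T']$.

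Now dichotomize on whether there is $S \leq T'$ and $c \in 2^\omega$ with $g(\tau) \subseteq c$ for every $\tau \in S$. If so, $f(y) = c$ for every $y \in [S]$, so $f|_{[S]}$ is constant and we are done. Otherwise, no such $S, c$ exist; in particular, for every $\sigma \in T'$, $T'_\sigma$ admits no internal stabilization, so every $S' \leq_0 T'_\sigma$ contains $\subseteq$-incomparable nodes $\tau_1, \tau_2$ whose $g$-values $g(\tau_1), g(\tau_2)$ are $2^{<\omega}$-incompatible.

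In this non-stabilization case, build $S \leq T'$ with $f|_{[S]}$ injective by fusion. At each stage, with current tree $T_n \leq T'$ and committed splitting nodes $\sigma_0, \ldots, \sigma_{n-1}$, enumerate the pairs of these nodes whose corresponding sub-branches have not yet been $g$-distinguished; for each such pair, use non-stabilization in the relevant subtree to find incomparable extensions with incompatible $g$-values, and refine $T_n$ so the corresponding sub-branches are forced to diverge at a newly introduced tag bit. The $(\fin\times\emptyset)$-convergence lets the chosen positive successor sets keep earlier tag bits fixed on cofinitely many columns, so previous commitments are preserved. The resulting $S$ is an $\li$-condition on which any two distinct branches eventually pass through $g$-incompatible nodes, giving $f(y_1) \ne f(y_2)$.

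The main obstacle is the bookkeeping of this fusion. Because the $g$-values of the immediate successors of any node $\tau$ are all forced equal to $x_\tau\restriction|\tau|+1$, splitting into two positive parts with incompatible $g$-values cannot happen at a single level — one must go deeper, using non-stabilization to locate incomparable nodes whose $g$-values diverge. Arranging this deeper separation simultaneously for all committed pairs, while maintaining positivity in $\fin\times\fin$ at every splitting node and using $(\fin\times\emptyset)$-convergence to protect earlier commitments, is the delicate part.
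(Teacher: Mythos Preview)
Your setup through the constant case is correct and matches the paper: you apply Lemma~\ref{convergencia fusión}, define $g(\tau)=x_\tau\upharpoonright|\tau|$, verify $f(y)=\bigcup_n g(y\upharpoonright n)$, and dichotomize on whether some $S\leq T'$ has all $g$-values compatible with a single $c\in 2^\omega$. This dichotomy is equivalent to the paper's rank dichotomy (some $\rho(\tau)$ undefined versus all defined), and your constant case is fine.

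The gap is in the injective case. Your non-stabilization only says that every $S'\leq_0 T'_\sigma$ contains \emph{two} nodes with incompatible $g$-values. This is too weak to run a Laver-style fusion: to keep $S\in\li$ you must keep every successor set $(\fin\times\fin)$-positive, so you cannot simply pass from a committed splitting node $\sigma$ to a single descendant $\mu$ in order to force a particular $g$-value --- that would collapse $succ_S(\sigma)$. What you actually need is, below each committed node, a \emph{positive} family of extensions carrying pairwise incompatible tags, and nothing in your outline produces that; the phrase ``use non-stabilization in the relevant subtree to find incomparable extensions'' yields two nodes, not a positive set.

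The paper supplies the missing device via an ordinal rank $\rho$: $\rho(\tau)=0$ iff $\{(i,j):x_{\tau^\frown(i,j)}\neq x_\tau\}\in\id^+$. In the injective case every rank is defined, so from any node one can descend (with positive branching at each step) to a rank-$0$ node. This yields, below each committed $\sigma$, a well-founded bushy set $A_\sigma$ terminating in rank-$0$ leaves, at each of which positively many successors have tags differing from $x_\tau$. Even then, those tags may first disagree with $x_\tau$ at varying coordinates; the paper introduces a first-difference function $\Delta$ and a $(\fin\times\fin)$-specific Ramsey reduction (either $\Delta$ is one-to-one on a positive set, or constant on columns and injective across columns) to extract positive successor sets $B_m(\sigma)$ whose associated initial segments $a_\tau$ are pairwise incompatible across the different rank-$0$ leaves. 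This rank-plus-$\Delta$ machinery is the substantive missing content, not mere bookkeeping.
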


\begin{proof}
    Let $T\in \mathbb{L}(\mathcal{I}^+)$. By Lemma \ref{convergencia fusión}, there exist $T'\leq_0 T$ and a family $\{x_\tau, (N_k^\tau)_{k<\omega} : \tau \in T'\}$ as specified in the statement of the lemma.

    We define a rank function $\rho:T'\rightarrow\omega_1$ as follows:
    \begin{enumerate}
        \item $\rho(\tau)=0$ if and only if $\{(i,j)\in succ_{T'}(\tau):x_{\tau^\frown(i,j)}\not =x_\tau\}\in\id^+$.
        \item $\rho(\tau)=\alpha$ if $\alpha$ is the least such that $\{(i,j):\rho(\tau^\frown(i,j))<\alpha\}\in\id^+$.
    \end{enumerate}

    Consider two cases.
    
    Case 1: There exists $\tau$ such that $\rho(\tau)$ is not defined (or defined as $\omega_1$). With this assumption, we can get $S\leq_0 T'_\tau$ such that for all $\sigma \in S$, $\rho(\sigma)$ is not defined. In particular, $x_\sigma=x_\tau$  for all $\sigma \in S$ and $\tau\subseteq \sigma$. We claim that $\restr{f}{[S]}$ is constant. Fix $y\in [S]$. By construction,  we know that  $[S_{\restr{y}{k}}]\subseteq f^{-1}(\langle \restr{x_\tau}{k} \rangle)$ for all $k<\omega$. Therefore, $f(y)=x_\tau.$ When we unfix $y$, we prove the claim.

    Case 2: For all $\tau$ such that $\rho(\tau)$ is defined. To build $S$ we will define the next items recursively:
    \begin{enumerate}[(i)]
        \item $\{F_n:n<\omega\}$ antichains of $T'$, %such that for all $n$ and each $\sigma\in F_{n+1}$ there is 
        \item $\{A_\sigma:n<\omega\wedge \sigma\in F_n \}$ subsets of $T'$,
        \item $S=\{\tau\in T':\exists n<\omega(\tau\in F_n \vee \exists \sigma\in F_n (\tau\in A_\sigma) ) \}$ a subtree of $T'$ and
        \item $\{a_\tau\subseteq x_\tau: \exists n<\omega\,\exists \tau\in F_n  ( a_\tau \text{ is a finite initial segment}) \}$
    \end{enumerate}
    such that if we define $A_\sigma ^0=\{\tau\in A_\sigma:\rho(\tau)=0\}$ for each $n<\omega$ and $\sigma\in F_n$, then:
        \begin{enumerate}
        \item If $\tau\in  A_\sigma^0$, then there is no $\tau'\in A_\sigma$ with $\tau'<\tau$. If  $\tau \in A_\sigma\setminus A_\sigma^0$ and  $\rho(\tau)=\alpha>0$, then there is $\tau'\in A_\sigma$ such that $\tau\subseteq \tau'$ and $\rho(\tau)<\alpha$;
        \item  if $\sigma\in F_n$ and $\tau \in F_{n+1}$ with $\sigma\subseteq \tau$, then $a_\sigma\subseteq a_\tau$; %If $a_\sigma\not\subseteq a_\tau$, then $\sigma\not\subseteq\tau$ %
        \item for each $\sigma\in F_n$ and $\tau\in A_\sigma^0$, $[S_\tau]\subseteq f^{-1}(\langle a_\tau \rangle)$ and
        \item for all $n<\omega$,  $\sigma,\sigma'\in F_n$, $\tau\in A_\sigma^0$, $\tau'\in A_{\sigma'}^0$, $(i,j)\in succ_S(\tau)$ and $(i',j')\in succ_S(\tau')$, we have that $(\tau\neq \tau')$ implies $\langle a_{\tau^\frown(i,j)} \rangle\cap \langle a_{\tau'^\frown(i',j')} \rangle=\emptyset$.
        %\item Let $\sigma,\sigma'\in F_n$,  $\tau\in A_\sigma^0$ and $\tau'\in A_{\sigma'}^0$ if $\tau\neq \tau'$, then $\langle a_\tau \rangle\cap \langle a_{\tau'} \rangle=\emptyset$.
    \end{enumerate}

Let us first verify that $S$ is the tree we are looking for, and then proceed to construct the objects mentioned above.

Take $y,y'\in [S]$ with $y\neq y'$. Let $n<\omega$ be the maximum that there is $\sigma \in F_n$ such that $\sigma\subseteq y,y'$.  Let $\tau,\tau'\in F_{n+1}$ be such that $\sigma\subseteq \tau\subseteq y$ and $\sigma\subseteq \tau'\subseteq y'$. By (3), $f(y)\in \langle a_\tau\rangle$ and $f(y')\in \langle a_{\tau'}\rangle$. Using the maximality of  $n$, we can find $\mu\in A_\sigma^0 $, $\mu'\in A_{\sigma'}^0$ with $\mu\neq \mu'$ and $(i,j),(i',j')\in \omega\times \omega$ such that $\tau=\mu^\frown(i,j)$ and $\tau'=\mu'\frown(i',j')$. By (4), we are done.

Now we start with the recursive construction. 

    For $n=0$. Let $F_0=\{stem(T)\}$, and $a_{stem(T')}=\restr{x_{stem(T')}}{|stem(T')|}$. The construction of $A_{stem(T')}$ is also recursive. We put $\{stem(T')\}\in A_{stem(T')}$. If $\sigma\in A_{stem(T')}$ and $\rho(\sigma)=0$, then $\sigma$ does not have successors in $A_{stem(T')}$. If $\rho(\sigma)=\alpha>0$, we can assume, pruning $succ_{T'}(\sigma)$ if necessary, that  for all $(i,j)\in succ_{T'}(\sigma)$ $x_{\sigma^\frown (i,j)}=x_\sigma$ and $\rho(\sigma^\frown(i,j))<\rho(\sigma)$. Put $succ_{T'}(\sigma)$ into $A_{stem(T')}$.

    Assume that for some $n$, $F_n$ is defined and for all $\tau\in F_n$, $a_\tau$ and $A_\tau$ are defined. For each $\sigma\in F_n$,  let $(\tau_{n,\sigma})_{n<\omega}$ be an enumeration of $A_\sigma^0$. Note that by construction $x_\sigma=x_{\tau_{n,\sigma}}$. Define for each $m$ and $(i,j)\in succ_{T'}(\tau_{m,\sigma})$ 
    \[
    \Delta(i,j,m,\sigma)= \min\{k:x_{\tau_{m,\sigma}^\frown(i,j)}(k)\not=x_{\sigma}(k)\}.
    \]
    Clearly, this induces a coloring between $\bigcup\{succ_{T'}(\tau_{m,\sigma}):m<\omega \}$ and $\omega$. For simplicity call it $\Delta_{m,\sigma}$.

    Given $X\in\id^+$, we use $supp(X)$ to denote the set $\{i<\omega: \exists^{\infty}j<\omega ((i,j)\in X)\}$ which is infinite. Without loss of generality, we can assume that for each $m<\omega$ one of this condition holds:

    \begin{enumerate}
        \item $\forall (i,j),(\ell,k)\in succ_{T'}(\tau_{m,\sigma})\,(\Delta_{m,\sigma}(i,j)\not =\Delta_{m,\sigma}(\ell,k))$. This means that, $\restr{\Delta_{m,\sigma}}{succ_{T'}(\tau_{m,\sigma})}$ is one to one. 
        \item $\forall i,k\in supp(succ_{T'}(\tau_{m,\sigma}))\forall j,\ell<\omega \, ((k,\ell),(i,j)\in succ_{T'}(\tau_{m,\sigma}))$
        \begin{enumerate}
            \item $i=k \Rightarrow
            \Delta_{m,\sigma}(i,j) =\Delta_{m,\sigma}(k,\ell)$ or
            \item $i\not=k \Rightarrow \Delta_{m,\sigma}(i,j)\not=\Delta_{m,\sigma}(k,\ell)$.
        \end{enumerate}
        This is $\restr{\Delta_{m,\sigma}}{succ_{T'}(\tau_{m,\sigma})}$ colors each column the same color and two elements from different columns have different colors.
    \end{enumerate}
    %¿¿¿Pongo el por que???

    For each $m<\omega$, we say that $\tau_{m,\sigma}$ has property 1 (P1) if the first condition above holds for $\tau_{m,\sigma}$ and it has property 2 (P2) if the second condition above holds for $\tau_{m,\sigma}$. Also fix $\{r^m_k:k<\omega\}$ an increasing enumeration of $supp(succ_{T'}(\tau_{m,\sigma}))$.

    By recursion over $m$ we built the following:
    \begin{enumerate}
        \item $\{B(i,j,\sigma)\in \id:i\leq m\wedge j=m-i\}\subseteq succ_{T'}(\tau_{i,\sigma})$ and
        \item $\mathcal{Z}_{m,\sigma}=\{Z_\sigma(i,j)\in [2^{<\omega}]^{<\omega} : i\leq m \wedge j=m-i\}$
    \end{enumerate}
    such that for $i<\omega$ and $j\not=j'$, $B(i,j,\sigma)\cap B(i,j',\sigma)=\emptyset$. Also, if $\tau_{i,\sigma}$ has P1, then $B(i,j,\sigma)$ is finite, and if $\tau_{i,\sigma}$ has P2, then $B(i,j,\sigma)$ is contained in a set of the form $\{k\}\times \omega$ . The sets $Z_\sigma(i,j)\in \mathcal{Z}_{m,\sigma}$ will be used to have control over the number of colors used at step $m$.

    For $m=0$. We take two cases:
    Case $\tau_{0,\sigma}$ has P1. Choose $j$ such that $(r_0^0,j)\in succ_{T'}(\tau_{0,\sigma})$ and let $B(0,0,\sigma)=\{(r_0^0,j)\}\in \id$ and $Z_\sigma(0,0)=\{\Delta_{0,\sigma}(r_0^0,j)\}$.
    Case $\tau_{0,\sigma}$ has P2. Let $B(0,0,\sigma)=\{(i,j)\in succ_{T'}(\tau_{0,\sigma}): i=r_0^0\}\in \id$. By P2 if we take $Z_\sigma(0,0)=\{\Delta_{0,\sigma}(i,j):(i,j)\in B(0,0,\sigma)\}$, it has size 1.

    Assume that for all $r\leq m$ the following sets are defined:
    \begin{enumerate}
        \item $\{B(i,j,\sigma)\subseteq succ_{T'}(\tau_{i,\sigma}): i\leq r\wedge j=r-i\}$ and
        \item $\mathcal{Z}_{r,\sigma}=\{Z_\sigma(i,j)\in [2^{<\omega}]^{<\omega} : i\leq r\wedge j=r-i\}$.
    \end{enumerate}
    Now we build for $m+1$: By recursion on $i\leq m+1$.
    Assume that we already built $\{B(s,t,\sigma),Z_\sigma(s,t): s<i \wedge t=m+1-s\}$.

    By hypothesis $\bigcup\{B(i,j,\sigma):j\leq m-i\}\in\id$. Let $W_\sigma(i,m+1)$ be the union of  $\bigcup\{B(i,j,\sigma):j\leq m-i\}$ and
    \begin{displaymath}
             \{(\ell,k): \Delta_{i,\sigma}(\ell,k)\in \bigcup\{\bigcup \mathcal{Z}_{r,\sigma}:r\leq n\}\cup \bigcup\{Z_\sigma(s,t):s<i\wedge t=m+1-s\} \}.
    \end{displaymath}
    By construction, for all $r\in\bigcup \mathcal{Z}_{r,\sigma}$ is finite and also $Z_\sigma(s,t)$ for $s<i$ and $t=m+1-s$ is finite. Then $W_\sigma(i,m+1)\in \id$.
    
    To define $B(i,m+1-i,\sigma)$ and $ Z_\sigma(i,m+1-i)$ we consider two cases:

    Case 1: $\tau_{i,\sigma}$ has P1. Since $\bigcup\{B(i,j,\sigma):j\leq m-i\}$ is finite, $W_\sigma(i,m-i)$ is finite. Then $succ_{T'}(\tau_{i,\sigma})\setminus W_\sigma(i,m+1)\in\id^+ $. So, we can find $j_0,...,j_{m+1}$ such that $\{(r_s^i,j_s): s\leq m+1\}\subseteq succ_{T'}(\tau_{i,\sigma})\setminus W_\sigma(i,m+1)$. 
    
    Let $B(i,m+1-i,\sigma)=\{(r^i_k,j_k):k\leq m+1\}$ (a finite set) and 
    \[ Z_\sigma(i,m+1-i)=\{ \Delta_{i,\sigma}(\ell,k): (\ell,k)\in B(i,m+1-i,\sigma)\}.
    \]
    First, notice that since $\tau_{i,\sigma}$ has P1, $Z_\sigma(i,m+1-i)$ is finite. Next, notice that for all $j$ such that $i+j\leq m$, we have $B(i,j,\sigma)\cap B(i,m+1-i,\sigma)=\emptyset$ by construction.

    Case 2: $\tau_{i,\sigma}$ has P2. Take $s$ the minimum such that 
    \begin{displaymath}
        (\{r_s^i\}\times \omega)\cap(succ_{T'}(\tau_{i,\sigma})\setminus W_\sigma(i,m+1))\not =\emptyset.
    \end{displaymath}
    
    Let $B(i,m+1-i,\sigma)=(\{r^i_s\}\times \omega)\cap succ_{T'}(\tau_{i,\sigma})$ and
    
    \[Z_\sigma(i,m+1-i)=\{\Delta_{i,\sigma}(\ell,k): (\ell,k)\in B(i,m+1-i,\sigma)\}.\]
    Since $\tau_{i,\sigma}$ has P2, $|Z_\sigma(i,m+1-i)|=1$.

    With the last recursion finished, for each $B_m(\sigma)=\bigcup\{B(m,j,\sigma):j<\omega\}\subseteq succ_{T'}(\tau_{m,\sigma})$. To prove that $B_m(\sigma)\in \id^+$, we consider two cases: If $\tau_{m,\sigma}$ has P1, $|B(m,j,\sigma)\cap (\{r^m_k\}\times \omega)|$ is finite for each $k<m+j$. Using the fact that $B(m,j,\sigma)\cap B(m,j',\sigma)=\emptyset$ when $j\not=j'$, we are done. If $\tau_{m,\sigma}$ has P2 $B(m,j,\sigma)$ adds an infinite column to $B_m(\sigma)$.

    %Remember that $\sigma\in F_n$ was fixed. 
    We define $F_{n+1}$ as
    \begin{displaymath}
        \{\tau\in T':\exists \sigma\in F_n \exists m<\omega \exists(i,j)\in B_m(\sigma)( \tau=\tau_{m,\sigma}^{\frown}(i,j))\}.
    \end{displaymath}

    Let $\tau\in F_{n+1}$. If $\sigma\in F_n$, $m<\omega$ and $(i,j)\in B_m(\sigma)$ are such that $\tau=\tau_{m,\sigma}^{\frown}(i,j)$, we define $a_\tau=\restr{x_\tau}{\Delta_{m,\sigma}(i,j)}$.

    Now we show that our construction satisfies points (1) to (4).
    (1) is an immediate consequence of our construction.
   (2) follows from the fact that if $\sigma\in F_n$ and $\tau \in F_{n+1}$ with $\sigma\subseteq \tau$, there is $i,j,m<\omega$ such that $\tau=\tau_{m,\sigma}^\frown(i,j)$ and the definition of $\Delta(i,j,m,\sigma)$.
    In order to see (3), by Lemma \ref{convergencia fusión}, for each $n$ and each $\tau\in F_n$ we can prune $succ_{S}(\tau)$ dropping finite many columns to get $[S_\tau]\subseteq f^{-1}(\langle\restr{x_\tau}{|a_\tau|}\rangle)$.
    To see (4) we proceed by induction. For $n=0$, to avoid repetition, this case will be omitted because we use an argument as in case 2 in the inductive step. Let $n>0$  and assume that for all $m\leq n$ the statement is true. To verify for $n+1$ let $\sigma,\sigma'\in F_{n+1}$,  $\tau\in A_\sigma^0$ and $\tau'\in A_{\sigma'}^0$ with $\tau\neq \tau'$. Take $k$ as the maximum number that satisfies that there is $ \mu\in F_k$ such that $\mu\subseteq \tau,\tau'$. Consider two cases. Case 1: $k<n$.  Take $\nu,\nu'\in F_{k+1}$ which extends to $\tau$ and $\tau'$, respectively. By the maximality of $k$, we have $\nu\neq \nu'$. By inductive hypothesis, $\langle a_{\restr{\tau}{|\nu|+1}} \rangle\cap \langle a_{\restr{\tau'}{|\nu'|+1}} \rangle=\emptyset$. Case $k=n$. Then $\sigma=\sigma'$ and  there must exist $m,m'<\omega$ (different) such that $\tau=\tau_{m,\sigma}$ and $\tau'= \tau_{m',\sigma}$. Without loss of generality, suppose $m<m'$, then by construction, for all $(i',j')\in B_{m'}(\sigma)$, $\Delta_{m',\sigma}(i',j')\notin \bigcup\{\bigcup \mathcal{Z}_{r,\sigma}:r< m'\}$. In particular for $r=m$. Then for all $(i,j)\in B_m(\sigma)$ we have $\Delta_{m',\sigma}(i',j')\neq \Delta_{m,\sigma}(i,j)$. This implies  $\langle a_{\tau^\frown(i,j)} \rangle\cap \langle a_{\tau'^\frown(i',j')} \rangle=\emptyset$ by definition. 
    % Let $S=\{\tau\in T':\exists n<\omega(\tau\in F_n \vee \exists \sigma\in F_n (\tau\in A_\sigma) ) \}$. By Lemma \ref{convergencia fusión}, for each $n$ and each $\tau\in F_n$ we can prune $succ_{S}(\tau)$ dropping finite many columns to get that $[S_\tau]\subseteq f^{-1}(\langle\restr{x_\tau}{|a_\tau|}\rangle)$.
\end{proof}

In \cite{poke}, Question 2.15 also asks for $\lav(\mathcal{Z}^+)$. However, we have no answer for this yet.

\section{Some cardinal invariants related to ideals.}

In this section we study several cardinal invariants related to ideals, answering questions posed in \cite{poke}, where the following definitions were introduced.

\begin{definition}[Cie{\'s}lak and Mart{\'\i}nez-Celis]\label{*-omega}
    Given an ideal $\id$,
    \[
    \add_\omega^*(\id)=\min\{ |\mathcal{A}|: \mathcal{A}\subseteq \id \,\forall (X_n)_{n<\omega}\subseteq \id \,\exists A\in \mathcal{A} \,\forall n<\omega\, (|A\setminus X_n|=\omega)\}
    \]

    \[
    \non_\omega^*(\id)= \min\{|\mathcal{A}|: \mathcal A \subseteq [\omega]^\omega\,\forall(X_n)_{n<\omega}\subseteq \id \,\exists A\in \mathcal{A} \,\forall n<\omega \,(A\cap X_n=^*\emptyset) \}
    \]
\end{definition}

For this section, we use some properties of nowhere dense and meager sets that have a better expression in terms of chopped reals (see, for example, \cite{combinatorial}).

\begin{definition}[Talagrand, \cite{talagrand}]
    We use $IP$ to denote the set of interval partitions of $\omega$, that is, $P\in IP$ if $P$ is a partition of $\omega$ of the form $P=(P_n)_{n<\omega}$, where each $P_n$ is an interval. A chopped real is a pair $(x,P)$ where $x\in 2^\omega$ and $P\in IP$.
\end{definition}

We can order $IP$ as follows. $P\leq^* Q$ if for all but finitely many $n$ there is $k$ such that $P_k\subseteq Q_n$.

\begin{theorem}[Theorem 2.10, \cite{cccc}]
    \(\mathfrak{d}\) is the smallest cardinality of any family of interval partitions dominating all interval partitions. \(\mathfrak{b}\) is the smallest cardinality of any family of interval partitions not all dominated by a single interval partition.
\end{theorem}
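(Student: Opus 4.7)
The plan is to reduce the statement to the standard computation of $\mathfrak{b}$ and $\mathfrak{d}$ in $(\omega^\omega,\leq^*)$ via a correspondence between interval partitions and strictly increasing functions. To each strictly increasing $f\in\omega^\omega$ with $f(0)=0$ I would associate the partition $P^f\in IP$ with blocks $[f(n),f(n+1))$, and to each partition $P\in IP$ with boundary sequence $(a^P_n)_{n<\omega}$ the ``next-boundary'' function $e_P(m)=a^P_{k+1}$ (where $k$ is the unique index with $a^P_k\leq m<a^P_{k+1}$), together with the ``two-ahead'' function $F_P(m)=a^P_{k+2}$.

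For the inequality $\mathfrak{d}_{IP}\leq\mathfrak{d}$, I would start from a dominating family $\{f_\alpha:\alpha<\mathfrak{d}\}$ of strictly increasing functions and build $P_\alpha$ by iteration: $a^{P_\alpha}_0=0$, $a^{P_\alpha}_{n+1}=f_\alpha(a^{P_\alpha}_n)$. Given $Q\in IP$ with boundaries $(b_n)$, as soon as $f_\alpha$ dominates $e_Q\circ e_Q$ pointwise beyond some $M$, a direct induction shows that for all large $n$, if $a^{P_\alpha}_n\in[b_k,b_{k+1})$ then $a^{P_\alpha}_{n+1}\geq e_Q(e_Q(a^{P_\alpha}_n))=b_{k+2}$; hence $Q_{k+1}\subseteq P_\alpha^n$, which gives $Q\leq^*_{IP} P_\alpha$.

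Conversely, for $\mathfrak{d}\leq\mathfrak{d}_{IP}$, I would take a dominating family $\{P_\alpha:\alpha<\kappa\}$ in $(IP,\leq^*)$ and claim that $\{F_{P_\alpha}:\alpha<\kappa\}$ dominates $\omega^\omega$. Given a strictly increasing $g\in\omega^\omega$ with $g(n)>n$, form $Q$ by $b_0=0$, $b_{n+1}=g(b_n)$ and pick $\alpha$ with $Q\leq^*_{IP} P_\alpha$. For almost every $n$, choose $k=k(n)$ with $Q_k\subseteq P_\alpha^n$ and $j=k(n+1)$ with $Q_j\subseteq P_\alpha^{n+1}$; then $j>k$, and for any $m\in P_\alpha^n$, letting $\ell$ be the unique index with $m\in Q_\ell$, the inequality $m<a^{P_\alpha}_{n+1}\leq b_j$ forces $\ell\leq j-1$, whence $g(m)<b_{\ell+2}\leq b_{j+1}\leq a^{P_\alpha}_{n+2}=F_{P_\alpha}(m)$. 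The $\mathfrak{b}$-equality follows by running the same correspondence contrapositively: iteration sends an unbounded family of increasing functions to an $\leq^*_{IP}$-unbounded family of partitions, and the two-ahead map goes the other way.

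The main obstacle is that the naive correspondence $f\leftrightarrow P^f$ does \emph{not} convert $\leq^*$ on functions into $\leq^*_{IP}$ directly: fitting a single $Q$-block $[b_k,b_{k+1})$ inside $[f(n),f(n+1))$ requires two consecutive $f$-values to land inside a single $Q$-interval, which is strictly stronger than $f\geq^* e_Q$ pointwise. The ``iterate-twice / two-ahead'' trick in both directions is precisely what bridges this gap and is the technical heart of the argument.
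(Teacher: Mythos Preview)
The paper does not prove this theorem; it is quoted verbatim from the literature (Theorem~2.10 of the cited reference, Blass's handbook chapter on combinatorial cardinal characteristics) and used as a black box. Your argument is correct and is essentially the standard proof one finds there: the Galois--Tukey style correspondence between $(\omega^\omega,\leq^*)$ and $(IP,\leq^*_{IP})$ via iteration $f\mapsto P_f$ in one direction and the ``two-ahead'' map $P\mapsto F_P$ in the other, together with the key observation that containing a full $Q$-block inside a $P$-block amounts to jumping past \emph{two} consecutive $Q$-boundaries, which is exactly why $e_Q\circ e_Q=F_Q$ (rather than $e_Q$ alone) is the right function to dominate. The contrapositive derivation of the $\mathfrak{b}$ equalities from the two lemmas you establish for $\mathfrak{d}$ is also the standard route. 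Since the paper itself offers no proof to compare against, there is nothing further to contrast.
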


With this, it is possible to verify the next lemma.

\begin{lemma} \label{d-part}
    Let $\mathcal{D}\subseteq IP$ be a family with $|\mathcal{D}|<\mathfrak{d}$. There is $R\in IP$ such that for all $P\in \mathcal{D}$ there are infinitely many $n$ such that $R_n$ contain an interval of $P$. 

\end{lemma}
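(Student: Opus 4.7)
The plan is to translate the containment condition into a statement about domination in $(\omega^\omega,\leq^*)$ and then invoke the hypothesis $|\mathcal{D}|<\mathfrak{d}$ via the theorem cited just above. For each $P\in\mathcal{D}$ with endpoints $(e^P_k)_{k<\omega}$, I define $f_P:\omega\to\omega$ by $f_P(n)=e^P_{\ell+1}$, where $\ell=\min\{k:e^P_k\geq n\}$. The point of this definition is that for any interval partition $R$ with endpoints $(r_n)_{n<\omega}$, the interval $R_n=[r_n,r_{n+1})$ contains some $P_k$ if and only if $r_{n+1}\geq f_P(r_n)$, since this is precisely the statement that $R_n$ contains two consecutive $P$-endpoints. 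So the lemma reduces to producing an increasing sequence $(r_n)$ such that, for every $P\in\mathcal{D}$, the set $\{n:r_{n+1}\geq f_P(r_n)\}$ is infinite.

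Since $|\{f_P:P\in\mathcal{D}\}|\leq|\mathcal{D}|<\mathfrak{d}$ and, by the cited theorem, $\mathfrak{d}$ is the dominating number of $(\omega^\omega,\leq^*)$, this family is not cofinal. Hence I may fix a strictly increasing $g\in\omega^\omega$ with $g(n)>n$ such that, for every $P\in\mathcal{D}$, the set $A_P=\{m:g(m)>f_P(m)\}$ is infinite. With such a $g$ chosen in advance, the goal becomes to arrange the endpoints $(r_n)$ so that (a) whenever $r_n\in A_P$ one has $r_{n+1}\geq g(r_n)$, which gives $r_{n+1}>f_P(r_n)$ and hence $R_n\supseteq$ some $P_k$; and (b) for every $P\in\mathcal{D}$, the set $\{n:r_n\in A_P\}$ is infinite. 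Then (a) and (b) together yield the conclusion.

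The implementation I have in mind is to build $(r_n)$ in alternating ``jump'' and ``catch-up'' stages. At a catch-up stage one increments $r_n$ by $1$, thereby letting the sequence march through $\omega$ and in particular through every element of $\bigcup_{P\in\mathcal{D}}A_P$; at a jump stage, triggered whenever $r_n$ first enters some $A_P$, one sets $r_{n+1}=g(r_n)$, ensuring condition (a) at that step. The main obstacle is (b): one must ensure that each individual $A_P$ is visited by $(r_n)$ infinitely often, rather than being permanently skipped over by a $g$-jump triggered by a different $P'\in\mathcal{D}$. When $|\mathcal{D}|$ is uncountable, a straightforward diagonal enumeration is unavailable, so some bookkeeping is required; the trick is that since $g$ is fixed and each $A_P$ is cofinal in $\omega$, between any two consecutive $g$-jumps there are only finitely many elements of $\omega$ skipped, and the catch-up stages outside that window still traverse every subsequent point of $A_P$, so infinitely many entries into $A_P$ survive. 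This fusion-style argument is where the strength of $\mathfrak{d}$ (as opposed to the easier case $|\mathcal{D}|<\mathfrak{b}$) is fully used.
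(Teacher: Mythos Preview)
Your reduction and the choice of $g$ are correct; the gap is in constructing $(r_n)$ from $g$. The scheme ``jump when $r_n\in\bigcup_P A_P$, otherwise increment'' does not secure your condition (b). If $\mathcal D$ contains the partition $P^\star$ into singletons, then $f_{P^\star}(n)=n+1$, so (taking $g(n)\geq n+2$, as one may) $A_{P^\star}=\omega$ and \emph{every} stage is a jump; the sequence is simply $r_n=g^{(n)}(0)$. For any other $P_0\in\mathcal D$, the hypothesis on $g$ says only that $A_{P_0}=\{m:g(m)>f_{P_0}(m)\}$ is infinite---a statement about all of $\omega$---and gives no link whatsoever between $A_{P_0}$ and the sparse orbit $\{g^{(n)}(0):n<\omega\}$. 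Your ``trick'' asserts that catch-up stages traverse each $A_P$, but here there are no catch-up stages at all, so the assertion is vacuous. More generally, a rule that reads only the set $\bigcup_P A_P$ cannot distinguish the individual $A_P$, so no bookkeeping of the kind you sketch can work uniformly for uncountable $\mathcal D$.

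The paper leaves the verification to the reader, but the intended argument stays inside $(IP,\leq^*)$ and needs one twist beyond ``$\mathcal D$ is not dominating''. For each $P\in\mathcal D$ let $P'$ be the coarsening with $P'_n=P_{2n}\cup P_{2n+1}$. Since $|\{P':P\in\mathcal D\}|<\mathfrak d$, the cited theorem yields $R$ with $R\not\leq^*P'$ for every $P$. Fix $P$ and one of the infinitely many $n$ such that $P'_n=[p_{2n},p_{2n+2})$ contains no interval of $R$, and let $j$ satisfy $p_{2n+1}\in R_j=[r_j,r_{j+1})$. If $r_j\geq p_{2n}$ then $r_{j+1}>p_{2n+2}$ (else $R_j\subseteq P'_n$), whence $R_j\supseteq P_{2n+1}$; if $r_j<p_{2n}$ then already $R_j\supseteq P_{2n}$. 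As $n\to\infty$ so does $j$, giving infinitely many intervals of $R$ that contain an interval of $P$.
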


%An other useful result about chopped reals is the next theorem. For this, given $(x,P)$ a chopped
%\begin{theorem}
%    If for each 
%\end{theorem}

Given $(x,P)$ a chopped real, let $N(x,P)=\{y\in2^\omega: \forall n \,(y\upharpoonright P_n\neq x\upharpoonright P_n)\}$. It is a nowhere dense set and countable unions of this type of sets form a base for the meager ideal in $2^{\omega}$. In fact, in \cite{combinatorial} is defined 
\[
Match(X,P)=\{y\in2^\omega: \exists^\infty n \,(y\upharpoonright P_n= x\upharpoonright P_n)\}
\]
and it is proved that:

\begin{lemma}\label{chopped}
    $M\subseteq 2^\omega$ is meager if and only if there is a chopped real $(x,P)$ such that $M\cap Match(X,P)=\emptyset$.
\end{lemma}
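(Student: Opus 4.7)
The plan is to prove the two directions separately, concentrating the work in the forward implication. For the easy direction $(\Leftarrow)$, I will note that $M \cap Match(x,P) = \emptyset$ forces $M$ into the complement $N(x,P) = \{y : \forall^\infty n\ (y \upharpoonright P_n \neq x \upharpoonright P_n)\}$, and this complement is $F_\sigma$ meager: writing $N(x,P) = \bigcup_N A_N$ with $A_N = \{y : \forall n \geq N\ (y \upharpoonright P_n \neq x \upharpoonright P_n)\}$, each $A_N$ is closed and nowhere dense, since any $s \in 2^{<\omega}$ admits an extension to length $a_{n+1}$ (for some $n \geq N$ with $a_n \geq |s|$) that agrees with $x$ on $P_n$, producing a basic open neighbourhood disjoint from $A_N$.

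The forward implication is the substantive one. I will fix an $F_\sigma$-cover $M \subseteq \bigcup_n F_n$ with each $F_n$ closed nowhere dense and $F_n \subseteq F_{n+1}$, and construct $(x,P)$ by induction on the interval endpoints $a_0 < a_1 < \ldots$. The main input will be the following Key Lemma: for every closed nowhere dense $F \subseteq 2^\omega$ and every $m \geq 0$, there exist $b > m$ and $u \in 2^{[m,b)}$ with $\{y : y \upharpoonright [m,b) = u\} \cap F = \emptyset$, i.e.\ $u$ lies outside the finite projection $\pi_{[m,b)}(F)$. Granted the lemma, at stage $k$ I will apply it to $F_k$ and $m = a_k$, obtaining $a_{k+1}$ and a string $u_k$; then set $P_k = [a_k, a_{k+1})$ and $x \upharpoonright P_k = u_k$. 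The verification is immediate: for $y \in F_n$ and any $k \geq n$ we have $y \in F_k$, hence $y \upharpoonright P_k \neq u_k = x \upharpoonright P_k$; so $y$ matches $x$ on at most the first $n$ many intervals and therefore $y \notin Match(x,P)$.

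The Key Lemma is the main obstacle. The naive attempt to use nowhere denseness to extend a prefix $s$ to $t$ with $\langle t \rangle \cap F = \emptyset$ only yields $u = t \upharpoonright [m,|t|)$ that depends on $s$, and different prefixes may require incompatible $u$'s. I will resolve this by arguing by contradiction on the tail projection: assuming $\pi_{[m,b)}(F) = 2^{[m,b)}$ for every $b > m$, the closed set $\tilde F := \pi_{[m,\infty)}(F) \subseteq 2^{[m,\infty)}$ has dense finite projections and is therefore equal to the whole space. Writing $\tilde F = \bigcup_{s \in 2^m} \pi_{[m,\infty)}(F \cap \langle s \rangle)$ as a finite union of closed sets covering the Polish space $2^{[m,\infty)}$, some term contains a basic open neighbourhood $[v]$; unwinding the projection yields $s \in 2^m$ and $v \in 2^{[m,b)}$ with $\langle s^{\frown} v \rangle \subseteq F$, contradicting the nowhere denseness of $F$. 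This Baire-type step is the delicate piece of the argument; everything else is bookkeeping.
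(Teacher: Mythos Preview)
Your proof is correct. Note, however, that the paper does not actually prove this lemma: it is quoted from \cite{combinatorial} (Blass's handbook chapter), so there is no ``paper's own proof'' to compare against.

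A few remarks. First, a minor notational clash: in the paper $N(x,P)$ denotes $\{y:\forall n\,(y\upharpoonright P_n\neq x\upharpoonright P_n)\}$, not the $\forall^\infty$-version you use; your argument is unaffected, since the complement of $Match(x,P)$ is indeed $\bigcup_N A_N$ with your $A_N$'s closed nowhere dense. Second, your Baire-category proof of the Key Lemma is valid, but there is a more elementary direct argument that avoids the projection trick: let $T=\{s\in 2^{<\omega}:\langle s\rangle\cap F\neq\emptyset\}$ be the (downward-closed) tree of $F$, enumerate $2^m=\{s_0,\dots,s_{2^m-1}\}$, and build $u_{-1}\subseteq u_0\subseteq\cdots\subseteq u_{2^m-1}$ in $2^{[m,\cdot)}$ by successively extending so that $s_i{}^\frown u_i\notin T$ (possible since $F$ is nowhere dense); downward closure of $T$ guarantees that once $s_i{}^\frown u_i\notin T$, the same holds for every longer $u_j$. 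The final $u=u_{2^m-1}$ witnesses the Key Lemma. This is essentially how the result is proved in the standard references; your route via the tail projection and Baire category is a legitimate and slightly more conceptual alternative.
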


In \cite{poke} the authors proved in Theorem 4.9 that $\add^*_\omega(\nwd)\leq \add(\mathcal{M})$ and ask in Question 4.10 if these cardinals are equal.

\begin{proposition}
    $\add(\mathcal{M})\leq \add^*_\omega(\nwd)$.
\end{proposition}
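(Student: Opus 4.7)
The plan is to show that when $\mathcal{A} \subseteq \nwd$ has $|\mathcal{A}| < \add(\mathcal{M})$, I can construct an explicit countable family $(X_n)_{n<\omega} \subseteq \nwd$ such that every $A \in \mathcal{A}$ is actually contained in some $X_n$. I will work inside the copy of $\nwd$ on $2^{<\omega}$, which is Kat\v{e}tov-equivalent to the one on $\mathbb{Q}$; in that copy each $A$ has a branch closure $\bar{A} = [\downarrow A] \subseteq 2^\omega$ which is closed nowhere dense.

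The first ingredient is additivity of $\mathcal{M}$: since $|\mathcal{A}| < \add(\mathcal{M})$, the union $\bigcup_{A \in \mathcal{A}} \bar{A}$ is meager, so its complement $G \subseteq 2^\omega$ is comeager, and by hereditary separability I fix a countable $\{y_m : m<\omega\} \subseteq G$ that is automatically dense in $2^\omega$. For each $A$ and each $m$, the condition $y_m \notin \bar{A}$ produces a least integer $k_{A,m}$ such that no $a \in A$ extends $y_m \upharpoonright k_{A,m}$.

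The second ingredient is bounding: via the standard inequality $\add(\mathcal{M}) \leq \mathfrak{b}$, I find a single increasing $\sigma^* \in \omega^\omega$ such that for every $A$ the set $F_A = \{m : k_{A,m} > \sigma^*(m)\}$ is finite, and I set $N_A = \max(\{k_{A,m} - \sigma^*(m) : m \in F_A\} \cup \{0\})$. For each $n < \omega$ I then define
\[
X_n = \{\mu \in 2^{<\omega} : \forall m,\ y_m \upharpoonright (\sigma^*(m)+n) \not\subseteq \mu\}.
\]
Each $X_n$ is visibly downward closed, and its branches form the closed set $2^\omega \setminus \bigcup_m [y_m \upharpoonright (\sigma^*(m)+n)]$, which is nowhere dense because $\{y_m\}$ is dense in $2^\omega$ and $\sigma^*$ is increasing, so $X_n \in \nwd$. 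Finally $A \subseteq X_{N_A}$ for every $A$: if some $a \in A$ extended $y_m \upharpoonright (\sigma^*(m)+N_A)$, then $|a| \geq \sigma^*(m)+N_A \geq k_{A,m}$ and $a$ would also extend the shorter prefix $y_m \upharpoonright k_{A,m}$, contradicting the choice of $k_{A,m}$; shorter elements $a$ satisfy the non-extension trivially.

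The step I expect to require the most care is verifying that each $X_n$ is nowhere dense. The scale $\sigma^*(m)+n$ in the index $m$ must grow fast enough to dominate every escape depth $k_{A,m}$ simultaneously (so that the capture in Step 5 works for \emph{all} $m$, not only cofinitely many), yet the basic clopens $[y_m \upharpoonright (\sigma^*(m)+n)]$ must still pack into a dense open subset of $2^\omega$ (so that $X_n$ really lies in $\nwd$). Monotonicity of $\sigma^*$ together with density of $\{y_m\}$ is what reconciles these two requirements, and the uniform shift parameter $n$ is precisely the device that turns the cofinite domination supplied by $\mathfrak{b}$ into a genuine inclusion $A \subseteq X_n$ for a single index $n$.
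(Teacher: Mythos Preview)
Your proof is correct. Both your argument and the paper's rest on the same two ingredients --- additivity of $\mathcal{M}$ to produce an object avoiding all $<\add(\mathcal{M})$ many nowhere dense sets, followed by a $\mathfrak{b}$-type domination to uniformize the resulting data into a single countable family --- but the implementations diverge. The paper works through the chopped-real machinery: it represents each nowhere dense set as $N(x^\alpha,P^\alpha)$, finds a single real $c$ matching each $x^\alpha$ infinitely often along its partition, then dominates auxiliary interval partitions $R^\alpha$ by one partition $P$, and takes $\{N(c,Q): Q=^*P\}$ as the witnessing countable family. You instead stay entirely inside the tree picture: you pass to the branch closures $[\downarrow A]\subseteq 2^\omega$, pick a countable dense set $\{y_m\}$ in the common complement, bound the escape-depth functions $m\mapsto k_{A,m}$ by a single $\sigma^*$, and let the witnessing family be the explicit trees $X_n$ of nodes missing every prefix $y_m\upharpoonright(\sigma^*(m)+n)$. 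Your route is somewhat more elementary (no chopped reals or interval-partition calculus) and yields the slightly stronger conclusion $A\subseteq X_{N_A}$ rather than merely $|A\setminus X_{N_A}|<\omega$; the paper's approach has the advantage of plugging directly into the standard Talagrand--Bartoszy\'nski toolkit for $\mathcal{M}$. One minor comment: monotonicity of $\sigma^*$ is not actually needed for the density argument showing $X_n\in\nwd$, since any $m$ with $\tau\subseteq y_m$ already gives compatibility of $\tau$ with $y_m\upharpoonright(\sigma^*(m)+n)$ regardless of the size of $\sigma^*(m)$.
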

\begin{proof}
    Let $\kappa<\add(\mathcal{M})$ be an infinite cardinal and 
    \[\{N(x^\alpha,P^\alpha): \alpha<\kappa \wedge P^\alpha\in IP \wedge x^\alpha\in 2^\omega\}\subseteq \nwd.\]
    Note that, for each $\alpha<\kappa$, the set 
    \[
    \{ y\in 2^\omega : \forall^\infty n \,(y\!\upharpoonright P_n^\alpha \neq x^\alpha \!\upharpoonright P_n^\alpha)\}
    \]
    can be expressed as a countable union of sets of the form $N(x^\alpha, Q^\alpha)$, where $Q^\alpha$ is obtained by modifying finitely many elements of $P^\alpha$. Take $c\in 2^\omega\setminus \bigcup_{\alpha<\kappa}N(x^\alpha, P^\alpha)$. 

    Since $\kappa$ is infinite, we can assume that, for all $\alpha<\kappa$, there are infinitely many $n$ such that $c\!\upharpoonright P^\alpha_n = x^\alpha \!\upharpoonright P^\alpha_n$. For each $\alpha<\kappa$, we can find $R^\alpha\in IP$ such that, for all $n$, there is $m$ with $P^\alpha_m \subseteq R^\alpha_n$ and $c\!\upharpoonright P^\alpha_m = x^\alpha\!\upharpoonright P^\alpha_m$. 

    Since $\{R^\alpha:\alpha<\kappa\}$ has size $<\mathfrak{b}$, there is $P\in IP$ such that, for all $\alpha <\kappa$, $R^\alpha \leq^* P$.

    Let $\mathcal{P}=\{Q\in IP : Q =^* P\}$ (i.e., there is $n$ such that, for all $k>n$, $P_k = Q_k$), which is a countable set. We will now show that $\{N(c,Q) : Q\in\mathcal{P}\}$ witnesses that $\kappa < \add^*_\omega(\nwd)$.

    Fix $\alpha<\kappa$ and $y\in N(x^\alpha, P^\alpha)$. Thus, for all $n$, $y\!\upharpoonright P_n^\alpha \neq x^\alpha\!\upharpoonright P_n^\alpha$. By the definition of $P$, for all $n$, $y\!\upharpoonright R_n^\alpha \neq x^\alpha\!\upharpoonright R_n^\alpha$. Since $c\notin N(x^\alpha, P^\alpha)$, there are infinitely many $n$ such that $c\!\upharpoonright P_n^\alpha = x^\alpha\!\upharpoonright P_n^\alpha$.  

    Choose $Q\in IP$ such that, for all $n$, there exists $k$ with  $R_k^\alpha \subseteq Q_n$. Then, for each $n$, there is $m$ such that $P_m^\alpha \subseteq Q_n$ and $c\!\upharpoonright P_m^\alpha = x^\alpha\!\upharpoonright P_m^\alpha$. Therefore, for each $n$, there exists $m\in Q_n$ such that  $y(m)\neq x^\alpha(m)=c(m)$. Thus, for each $n$,
    \[
    y\!\upharpoonright Q_n \neq c\!\upharpoonright Q_n.
    \]

\end{proof}

\begin{coro}
     $\add(\mathcal{M}) = \add^*_\omega(\nwd)$.
\end{coro}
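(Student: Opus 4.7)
The plan is essentially an immediate combination of two inequalities already available at this point in the paper. The inequality $\add^*_\omega(\nwd) \leq \add(\mathcal{M})$ is precisely Theorem 4.9 of \cite{poke}, which is cited in the paragraph just before the preceding proposition. The reverse inequality $\add(\mathcal{M}) \leq \add^*_\omega(\nwd)$ is the content of the proposition immediately above. Putting these two bounds together yields the claimed equality, so there is essentially nothing further to verify.

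In writing it up, I would simply state that the upper bound is given by \cite[Theorem 4.9]{poke} and that the lower bound is the preceding proposition, and conclude. Since no new argument is required, there is no real obstacle: all of the genuine combinatorial work (the chopped-real reformulation via Lemma \ref{chopped}, the use of Lemma \ref{d-part} to find the interval partition $P$ dominating the $R^\alpha$, and the countable family $\mathcal{P} = \{Q \in IP : Q =^* P\}$ witnessing $\kappa < \add^*_\omega(\nwd)$) has already been carried out in the proposition. The corollary is therefore purely a bookkeeping step that records the equality for future reference and as the answer to Question 4.10 of \cite{poke}.
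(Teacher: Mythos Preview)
Your proposal is correct and matches the paper's approach exactly: the corollary is stated without proof, being the immediate combination of the cited inequality $\add^*_\omega(\nwd)\leq\add(\mathcal{M})$ from \cite[Theorem~4.9]{poke} with the preceding proposition giving $\add(\mathcal{M})\leq\add^*_\omega(\nwd)$.
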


Another question that appears in \cite{poke} is Question 4.11, which asks for the value of $ \non^*_\omega(\ed)$.

For the next result, we need to remember a characterization of $\cov(\mathcal{M})$ due to Bartoszy\'{n}ski (Lemma 2.4.2, \cite{barto}).

\begin{theorem}\label{cov m}
    Let $\kappa$ be an infinite cardinal. The following statements are equivalent:
    \begin{enumerate}
        \item $\kappa<\cov(\mathcal{M})$
        \item $\forall F \in [\omega^\omega]^{<\kappa} \, \forall G \in [[\omega]^\omega]^{<\kappa}    \,\exists g \in \omega^\omega\,\forall f \in F\,\forall X \in G\,\exists^\infty n \in X\\(f(n) = g(n))$.
    \end{enumerate}
\end{theorem}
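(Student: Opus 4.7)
For $(1) \Rightarrow (2)$, the strategy is the standard meager-set argument. For each pair $(f, X) \in F \times G$, consider
\[
M_{f,X} := \{g \in \omega^{\omega} : |\{n \in X : g(n) = f(n)\}| < \omega\}.
\]
Writing $M_{f,X} = \bigcup_{k < \omega} C_{f,X,k}$ with $C_{f,X,k} := \{g \in \omega^{\omega} : g(n) \neq f(n) \text{ for every } n \in X \cap [k, \omega)\}$, each $C_{f,X,k}$ is closed and nowhere dense: any $s \in \omega^{<\omega}$ extends to some $s'$ with $s'(n) = f(n)$ for $n \in X$, $n \geq \max(|s|, k)$, so the basic open set $[s']$ misses $C_{f,X,k}$. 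Hence $M_{f,X}$ is a meager $F_\sigma$ set. Infinite cardinal arithmetic gives $|F \times G| < \kappa \leq \cov(\mathcal{M})$, so this family has fewer than $\cov(\mathcal{M})$ elements and cannot cover $\omega^{\omega}$; any $g$ outside the union is the required witness.

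For $(2) \Rightarrow (1)$, the plan is to show that fewer than $\kappa$ meager subsets of $2^{\omega}$ cannot cover. Given $\{M_\alpha\}_{\alpha < \mu}$ with $\mu < \kappa$, Lemma \ref{chopped} yields chopped reals $(x_\alpha, P^\alpha)$ with $M_\alpha \subseteq 2^{\omega} \setminus \text{Match}(x_\alpha, P^\alpha)$. First apply (2) with $G = \{\omega\}$: this reproduces Bartoszyński's classical eventually-different characterization of $\cov(\mathcal{M})$ and yields $\kappa \leq \cov(\mathcal{M}) \leq \mathfrak{d}$ (Cichoń). Then Lemma \ref{d-part} produces a single $R \in IP$ for which $X_\alpha := \{n : \exists k\,(P^\alpha_k \subseteq R_n)\}$ is infinite for every $\alpha$. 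Fix an injection $c : 2^{<\omega} \hookrightarrow \omega$ and define $f_\alpha(n) := c(x_\alpha \upharpoonright R_n)$. Apply (2) to $F := \{f_\alpha\}_{\alpha < \mu}$ and $G := \{X_\alpha\}_{\alpha < \mu}$ (both of size $< \kappa$) to obtain $g \in \omega^{\omega}$ such that $\{n \in X_\alpha : g(n) = f_\alpha(n)\}$ is infinite for every $\alpha$. Defining $y \in 2^{\omega}$ block-wise by $y \upharpoonright R_n := c^{-1}(g(n))$ whenever this is defined (and arbitrarily otherwise), for every $\alpha$ and each matching $n \in X_\alpha$ one has $y \upharpoonright R_n = x_\alpha \upharpoonright R_n$, hence $y \upharpoonright P^\alpha_k = x_\alpha \upharpoonright P^\alpha_k$ for the witness $k$ with $P^\alpha_k \subseteq R_n$. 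Therefore $y \in \bigcap_\alpha \text{Match}(x_\alpha, P^\alpha) \subseteq 2^{\omega} \setminus \bigcup_\alpha M_\alpha$.

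The main obstacle is the bridge from the pointwise matching statement in (2) to the block-matching required by the chopped-real description of meager sets. This is resolved by the common-partition trick from Lemma \ref{d-part}, which compresses the $\mu$ distinct partitions into a single $R$ so that a single coordinate-wise match produced by (2) simultaneously forces block-matching along every $P^\alpha$. A minor point is reconciling the $<\kappa$ quantifier in (2) with the strict inequality $\kappa < \cov(\mathcal{M})$ in (1), which is standard.
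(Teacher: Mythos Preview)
The paper does not prove this theorem; it is quoted as Bartoszy\'nski's Lemma~2.4.2 and used only in the direction $(1)\Rightarrow(2)$, so there is no paper proof to compare against. Your argument for $(1)\Rightarrow(2)$ is correct and is the standard one.

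Your $(2)\Rightarrow(1)$ argument, however, has a genuine gap precisely at the point you dismiss as a ``standard'' minor reconciliation. You take $\mu<\kappa$ meager sets and show they do not cover $2^\omega$; this yields only $\cov(\mathcal{M})\geq\kappa$, not $\cov(\mathcal{M})>\kappa$. The gap cannot be closed, because the equivalence as literally written is false: set $\kappa=\cov(\mathcal{M})$. Then any $F,G$ appearing in (2) satisfy $|F\times G|<\cov(\mathcal{M})$, so by your own meager-set computation the corresponding sets $M_{f,X}$ do not cover and (2) holds, while (1) is plainly false. The intended statement should have $[\omega^\omega]^{\leq\kappa}$ and $[[\omega]^\omega]^{\leq\kappa}$ in (2); with that correction your chopped-real argument (now with $\mu=\kappa$, and $\kappa<\mathfrak{d}$ obtained as before) goes through and gives the strict inequality.

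A smaller structural point: once you invoke the classical eventually-different characterization to deduce $\kappa\leq\cov(\mathcal{M})$, you already have the full conclusion you are after (and under the corrected statement with $\leq\kappa$, you would already have $\kappa<\cov(\mathcal{M})$). The subsequent chopped-real construction via Lemma~\ref{d-part} then re-proves the same inequality, so that part of the argument is redundant rather than an independent step. If you want a self-contained proof that actually exercises the $G$-parameter, you should drop the appeal to the classical characterization and instead derive $\kappa\leq\mathfrak{d}$ (or $\kappa<\mathfrak{d}$ under the corrected hypothesis) directly from (2).
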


It is easy to note the next lemma from the Definition \ref{*-omega}.

\begin{lemma}
    $\non^*_\omega(\ed)\leq \mathfrak{d}$.
\end{lemma}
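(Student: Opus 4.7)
The plan is to produce an explicit witness family of size $\mathfrak{d}$. Fix a dominating family $\{f_\alpha : \alpha<\mathfrak{d}\} \subseteq \omega^\omega$ consisting of strictly increasing functions, and for each $\alpha<\mathfrak{d}$ set
\[
A_\alpha=\{(n,f_\alpha(n)): n<\omega\},
\]
which is an infinite subset of $\omega\times\omega$. I will show that $\mathcal{A}=\{A_\alpha:\alpha<\mathfrak{d}\}$ witnesses $\non^*_\omega(\ed)\leq \mathfrak{d}$.

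Given a countable family $(X_k)_{k<\omega}\subseteq \ed$, let $N_k$ be a witness to $X_k\in\ed$, so $|(X_k)_n|\leq N_k$ for all $n>N_k$, where $(X_k)_n=\{m:(n,m)\in X_k\}$. For each $k$, define $g_k\in\omega^\omega$ by
\[
g_k(n)=\begin{cases} 1+\max (X_k)_n & \text{if } n>N_k,\\ 0 & \text{otherwise,}\end{cases}
\]
which is well-defined because $(X_k)_n$ is finite whenever $n>N_k$. Diagonalize to obtain a single function $h\in\omega^\omega$ with $h\geq^* g_k$ for every $k$: for example, put $h(n)=1+\max\{g_k(n):k\leq n\}$, so that $h(n)\geq g_k(n)$ whenever $n\geq k$.

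Since $\{f_\alpha\}$ is a dominating family, pick $\alpha<\mathfrak{d}$ with $f_\alpha\geq^* h$. Then for each $k$ and all sufficiently large $n$, $f_\alpha(n)\geq h(n)\geq g_k(n)>\max(X_k)_n$, so $(n,f_\alpha(n))\notin X_k$. Hence $A_\alpha\cap X_k=^*\emptyset$ for every $k$, which is exactly the property required for $\non^*_\omega(\ed)$. Since $|\mathcal{A}|\leq \mathfrak{d}$, this completes the proof.

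The argument is essentially bookkeeping; the only point that needs a little care is that $g_k$ is not defined in a uniform way on the initial segment $n\leq N_k$ (the columns of $X_k$ there may be infinite), but since we only need the condition $A_\alpha \cap X_k =^*\emptyset$ up to a finite error, the diagonal function $h$ just needs to dominate each $g_k$ eventually, which is automatic. No deeper obstacle seems to arise.
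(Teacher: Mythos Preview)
Your proof is correct. The paper does not actually prove this lemma; it merely states that it ``is easy to note from the definition,'' and your argument is precisely the natural way to fill in the details: graphs of a dominating family furnish the required witness, since any countable collection of $\ed$-sets can be diagonalized into a single function that some member of the dominating family eventually exceeds. (The strict increasingness of the $f_\alpha$ is harmless but unnecessary, and you should perhaps note the convention $\max\emptyset=-1$ when $(X_k)_n=\emptyset$, though this case is trivially handled anyway.)
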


With this, we are ready to calculate the value of $ \non^*_\omega(\ed)$.

\begin{theorem}
    $\cov(\mathcal{M}) = \non^*_\omega(\ed)$.
\end{theorem}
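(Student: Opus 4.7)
I would prove $\cov(\mathcal{M})\leq\non^*_\omega(\ed)$ and $\non^*_\omega(\ed)\leq\cov(\mathcal{M})$ as two separate inequalities, each of which reduces to a suitable characterization of $\cov(\mathcal{M})$.

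For $\cov(\mathcal{M})\leq\non^*_\omega(\ed)$, take $\mathcal{A}\subseteq[\omega\times\omega]^\omega$ with $|\mathcal{A}|<\cov(\mathcal{M})$; the goal is to exhibit a countable $(X_n)_{n<\omega}\subseteq\ed$ meeting every $A\in\mathcal{A}$ in an infinite set. Partition $\mathcal{A}=\mathcal{A}_I\cup\mathcal{A}_{II}$ according to whether $A$ has infinitely many nonempty columns. If $A\in\mathcal{A}_{II}$ then some column of $A$ is infinite, so the countable family $\{\{i\}\times\omega:i<\omega\}\subseteq\ed$ handles $\mathcal{A}_{II}$. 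For $A\in\mathcal{A}_I$ choose $f_A$ on $\pi_0(A)=\{i:A\cap(\{i\}\times\omega)\neq\emptyset\}$ with $(i,f_A(i))\in A$, and apply Theorem \ref{cov m} to the families $\{f_A:A\in\mathcal{A}_I\}\subseteq\omega^\omega$ and $\{\pi_0(A):A\in\mathcal{A}_I\}\subseteq[\omega]^\omega$, both of size $<\cov(\mathcal{M})$, obtaining $g\in\omega^\omega$ such that $\{i\in\pi_0(A):f_A(i)=g(i)\}$ is infinite for every $A\in\mathcal{A}_I$. Then $X:=\{(i,g(i)):i<\omega\}\in\ed$ (its columns have size one) and $A\cap X$ is infinite for every $A\in\mathcal{A}_I$.

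For the reverse inequality $\non^*_\omega(\ed)\leq\cov(\mathcal{M})$, I would invoke the slalom characterization of $\cov(\mathcal{M})$ (Bartoszy\'{n}ski--Judah): there is $F\subseteq\omega^\omega$ with $|F|=\cov(\mathcal{M})$ such that for every slalom $\psi$ with $|\psi(i)|\leq i$ eventually, some $f\in F$ satisfies $f(i)\notin\psi(i)$ for almost all $i$. Set $\mathcal{A}=\{\{(i,f(i)):i<\omega\}:f\in F\}$. Given $(X_n)_{n<\omega}\subseteq\ed$ with $|X_n(i)|\leq N_n$ for $i>N_n$, I merge the $X_n$ into a single slalom via diagonalization: choose $M:\omega\to\omega$ with $M(i)\to\infty$ and $\sum_{n\leq M(i)}N_n\leq i$ eventually, and define
\[
\psi(i)=\bigcup\{X_n(i):n\leq M(i),\ i>N_n\}.
\]
Then $|\psi(i)|\leq i$ eventually and $X_n(i)\subseteq\psi(i)$ once $M(i)\geq n$ and $i>N_n$. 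A function $f\in F$ with $f(i)\notin\psi(i)$ for almost all $i$ then satisfies $\{(i,f(i)):i<\omega\}\cap X_n=^*\emptyset$ for every $n$, as required.

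The main technical point is the slalom version of the characterization of $\cov(\mathcal{M})$, which is not stated verbatim in the paper but is standard. It can be cited from Bartoszy\'{n}ski--Judah, or derived from Theorem \ref{cov m} via a countable-merge argument (viewing the countably many target functions encoded by $\psi$ as slices of a single diagonal slalom). Once this characterization is available, both directions reduce to essentially routine diagonalizations.
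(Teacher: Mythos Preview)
Your inequality $\cov(\mathcal{M})\leq\non^*_\omega(\ed)$ is correct and proceeds exactly as in the paper: split the family $\mathcal{A}$ into sets with finitely many columns (handled by the countable family $\{\{i\}\times\omega:i<\omega\}\subseteq\ed$) and sets with infinitely many columns (handled by a single function $g$ obtained from Theorem~\ref{cov m}). Your write-up is in fact more careful than the paper's on this point.

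For the reverse inequality $\non^*_\omega(\ed)\leq\cov(\mathcal{M})$ your route is genuinely different from the paper's, and there is a gap. You take $\mathcal{A}$ to consist of graphs of functions from a family $F$ of size $\cov(\mathcal{M})$ that eventually escapes \emph{every} slalom of width $\leq i$. The existence of such an $F$ is exactly the inequality $\lambda_h\leq\cov(\mathcal{M})$ (in your notation). But Theorem~\ref{cov m} only yields the opposite inequality: if $|F|<\cov(\mathcal{M})$ then a single function $g$ (hence the width-$1$ slalom $\{g(n)\}$) meets every $f\in F$ infinitely often, so $\lambda_h\geq\cov(\mathcal{M})$. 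Your ``countable-merge'' suggestion cannot reverse this: a width-$h$ slalom decomposes into countably many functions $g_j$, and from a $\cov(\mathcal{M})$-witness $F$ you get for each $j$ some $f_j\in F$ eventually different from $g_j$, but you need \emph{one} $f$ simultaneously escaping all $g_j$ with a uniform threshold, which does not follow. So the slalom characterization you invoke is not the standard Bartoszy\'nski statement and is not a corollary of Theorem~\ref{cov m}; if it is true, it needs an independent argument or a precise citation.

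The paper avoids this issue entirely by proving the contrapositive: assuming $\kappa<\non^*_\omega(\ed)$, it shows $\kappa<\cov(\mathcal{M})$ directly via chopped reals. It first uses the auxiliary bound $\non^*_\omega(\ed)\leq\mathfrak{d}$ (Lemma~\ref{d-part}) to pass to a single interval partition $Q$ refining all the $P^\alpha$, then codes each $x^\alpha$ as a function $f_\alpha:\omega\to H(\omega)$ recording longer and longer tuples of $x^\alpha$-restrictions to $Q$-blocks, applies the defining property of $\non^*_\omega(\ed)$ to the family $\{f_\alpha\}$ to obtain countably many $g_n$, and finally builds by hand a single $y\in 2^\omega$ (diagonalizing over the $g_n$) that lies in every $Match(x^\alpha,P^\alpha)$. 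This is more laborious but self-contained with the tools set up in the section. If you can substantiate the slalom characterization, your argument would be considerably shorter; as written, that step is the missing idea.
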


\begin{proof}

    Fix $\kappa < \cov(\mathcal{M})$ and $\{A_\alpha : \alpha < \kappa\} \subseteq \ed$. 
Define 
\[
F=\{ f \in \omega^\omega : \exists\, \alpha \ (A_\alpha = f)\}.
\]
By Lemma \ref{cov m}, there is $g \in \omega^\omega$ such that for every $f \in F$ there are infinitely many $n$ with $f(n)=g(n)$.
Let 
\[
X=\{g\}\cup\bigl\{\{n\}\times\omega : n<\omega\bigr\}
\]
witness $\kappa<\non^*_\omega(\ed)$.

Now let $\kappa<\non^*_\omega(\ed)$ and let $\{(x^\alpha,P^\alpha): \alpha<\kappa\}$ be a family of chopped reals.  
Since $\kappa<\mathfrak{d}$, by Lemma \ref{d-part}, there is $Q\in \mathrm{IP}$ such that for each $\alpha<\kappa$ there exist infinitely many $n$ and some $m$ with $P^\alpha_m\subseteq Q_n$.  

Let
\[
B_\alpha=\{n : \exists\, m \, (P^\alpha_m \subseteq Q_n)\},
\]
and let $\{b_n^\alpha : n<\omega\}$ be its increasing enumeration.  
Define $f_\alpha:\omega\to H(\omega)$ by
\[
f_\alpha(n)=(x^\alpha\!\upharpoonright Q_{b_0^\alpha},\, x^\alpha\!\upharpoonright Q_{b_1^\alpha},\, \dots ,\, x^\alpha\!\upharpoonright Q_{b_{\frac{n(n+1)}2}^\alpha}).
\]

Since $\kappa<\non^*_\omega(\ed)$, there exist functions $\{g_n:n<\omega\}\subseteq \omega^{H(\omega)}$ such that for every $\alpha<\kappa$ there is some $n$ with infinitely many $m$ satisfying $g_n(m)=f_\alpha(m)$.  
Without loss of generality, for all $m,n$ we may write
\[
g_n(m)=( s^{n,m}_0,\dots,s^{n,m}_{\frac{m(m+1)}2}),
\]
where each $s^{n,m}_i$ is a function, with $dom(s^{n,m}_i)=Q_{k_i}$ for some $k_i$, and $i<j$ implies $k_i<k_j$.

Construct recursively a sequence $\{y_i:i<\omega\}\subseteq \mathrm{Fn}(\omega,2)$ ($y_i$ is a function with the domain of $y_i$ a finite set of $\omega$)  such that $y_i\subseteq y_j$ whenever $i<j$, and such that there is a set $F\in[Q]^{\frac{i(i+1)}2}$ with $dom(y_i)=\bigcup F$.

Start with $y_0=g_0(0)$. Assume $y_n$ has been constructed and $dom(y_n)=\bigcup F$ for some finite $F\subseteq Q$. For each $i<n+1$, note that $g_i(n+1)$ has length $\frac{(n+1)(n+2)}2$, so there exists a $k_i$ with 
\[
Q_{k_i}\notin F\cup\{Q_{k_j}:j<i\}
\]
and $Q_{k_i}=dom(s^{i,n+1}_{j_i})$ for some $j_i<\frac{(n+1)(n+2)}2$.  
Define
\[
y_{n+1}=y_n\cup\{s^{i,n+1}_{j_i}:i<n+1\}.
\]
Note that $|y_{n+1}|=\frac{n(n+1)}2+n+1$.

Finally, choose $y\in 2^\omega$ extending $\bigcup_{n<\omega} y_n$.  
Fix $\alpha<\kappa$. By Lemma \ref{chopped}, it is suffices to show that $y\in Match(x^\alpha,P^\alpha)$.  
Choose $n$ such that for infinitely many $k$ we have $f_\alpha(k)=g_n(k)$.  
For each $m\ge n$, if $f_\alpha(m)=g_n(m)$ then there is $i_m$ such that  
$s^{n,m}_{i_m}\in g_n(m)$ and
\[
x^\alpha\!\upharpoonright Q_{b^\alpha_{i_m}}
 = s^{n,m}_{i_m}\!\upharpoonright Q_{b^\alpha_{i_m}}
 \subseteq y_m.
\]
Hence
\[
x^\alpha\!\upharpoonright P^\alpha_{j_m}
= y\!\upharpoonright P^\alpha_{j_m},
\]
for some $j_m$ with $P^\alpha_{j_m}\subseteq Q_{b^\alpha_{i_m}}$.
    \end{proof}

Finally, in \cite{poke} is proved that $\add^*_\omega(\fin\times \fin) =\mathfrak{b}$. This motivates our last result.

\begin{proposition}
    $\non^*(\fin\times \fin)=\non^*_\omega(\fin\times \fin) =\mathfrak{d}$
\end{proposition}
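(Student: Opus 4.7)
The plan is to establish the chain $\non^*(\fin \times \fin) \leq \non^*_\omega(\fin \times \fin) \leq \mathfrak{d}$ together with the lower bound $\non^*(\fin \times \fin) \geq \mathfrak{d}$. The first inequality is immediate, since the requirement defining $\non^*_\omega$ is strictly stronger: a single $A\in \mathcal{A}$ must witness almost-disjointness simultaneously for every member of a countable sequence.

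For the upper bound $\non^*_\omega(\fin \times \fin) \leq \mathfrak{d}$, I would fix a dominating family $\{f_\alpha : \alpha<\mathfrak{d}\}\subseteq \omega^\omega$ of strictly increasing functions and take $\mathcal{A}$ to be the graphs $A_\alpha=\{(n,f_\alpha(n)):n<\omega\}$. Given $(X_k)_{k<\omega}\subseteq \fin\times\fin$, each $X_k$ has only finitely many infinite columns, so $g_k(n)=\max(X_k)_n+1$ (with the convention $g_k(n)=0$ on empty or infinite columns) is well-defined in $\omega^\omega$. The diagonal $h(n)=\max\{g_k(n):k\leq n\}$ dominates every $g_k$ eventually, and any $f_\alpha\geq^* h$ from the dominating family then dominates every $g_k$ as well. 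A column-by-column check gives $A_\alpha\cap X_k$ finite for each $k$, since $f_\alpha(n)>\max(X_k)_n$ holds outside a finite set once $n$ avoids the finitely many infinite columns of $X_k$.

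For the lower bound, suppose $|\mathcal{A}|=\kappa<\mathfrak{d}$; the aim is to produce $X\in \fin\times\fin$ with $|A\cap X|=\omega$ for every $A\in \mathcal{A}$, contradicting the witness property. For each $A\in \mathcal{A}$, set $S_A=\{n:(A)_n\neq \emptyset\}$ and $f_A(n)=\min(A)_n$ on $S_A$; enumerate $S_A$ increasingly as $\{n^A_k\}_{k<\omega}$ and write $\bar f_A(k)=f_A(n^A_k)$. Since $|\{\bar f_A : A\in \mathcal{A}\}|<\mathfrak{d}$, this family fails to be dominating in $\omega^\omega$, so there exists $g^*\in \omega^\omega$ with $g^*(k)>\bar f_A(k)$ for infinitely many $k$, simultaneously for every $A$. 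Define $g(n)=\max\{g^*(k):k\leq n\}$ and set $X=\{(n,m):m\leq g(n)\}\in \fin\times\fin$. For every $A\in\mathcal{A}$ and every $k$ witnessing the inequality, $g(n^A_k)\geq g^*(k)>f_A(n^A_k)$, so $(n^A_k,f_A(n^A_k))\in A\cap X$; the infinitude of such $k$'s supplies infinitely many distinct elements of $A\cap X$.

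The main subtlety is to align the enumerations $\{n^A_k\}_k$, which vary with $A$, with a single function $g$ that must work uniformly across $\mathcal{A}$. The monotone upper envelope trick resolves this: a large value of $g^*$ at position $k$ propagates to large values of $g$ at every position $n\geq k$, and in particular at $n^A_k\geq k$ regardless of $A$. The upper bound is comparatively routine, using only a standard diagonal construction against a countable family, and the inequality $\non^*(\fin \times \fin) \leq \non^*_\omega(\fin \times \fin)$ then yields the common value $\mathfrak{d}$.
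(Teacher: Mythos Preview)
Your overall strategy mirrors the paper's: establish $\non^*_\omega(\fin\times\fin)\le\mathfrak d$ via a dominating family of graphs, and $\non^*(\fin\times\fin)\ge\mathfrak d$ by showing that no family of size $<\mathfrak d$ can witness $\non^*$. The upper-bound argument is correct and a bit more explicit than the paper's contrapositive version.

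The lower bound, however, has a real gap. You implicitly assume that $S_A=\{n:(A)_n\neq\emptyset\}$ is infinite when you enumerate it as $\{n^A_k\}_{k<\omega}$. Nothing in the hypotheses guarantees this: $A=\{0\}\times\omega$ is a perfectly good infinite subset of $\omega\times\omega$ with $|S_A|=1$, and for such an $A$ your set $X=\{(n,m):m\le g(n)\}$ meets $A$ only finitely, so the construction fails. In fact this gap is fatal to the $\non^*$ part of the statement itself: the countable family $\mathcal A=\{\{n\}\times\omega:n<\omega\}$ is already a witness for $\non^*(\fin\times\fin)$, since every $X\in\fin\times\fin$ has all but finitely many columns finite and therefore meets some $\{n\}\times\omega$ in a finite set. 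Hence $\non^*(\fin\times\fin)=\aleph_0$, not $\mathfrak d$. The paper's own proof has exactly the same oversight (it assumes each $A_\alpha$ lies below the graph of some $f_\alpha\in\omega^\omega$, which is impossible when $A_\alpha$ has an infinite column). Only the equality $\non^*_\omega(\fin\times\fin)=\mathfrak d$ survives; to recover the missing inequality $\non^*_\omega\ge\mathfrak d$ one can first observe that any $\non^*_\omega$-witness $\mathcal A$ may be replaced by $\{A\in\mathcal A:\text{every column of }A\text{ is finite}\}$ (test against the sequence $(\{k\}\times\omega)_{k<\omega}$), after which your lower-bound argument goes through verbatim.
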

\begin{proof}
Take $\kappa<\non^*_\omega(\fin\times\fin)$ and $\{f_\alpha:\alpha<\kappa\}\subseteq \omega^\omega$. 
We need to show that this family is not dominating.  
Choose $\{g_n:n<\omega\}\subseteq \omega^\omega$ witnessing that $\{f_\alpha:\alpha<\kappa\}$ is not a witness for $\non^*_\omega(\fin\times\fin)$.  
Let $g$ dominate each $g_n$.  
Then, for every $\alpha<\kappa$, there are infinitely many $k$ such that $f_\alpha(k)\le g(k)$.
Thus $g$ is not dominated by any $f_\alpha$.

Now take $\kappa<\mathfrak{d}$.  
Since $\fin\times\fin$ is a tall ideal, it is enough to show that if $\mathcal{A}=\{A_\alpha:\alpha<\kappa\}\subseteq\fin\times\fin$, then there is a set $X$ such that for all $\alpha<\kappa$ we have $A_\alpha\cap X\neq^*\emptyset$.  
Let $\mathcal{A}$ be as above and define
\[
\mathcal{F}
=\bigl\{
f_\alpha:\alpha<\kappa \,\wedge\, 
A_\alpha\subseteq \{(n,m): m\le f_\alpha(n)\}
\bigr\}.
\]
Let $g$ be such that $g\nleq^* f_\alpha$ for all $\alpha<\kappa$.  
Define $X=\{(n,m): m\le g(n)\}$.
\end{proof}

%\addcontentsline{toc}{chapter}{Bibliography}
\nocite{*}
\bibliographystyle{amsplain}

\bibliography{References}

@preamble{"\def\polhk#1{\setbox0=\hbox{#1}{\ooalign{\hidewidth     \lower1.5ex\hbox{`}\hidewidth\crcr\unhbox0}}} "}

@incollection {ultra,
    AUTHOR = {Blass, Andreas},
     TITLE = {Ultrafilters and set theory},
 BOOKTITLE = {Ultrafilters across mathematics},
    SERIES = {Contemp. Math.},
    VOLUME = {530},
     PAGES = {49--71},
 PUBLISHER = {Amer. Math. Soc., Providence, RI},
      YEAR = {2010},
      ISBN = {978-0-8218-4833-3},
   MRCLASS = {03E05 (03C20 03E02 03E17 03E40 03E50 03E60 54D35)},
  MRNUMBER = {2757533},
MRREVIEWER = {Renling\ Jin},
       DOI = {10.1090/conm/530/10440},
       URL = {https://doi.org/10.1090/conm/530/10440},
}

@book{combinatorial,
    AUTHOR = {Halbeisen, Lorenz J.},
     TITLE = {Combinatorial set theory---with a gentle introduction to
              forcing},
    SERIES = {Springer Monographs in Mathematics},
   EDITION = {Third},
 PUBLISHER = {Springer, Cham},
      YEAR = {[2025] \copyright 2025},
     PAGES = {xvii+616},
      ISBN = {978-3-031-91751-6; 978-3-031-91752-3},
   MRCLASS = {03-02 (03E02 03E05 03E17 03E30 03E35 03E50)},
  MRNUMBER = {4917577},
       DOI = {10.1007/978-3-031-91752-3},
       URL = {https://doi.org/10.1007/978-3-031-91752-3},
}

@article { 4more,
    AUTHOR = {Farah, Ilijas and Zapletal, Jind\v{r}ich},
     TITLE = {Four and more},
   JOURNAL = {Ann. Pure Appl. Logic},
  FJOURNAL = {Annals of Pure and Applied Logic},
    VOLUME = {140},
      YEAR = {2006},
    NUMBER = {1-3},
     PAGES = {3--39},
      ISSN = {0168-0072,1873-2461},
   MRCLASS = {03E40 (28A05)},
  MRNUMBER = {2224046},
MRREVIEWER = {Juris\ Stepr\=ans},
       DOI = {10.1016/j.apal.2005.09.002},
       URL = {https://doi.org/10.1016/j.apal.2005.09.002},
}

@book{barto,
  title={Set Theory: on the structure of the real line},
  author={Bartoszy\'{n}ski, Tomek and Judah, Haim},
  year={1995},
  publisher={AK Peters/CRC Press}
}

@phdthesis{meza,
  title={Ideals and filters on countable sets},
  author={Meza Alc{\'a}ntara, David},
  year={UNAM, México, 2009},
  publisher={MX}
}

@article{sabok,
    AUTHOR = {Sabok, Marcin and Zapletal, Jind\v{r}ich},
     TITLE = {Forcing properties of ideals of closed sets},
   JOURNAL = {J. Symbolic Logic},
  FJOURNAL = {The Journal of Symbolic Logic},
    VOLUME = {76},
      YEAR = {2011},
    NUMBER = {3},
     PAGES = {1075--1095},
      ISSN = {0022-4812,1943-5886},
   MRCLASS = {03E40 (03E15 26A21 28A05 54H05)},
  MRNUMBER = {2849260},
MRREVIEWER = {J\"org\ D.\ Brendle},
       DOI = {10.2178/jsl/1309952535},
       URL = {https://doi.org/10.2178/jsl/1309952535},
}

@article{blaszczyk,
    AUTHOR = {B{\l}aszczyk, Aleksander and Shelah, Saharon},
     TITLE = {Regular subalgebras of complete {B}oolean algebras},
   JOURNAL = {J. Symbolic Logic},
  FJOURNAL = {The Journal of Symbolic Logic},
    VOLUME = {66},
      YEAR = {2001},
    NUMBER = {2},
     PAGES = {792--800},
      ISSN = {0022-4812,1943-5886},
   MRCLASS = {03E40 (03G05 06E05)},
  MRNUMBER = {1833478},
MRREVIEWER = {Judith\ Roitman},
       DOI = {10.2307/2695044},
       URL = {https://doi.org/10.2307/2695044},
}

@article{poke,
title={ON IDEALS RELATED TO LAVER AND MILLER TREES}, 
DOI={10.1017/jsl.2025.10130}, 
journal={The Journal of Symbolic Logic}, 
year={2025}, 
pages={1–19},
author={Cie{\'s}lak, Aleksander and Mart{\'\i}nez-Celis, Arturo} 
}

@article{katvetov,
    AUTHOR = {Hru\v{s}\'ak, Michael},
     TITLE = {Kat\v{e}tov order on {B}orel ideals},
   JOURNAL = {Arch. Math. Logic},
  FJOURNAL = {Archive for Mathematical Logic},
    VOLUME = {56},
      YEAR = {2017},
    NUMBER = {7-8},
     PAGES = {831--847},
      ISSN = {0933-5846,1432-0665},
   MRCLASS = {03E15 (03E05 03E17 03E35)},
  MRNUMBER = {3696069},
MRREVIEWER = {Diego\ Alejandro\ Mej\'ia},
       DOI = {10.1007/s00153-017-0543-x},
       URL = {https://doi.org/10.1007/s00153-017-0543-x},
}

@article{HMM,
    AUTHOR = {Hru\v{s}\'ak, Michael and Meza-Alc\'antara, David and Minami,
              Hiroaki},
     TITLE = {Pair-splitting, pair-reaping and cardinal invariants of
              {$F_\sigma$}-ideals},
   JOURNAL = {J. Symbolic Logic},
  FJOURNAL = {The Journal of Symbolic Logic},
    VOLUME = {75},
      YEAR = {2010},
    NUMBER = {2},
     PAGES = {661--677},
      ISSN = {0022-4812,1943-5886},
   MRCLASS = {03E15 (03E05 03E17 03E35)},
  MRNUMBER = {2648159},
MRREVIEWER = {T.\ Thrivikraman},
       DOI = {10.2178/jsl/1268917498},
       URL = {https://doi.org/10.2178/jsl/1268917498},
}

@article{hernandez,
    AUTHOR = {Hern\'andez-Hern\'andez, Fernando and Hru\v{s}\'ak, Michael},
     TITLE = {Cardinal invariants of analytic {$P$}-ideals},
   JOURNAL = {Canad. J. Math.},
  FJOURNAL = {Canadian Journal of Mathematics. Journal Canadien de
              Math\'ematiques},
    VOLUME = {59},
      YEAR = {2007},
    NUMBER = {3},
     PAGES = {575--595},
      ISSN = {0008-414X,1496-4279},
   MRCLASS = {03E17 (03E35 03E40)},
  MRNUMBER = {2319159},
MRREVIEWER = {J\"org\ D.\ Brendle},
       DOI = {10.4153/CJM-2007-024-8},
       URL = {https://doi.org/10.4153/CJM-2007-024-8},
}

@article{hrusak,
    AUTHOR = {Hru\v{s}\'ak, Michael},
     TITLE = {Combinatorics of filters and ideals},
 BOOKTITLE = {Set theory and its applications},
    SERIES = {Contemp. Math.},
    VOLUME = {533},
     PAGES = {29--69},
 PUBLISHER = {Amer. Math. Soc., Providence, RI},
      YEAR = {2011},
      ISBN = {978-0-8218-4812-8},
   MRCLASS = {03E15 (03E05 03E17 03E35)},
  MRNUMBER = {2777744},
MRREVIEWER = {Jan\ Kraszewski},
       DOI = {10.1090/conm/533/10503},
       URL = {https://doi.org/10.1090/conm/533/10503},
}

@article{semiselective,
    AUTHOR = {Farah, Ilijas},
     TITLE = {Semiselective coideals},
   JOURNAL = {Mathematika},
  FJOURNAL = {Mathematika. A Journal of Pure and Applied Mathematics},
    VOLUME = {45},
      YEAR = {1998},
    NUMBER = {1},
     PAGES = {79--103},
      ISSN = {0025-5793},
   MRCLASS = {03E05 (03E15)},
  MRNUMBER = {1644345},
MRREVIEWER = {Otmar\ Spinas},
       DOI = {10.1112/S0025579300014054},
       URL = {https://doi.org/10.1112/S0025579300014054},
}

@incollection{cccc,
  title={Combinatorial cardinal characteristics of the continuum},
  author={Blass, Andreas},
  booktitle={Handbook of set theory},
  pages={395--489},
  year={2009},
  publisher={Springer}
}

@article{prikry,
    AUTHOR = {Hru\v{s}\'ak, Michael and Minami, Hiroaki},
     TITLE = {Mathias-{P}rikry and {L}aver-{P}rikry type forcing},
   JOURNAL = {Ann. Pure Appl. Logic},
  FJOURNAL = {Annals of Pure and Applied Logic},
    VOLUME = {165},
      YEAR = {2014},
    NUMBER = {3},
     PAGES = {880--894},
      ISSN = {0168-0072,1873-2461},
   MRCLASS = {03E05 (03E17 03E40 03E50)},
  MRNUMBER = {3142391},
MRREVIEWER = {Masaru\ Kada},
       DOI = {10.1016/j.apal.2013.11.003},
       URL = {https://doi.org/10.1016/j.apal.2013.11.003},
}

@article {fernandez,
    AUTHOR = {Fern\'andez-Bret\'on, David Jos\'e},
     TITLE = {Stable ordered union ultrafilters and {${\rm cov}(\mathcal{M})<\germ c$}},
   JOURNAL = {J. Symb. Log.},
  FJOURNAL = {The Journal of Symbolic Logic},
    VOLUME = {84},
      YEAR = {2019},
    NUMBER = {3},
     PAGES = {1176--1193},
      ISSN = {0022-4812,1943-5886},
   MRCLASS = {03E35 (03E17 22A15 54D80)},
  MRNUMBER = {4010494},
MRREVIEWER = {Piotr\ Borodulin-Nadzieja},
       DOI = {10.1017/jsl.2019.20},
       URL = {https://doi.org/10.1017/jsl.2019.20},
}

@article{fullmil,
title = {Full-splitting Miller trees and infinitely often equal reals},
journal = {Annals of Pure and Applied Logic},
volume = {168},
number = {8},
pages = {1491-1506},
year = {2017},
issn = {0168-0072},
doi = {https://doi.org/10.1016/j.apal.2017.02.001},
url = {https://www.sciencedirect.com/science/article/pii/S0168007217300180},
author = {Yurii Khomskii and Giorgio Laguzzi},
keywords = {Descriptive set theory, Idealized forcing, Dichotomy theorems, Regularity properties},
}

@article {Keremedis,
    AUTHOR = {Keremedis, Kyriakos},
     TITLE = {On the covering and the additivity number of the real line},
   JOURNAL = {Proc. Amer. Math. Soc.},
  FJOURNAL = {Proceedings of the American Mathematical Society},
    VOLUME = {123},
      YEAR = {1995},
    NUMBER = {5},
     PAGES = {1583--1590},
      ISSN = {0002-9939,1088-6826},
    MRCLASS = {03E35 (03E05 03E40)},
  MRNUMBER = {1234629},
MRREVIEWER = {Judith\ Roitman},
       DOI = {10.2307/2161151},
       URL = {https://doi-org.pbidi.unam.mx:2443/10.2307/2161151}
}

@article {rationals,
    AUTHOR = {Balcar, B. and Hern\'andez-Hern\'andez, F. and Hru\v{s}\'ak,
              M.},
     TITLE = {Combinatorics of dense subsets of the rationals},
   JOURNAL = {Fund. Math.},
  FJOURNAL = {Fundamenta Mathematicae},
    VOLUME = {183},
      YEAR = {2004},
    NUMBER = {1},
     PAGES = {59--80},
      ISSN = {0016-2736,1730-6329},
   MRCLASS = {03E17 (03E35 06E15)},
  MRNUMBER = {2098150},
MRREVIEWER = {Andrzej\ Ros\l anowski},
       DOI = {10.4064/fm183-1-4},
       URL = {https://doi-org.pbidi.unam.mx:2443/10.4064/fm183-1-4},
}

@article {ioe,
    AUTHOR = {Spinas, Otmar},
     TITLE = {Perfect set theorems},
   JOURNAL = {Fund. Math.},
  FJOURNAL = {Fundamenta Mathematicae},
    VOLUME = {201},
      YEAR = {2008},
    NUMBER = {2},
     PAGES = {179--195},
      ISSN = {0016-2736,1730-6329},
   MRCLASS = {03E05 (03E15 03E17)},
  MRNUMBER = {2448419},
MRREVIEWER = {Arnold\ W.\ Miller},
       DOI = {10.4064/fm201-2-6},
       URL = {https://doi.org/10.4064/fm201-2-6},
}

@article {Brendle,
    AUTHOR = {Brendle, J\"org and Khomskii, Yurii and Wohofsky, Wolfgang},
     TITLE = {Cofinalities of {M}arczewski-like ideals},
   JOURNAL = {Colloq. Math.},
  FJOURNAL = {Colloquium Mathematicum},
    VOLUME = {150},
      YEAR = {2017},
    NUMBER = {2},
     PAGES = {269--279},
      ISSN = {0010-1354,1730-6302},
   MRCLASS = {03E17 (03E40 03E50)},
  MRNUMBER = {3719461},
MRREVIEWER = {Masaru\ Kada},
       DOI = {10.4064/cm7113-5-2017},
       URL = {https://doi.org/10.4064/cm7113-5-2017},
}

@article {hc,
    AUTHOR = {Zapletal, Jind\v{r}ich},
     TITLE = {Dimension theory and forcing},
   JOURNAL = {Topology Appl.},
  FJOURNAL = {Topology and its Applications},
    VOLUME = {167},
      YEAR = {2014},
     PAGES = {31--35},
      ISSN = {0166-8641,1879-3207},
   MRCLASS = {03E40 (54F45)},
  MRNUMBER = {3193422},
MRREVIEWER = {Masaru\ Kada},
       DOI = {10.1016/j.topol.2014.03.004},
       URL = {https://doi.org/10.1016/j.topol.2014.03.004},
}

@article {khomskii,
    AUTHOR = {Khomskii, Yurii},
     TITLE = {Filter-{L}aver measurability},
   JOURNAL = {Topology Appl.},
  FJOURNAL = {Topology and its Applications},
    VOLUME = {228},
      YEAR = {2017},
     PAGES = {208--221},
      ISSN = {0166-8641,1879-3207},
   MRCLASS = {03E05 (03E15 03E40)},
  MRNUMBER = {3679083},
MRREVIEWER = {Giorgio\ Laguzzi},
       DOI = {10.1016/j.topol.2017.06.004},
       URL = {https://doi.org/10.1016/j.topol.2017.06.004},
}

@article {talagrand,
    AUTHOR = {Talagrand, Michel},
     TITLE = {Compacts de fonctions mesurables et filtres non mesurables},
   JOURNAL = {Studia Math.},
  FJOURNAL = {Polska Akademia Nauk. Instytut Matematyczny. Studia
              Mathematica},
    VOLUME = {67},
      YEAR = {1980},
    NUMBER = {1},
     PAGES = {13--43},
      ISSN = {0039-3223,1730-6337},
   MRCLASS = {28A20 (46E27 54C50)},
  MRNUMBER = {579439},
MRREVIEWER = {J.\ D.\ Knowles},
       DOI = {10.4064/sm-67-1-13-43},
       URL = {https://doi.org/10.4064/sm-67-1-13-43}
}

@misc{laverultrafilters,
      title={Laver ultrafilters}, 
      author={Silvan Horvath and Tan Özalp},
      year={2026},
      eprint={2602.01478},
      archivePrefix={arXiv},
      primaryClass={math.LO},
      url={https://arxiv.org/abs/2602.01478}, 
}
%\printbibliography
\end{document}